\documentclass[12pt,reqno]{amsart}

\usepackage[usenames, dvipsnames, svgnames]{xcolor}
\usepackage{amsmath, amssymb,graphicx,amsthm,latexsym, amsfonts, enumitem, mathtools, tensor, MnSymbol}
\usepackage{hyperref}
\usepackage[all, color]{xy}
\usepackage{color}
\usepackage{amssymb}
%
\usepackage{float}
\usepackage{tikz}
\usetikzlibrary{arrows,decorations.pathmorphing,decorations.pathreplacing,positioning,shapes.geometric,shapes.misc,decorations.markings,decorations.fractals,calc,patterns}

\DeclareMathOperator{\Hom}{Hom}
\DeclareMathOperator{\End}{End}

\DeclareMathOperator{\im}{Im}
\DeclareMathOperator{\Ker}{Ker}
\DeclareMathOperator{\Coker}{Coker}
\DeclareMathOperator{\Mod}{Mod}
\DeclareMathOperator{\mmod}{mod}

\DeclareMathOperator{\Ext}{Ext}

\theoremstyle{plain}
\newtheorem{theorem}{Theorem}[section]
\newtheorem*{theorem*}{Theorem}

\theoremstyle{definition}
\newtheorem{defn}[theorem]{Definition}

\newtheorem{exmp}[theorem]{Example} 
\newtheorem{remark}[theorem]{Remark}

\newtheorem{lemma}[theorem]{Lemma}
\newtheorem{notation}[theorem]{Notation}

\newtheorem{setup}[theorem]{Setup}
\newtheorem{proposition}[theorem]{Proposition}

\setlength{\textwidth}{165mm}
\setlength{\textheight}{237mm}
\addtolength{\oddsidemargin}{-1.7cm}
\addtolength{\evensidemargin}{-1.7cm}
\addtolength{\topmargin}{-12mm}

\raggedbottom

\date{}
\begin{document}
\setlength{\parindent}{0pt}
\setlength{\parskip}{7pt}
\title[Grothendieck groups of triangulated categories]{Grothendieck groups of triangulated categories via cluster tilting subcategories}
\author{Francesca Fedele}
\address{School of Mathematics, Statistics and Physics,
Newcastle University, Newcastle upon Tyne NE1 7RU, United Kingdom}
\email{F.Fedele2@newcastle.ac.uk}
\keywords{Grothendieck group, higher-angulated category, index, $m$-cluster tilting subcategory.}
\subjclass[2010]{16E20, 18E30.}
\maketitle
\begin{abstract}
    Let $k$ be a field and $\mathcal{C}$ a $k$-linear, Hom-finite triangulated category with split idempotents. In this paper, we show that under suitable circumstances, the Grothendieck group of $\mathcal{C}$, denoted $K_0(\mathcal{C})$, can be expressed as a quotient of the split Grothendieck group of a higher cluster tilting subcategory of $\mathcal{C}$. The results we  prove are higher versions of results on Grothendieck groups of triangulated categories by Xiao and Zhu and by Palu.
   Assume that $n\geq 2$ is an integer, $\mathcal{C}$ has a Serre functor $\mathbb{S}$ and an $n$-cluster tilting subcategory $\mathcal{T}$ such that Ind$\,\mathcal{T}$ is locally bounded. Then, for every indecomposable $M$ in $\mathcal{T}$, there is an Auslander--Reiten $(n+2)$-angle in $\mathcal{T}$ of the form $\mathbb{S}\Sigma^{-n}(M)\rightarrow T_{n-1}\rightarrow\dots\rightarrow T_0\rightarrow M$ and
    \begin{align*}
        K_0(\mathcal{C})\cong K_0^{sp}(\mathcal{T})\Big/\Big\langle -[M]+(-1)^n[\mathbb{S}\Sigma^{-n}(M)]+\sum_{i=0}^{n-1}(-1)^i[T_i]\Big\mid M\in\text{Ind }\mathcal{T} \Big\rangle.
    \end{align*}
    Assume now that $d$ is a positive integer and $\mathcal{C}$ has a $d$-cluster tilting subcategory $\mathcal{S}$ closed under $d$-suspension. Then $\mathcal{S}$ is a so called $(d+2)$-angulated category whose Grothendieck group $K_0(\mathcal{S})$ can be defined as a certain quotient of $K_0^{sp}(\mathcal{S})$. We will show
    \begin{align*}
        K_0(\mathcal{C})\cong K_0(\mathcal{S}).
    \end{align*}
Moreover, assume that $n=2d$, that all the above assumptions hold, and that $\mathcal{T}\subseteq \mathcal{S}$. Then our results can be combined to express $K_0(\mathcal{S})$ as a quotient of $K_0^{sp}(\mathcal{T})$.
\end{abstract}

\section{Introduction}

Let $k$ be a field and $\mathcal{C}$ be a $k$-linear, $\Hom$-finite triangulated category with split idempotents and suspension functor $\Sigma$.  We denote the split Grothendieck group of an additive category $\mathcal{A}$ by $K_0^{sp}(\mathcal{A})$ and the Grothendieck group of an abelian or triangulated category $\mathcal{B}$ by $K_0(\mathcal{B})$.

We first present two previous results, one by Xiao and Zhu and the other one by Palu that in some sense are the base case of our results. Note that both Xiao and Zhu and Palu assume that $k$ is an algebraically closed field. However, this assumption is not needed for our higher versions and $k$ is a general field in our setup.

{\bf Previous results.}
Xiao and Zhu presented triangulated analogues of results of Auslander \cite[Theorems 2.2 and 2.3]{AM} and Butler \cite[Theorem in introduction]{BMCR}. In particulat, they proved in \cite[Theorem 2.1]{XZ}, that if $\mathcal{C}$ is a triangulated category of finite type, then
\begin{align*}
    K_0(\mathcal{C})\cong K_0^{sp}(\mathcal{C})/\langle -[A]+[B]-[C]\mid C\in\rm{Ind }\,\mathcal{C} \text{ with Auslander--Reiten triangle } \it{A\rightarrow B\rightarrow C} \rangle.
\end{align*}

We can think of the $\mathcal{C}$ appearing on the right side as the only possible $1$-cluster tilting subcategory of $\mathcal{C}$. In this paper, we are interested in higher cluster tilting subcategories.

The first higher case occurs when $\mathcal{C}$ has a ($2$-)cluster tilting subcategory. Palu studied this case, in a more specific setup, in \cite{PY}. In fact, Palu assumes that $\mathcal{C}$ is the stable category of a Frobenius $k$-linear category with split idempotents, and that $\mathcal{C}$ is $2$-Calabi--Yau with a ($2$-)cluster tilting subcategory $\mathcal{T}$.

Given an indecomposable $M$ in $\mathcal{T}$, let $\widetilde{\mathcal{T}}$ be the additive subcategory of $\mathcal{T}$ whose indecomposables are the same as $\mathcal{T}$, excluding those isomorphic to $M$.
Then, up to isomorphism, there is a unique indecomposable $M^*\not\in\mathcal{T}$ such that $\text{add}(\widetilde{\mathcal{T}}\cup M^*)\subseteq \mathcal{C}$ is ($2$-)cluster tilting. Moreover, $M$ and $M^*$ appear in two triangles with certain properties, called \textit{exchange triangles}, of the form:
\begin{align*}
    M^*\rightarrow B_M\rightarrow M\rightarrow \Sigma M^* \text{ and }
    M\rightarrow B_{M^*}\rightarrow M^*\rightarrow \Sigma M, 
\end{align*}
where $B_M$ and $B_{M^*}$ are in $\widetilde{\mathcal{T}}$. Palu proved in \cite[Theorem 10]{PY} that 
\begin{align*}
K_0(\mathcal{C})\cong K_0^{sp}(\mathcal{T})/\langle [B_{M^*}]-[B_M] \rangle_{M}.
\end{align*}

Note that if the Auslander--Reiten quiver of $\mathcal{T}$ has no loops, then the indecomposable $M\in \mathcal{T}$ has Auslander--Reiten $4$-angle $M\rightarrow B_{M^*}\rightarrow B_M\rightarrow M$ in the sense of \cite[Theorem 3.8]{IY}. So Palu's theorem is a higher version of Xiao and Zhu's theorem.

We present ``higher cluster tilting'' versions of Xiao and Zhu and Palu's results. Moreover, we present a ``higher angulated'' version of Palu's result.

{\bf Higher cluster tilting versions.} Let $n\geq 2$ be an integer and assume that $\mathcal{C}$ has a Serre functor $\mathbb{S}$ and an $n$-cluster tilting subcategory $\mathcal{T}$, see Definition \ref{defn_m-ct}. Let Ind$\,\mathcal{T}$ be a full subcategory of $\mathcal{T}$ containing precisely one object from each isomorphism class of indecomposables in $\mathcal{T}$ and assume that Ind$\,\mathcal{T}$ is locally bounded, see Definition \ref{defn_lb}. Recall that the functor $\mathbb{S}_n:=\mathbb{S}\Sigma^{-n}$ and Auslander--Reiten $(n+2)$-angles in $\mathcal{T}$ were introduced in \cite[Section 3]{IY}.

{\bf Theorem A.}
{\em We have that $K_0(\mathcal{C})$ is isomorphic to 
\begin{align*}
K_0^{sp}(\mathcal{T})\Bigg/\Bigg\langle \xymatrix{ {\begin{matrix} -[M]+(-1)^n[\mathbb{S}_n(M)]+\\\sum_{i=0}^{n-1} (-1)^i [T_i]\end{matrix}}\,\,\,\Bigg\vert\,\,\, {\begin{matrix}M\in\rm{Ind }\,\mathcal{T} \text{ with Auslander--Reiten } \text{$(n+2)$-angle }\\
    \mathbb{S}_n(M)\rightarrow T_{n-1}\rightarrow\dots\rightarrow T_0\rightarrow M\rightarrow \mathbb{S}(M)\end{matrix}}
   }\Bigg\rangle.
\end{align*}
}

The arguments we use to prove Theorem A rely on $n\geq 2$. However, note that when $k$ is an algebraically closed field, the case $n=1$ is still true and it is an instance of Xiao and Zhu's theorem.

If we add certain extra assumptions, we obtain the following because $\mathbb{S}_n\cong id$.

{\bf Corollary B.}
{\em Assume that $n\geq 2$ is an even integer and $\mathcal{C}$ is $n$-Calabi--Yau. Then
\begin{align*}
K_0(\mathcal{C})\cong K_0^{sp}(\mathcal{T})\Bigg/\Bigg\langle \xymatrix{ \displaystyle\sum_{i=0}^{n-1} (-1)^i [T_i]\,\,\,\Bigg\vert\,\,\, {\begin{matrix}M\in\rm{Ind  }\,\mathcal{T} \text{ with Auslander--Reiten } \text{$(n+2)$-angle }\\
    M\rightarrow T_{n-1}\rightarrow\dots\rightarrow T_0\rightarrow M\rightarrow \Sigma^{n}M\end{matrix}}
   }\Bigg\rangle.
\end{align*}
}

When $n=2$ and the Auslander--Reiten quiver of $\mathcal{T}$ has no loops, then Corollary B and Palu's theorem coincide.

{\bf Higher angulated version.} Let $d\geq 1$ be an integer and assume that $\mathcal{C}$ has a $d$-cluster tilting subcategory $\mathcal{S}$ such that $\Sigma^d\mathcal{S}=\mathcal{S}$. Note that $\mathcal{S}$ is a $(d+2)$-angulated category with $d$-suspension $\Sigma^d$, by \cite[Theorem 1]{GKO}.
Similarly to the way $K_0(\mathcal{C})$ is defined, one can define the Grothendieck group of the $(d+2)$-angulated category $\mathcal{S}$ as
\begin{align*}
    K_0(\mathcal{S}):=K_0^{sp}(\mathcal{S})\Bigg/ \Bigg\langle \sum_{i=0}^{d+1} (-1)^i [S_i]\,\,\Bigg\vert\,\, S_{d+1}\rightarrow\dots\rightarrow S_0\rightarrow\Sigma^d S_{d+1} \text{ is a $(d+2)$-angle in } \mathcal{S} \Bigg\rangle,
\end{align*}
see \cite[Definition 2.1]{BT}.
We prove that this is isomorphic to the Grothendieck group of $\mathcal{C}$.

{\bf Theorem C.}
{\em We have that $K_0(\mathcal{C})\cong K_0(\mathcal{S})$.
}

Let $n=2d$. We now add the assumptions that $\mathcal{C}$ has a Serre functor $\mathbb{S}$ and that there is an $n$-cluster tilting subcategory $\mathcal{T}\subseteq\mathcal{C}$
such that $\mathcal{T}\subseteq\mathcal{S}$ and Ind$\,\mathcal{T}$ is locally bounded. By \cite[Theorem 5.26]{OT}, we have that $\mathcal{T}\subseteq\mathcal{S}$ is an Oppermann--Thomas cluster tilting subcategory, \textbf{i.e.} the corresponding concept in a $(d+2)$-angulated category of a cluster tilting subcategory in a triangulated category.
Theorems A and C have the following immediate consequence.

{\bf Theorem D.}
{\em
We have that $K_0(\mathcal{C})\cong K_0(\mathcal{S})$ and
\begin{align*}
    K_0(\mathcal{S})\cong K_0^{sp}(\mathcal{T})\Bigg/\Bigg\langle \xymatrix{ {\begin{matrix} -[M]+[\mathbb{S}_n(M)]+\\\sum_{i=0}^{n-1} (-1)^i [T_i]\end{matrix}}\,\,\,\Bigg\vert\,\,\, {\begin{matrix}M\in\rm{Ind }\,\mathcal{T} \text{ with Auslander--Reiten } \text{$(n+2)$-angle }\\
    \mathbb{S}_n(M)\rightarrow T_{n-1}\rightarrow\dots\rightarrow T_0\rightarrow M\rightarrow \mathbb{S}(M)\end{matrix}}
   }\Bigg\rangle.
\end{align*}
}


{\bf Corollary E.}
If $\mathcal{C}$ is $n$-Calabi--Yau, then
\begin{align*}
    K_0(\mathcal{S})\cong K_0^{sp}(\mathcal{T})\Bigg/\Bigg\langle \xymatrix{ \displaystyle\sum_{i=0}^{n-1} (-1)^i [T_i]\,\,\,\Bigg\vert\,\,\, {\begin{matrix}M\in\rm{Ind }\,\mathcal{T} \text{ with Auslander--Reiten } \text{$(n+2)$-angle }\\
    M\rightarrow T_{n-1}\rightarrow\dots\rightarrow T_0\rightarrow M\rightarrow \Sigma^{n}M\end{matrix}}
   }\Bigg\rangle.
\end{align*}

When $d=1$, we have that $\mathcal{S}=\mathcal{C}$ is a triangulated category with ($2$-)cluster tilting subcategory $\mathcal{T}$ and, adding the extra assumption that the Auslander--Reiten quiver of $\mathcal{T}$ has no loops, Corollary E becomes Palu's theorem. For higher values of $d$, Corollary E proves a higher angulated version of Palu's theorem.

We conclude our paper by illustrating our results in two examples: one for each of the higher versions of Palu's theorem.
Let $q$ and $p$ be integers and $q$ be odd.
Consider the triangulated $q$-cluster category of Dynkin type $A_p$, denoted $\mathcal{C}_q(A_p)$, and note that this is a $(q+1)$-Calabi--Yau category. Since $q+1$ is even by assumption, we can apply Corollary B to show that
\begin{align*}
    K_0(\mathcal{C}_q(A_p))\cong
    \begin{cases}
    0, &\text{if $p$ is even,}\\
    \mathbb{Z}, & \text{if $p$ is odd.}
    \end{cases}
\end{align*}

We then consider a higher Auslander algebra $A^2_3$ of Dynkin type $A$ and its Amiot cluster category $\mathcal{C}^{4}(A^2_3)$, to find an example of categories
\begin{align*}
    \mathcal{T}\subseteq \mathcal{O}({A_3^2})\subseteq \mathcal{C}^{4}(A^2_3),
\end{align*}
such that $\mathcal{C}^{4}(A^2_3)$ is triangulated and $4$-Calabi--Yau, $\mathcal{O}({A_3^2})$ is closed under $\Sigma^2$ and $2$-cluster tilting in $\mathcal{C}^{4}(A^2_3)$ and $\mathcal{T}$ is $4$-cluster tilting in $\mathcal{C}^{4}(A^2_3)$. Applying Theorem C and Corollary E to this example, we find that
\begin{align*}
    K_0 (\mathcal{C}^{4}(A^2_3))\cong \mathbb{Z}\oplus\mathbb{Z}.
\end{align*}

The paper is organised as follows. Section \ref{section_setup} recalls some definitions and results and presents our setup. Section \ref{section_Gr_maps} introduces some morphisms between Grothendieck groups that will be useful in the rest of the paper. Section \ref{section_thm_T} proves Theorem A. Section \ref{section_thm_S} proves Theorem C. Section \ref{section_coro} presents Corollary E. Finally, Sections \ref{section_eg1} and \ref{section_eg2} illustrate our two examples.
\section{Setup and definitions}\label{section_setup}

\begin{defn}
Let $\mathcal{A}$ be an essentially small additive category and $G(\mathcal{A})$ be the free abelian group on isomorphism classes $[A]$ of objects $A\in\mathcal{A}$. We define the \textit{split Grothendieck group of $\mathcal{A}$} to be
\begin{align*}
    K_0^{sp}(\mathcal{A}):= G(\mathcal{A})/\langle [A\oplus B]-[A]-[B] \rangle.
\end{align*}
When $\mathcal{A}$ is abelian or triangulated, we can also define the \textit{Grothendieck group of $\mathcal{A}$} as
\begin{align*}
    K_0(\mathcal{A}):=& K_0^{sp}(\mathcal{A})/\langle [A]-[B]+[C]\mid 0\rightarrow A\rightarrow B\rightarrow C\rightarrow 0 \text{ is a short exact sequence in } \mathcal{A} \rangle \text{ or}\\
    K_0(\mathcal{A}):=& K_0^{sp}(\mathcal{A})/\langle [A]-[B]+[C]\mid A\rightarrow B\rightarrow C\rightarrow \Sigma A \text{ is a triangle in } \mathcal{A} \rangle,
\end{align*}
 respectively.
\end{defn}
In a similar way, one can define the Grothendieck group of a $(d+2)$-angulated category $\mathcal{S}$ as follows. 
\begin{defn}\label{defn_GG_S}
Let $d$ be a positive integer. The \textit{Grothendieck group of a $(d+2)$-angulated category $\mathcal{S}$} with $d$-suspension functor $\Sigma^d$ is defined to be
\begin{align*}
    K_0(\mathcal{S}):=K_0^{sp}(\mathcal{S})\Bigg/ \Bigg\langle \sum_{i=0}^{d+1} (-1)^i [S_i]\mid S_{d+1}\rightarrow\dots\rightarrow S_0\rightarrow\Sigma^d S_{d+1} \text{ is a $(d+2)$-angle in } \mathcal{S} \Bigg\rangle.
\end{align*}
\end{defn}
\begin{remark}
The definition of the Grothendieck group of a $(d+2)$-angulated category $\mathcal{S}$ we just introduced is different from the original one, see \cite[Definition 2.1]{BT}, however the two definitions are equivalent. In fact, since we define $K_0(\mathcal{S})$ as a quotient of $K_0^{sp}(\mathcal{S})$, the relation $[0]=0$ holds. On the other hand, in \cite{BT}, $K_0(\mathcal{S})$ is defined as a quotient of the free abelian group on the set of isomorphism classes of objects in $\mathcal{S}$ and the relation $[0]=0$ has to be manually added when $d$ is even.
\end{remark}

\begin{defn}
Let $\mathcal{U}\subseteq\mathcal{C}$ be a full subcategory. A $\mathcal{U}$\textit{-precover} of $C\in\mathcal{C}$ is a morphism of the form $\mu :U\rightarrow C$ with $U\in \mathcal{U}$ such that every morphism $\mu':U'\rightarrow C$ with $U'\in \mathcal{U}$ factorizes as:
\begin{align*}
\xymatrix{
U'\ar[rr]^{\mu'} \ar@{-->}[dr]_{\exists}& & C.\\
& U \ar[ru]_{\mu} &
}
\end{align*}
A $\mathcal{U}$\textit{-cover} of $C$ is a $\mathcal{U}$-precover $\mu: U\rightarrow C$ of $C$ which is also a right minimal morphism, that is if $\varphi: U\rightarrow U$ is a morphism such that $\mu\circ\varphi=\mu$, then $\varphi$ is an isomorphism.
The dual notions of precovers and covers are \textit{preenvelopes} and \textit{envelopes} respectively.

The subcategory $\mathcal{U}$ of $\mathcal{C}$ is called \textit{precovering} if every object in $\mathcal{C}$ has a $\mathcal{U}$-precover. Dually, it is called \textit{preenveloping} if every object in $\mathcal{C}$ has a $\mathcal{U}$-preenvelope. If $\mathcal{U}$ is both precovering and preenveloping, it is called \textit{functorially finite} in $\mathcal{C}$.
\end{defn}
\begin{defn}[{\cite[Section 3]{IY}}]\label{defn_m-ct}
For $m\geq 1$ an integer, an \textit{$m$-cluster tilting subcategory of $\mathcal{C}$} is a functorially finite, full subcategory $\mathcal{U}$ of $\mathcal{C}$ satisfying
\begin{align*}
    \mathcal{U}=\{ C\in\mathcal{C}\mid \Ext^{1\dots m-1}_{\mathcal{C}}(\mathcal{U},C)=0 \}=\{ C\in\mathcal{C}\mid \Ext^{1\dots m-1}_{\mathcal{C}}(C,\mathcal{U})=0 \}.
\end{align*}
\end{defn}
\begin{remark}
Note that when $m=1$ in Definition \ref{defn_m-ct}, the conditions $\Ext^{1\dots m-1}_{\mathcal{C}}(\mathcal{U},C)=0$ and $\Ext^{1\dots m-1}_{\mathcal{C}}(C,\mathcal{U})=0$ are empty and $\mathcal{C}=\mathcal{U}$ is the only possible $1$-cluster tilting subcategory.
\end{remark}

\begin{setup}
Let $m\geq 2$ be an integer and $\mathcal{U}$ an $m$-cluster tilting subcategory of $\mathcal{C}$.
\end{setup}

\begin{defn}[{\cite[Definition 2.9]{IY}}]
A \textit{$\mathcal{U}$-module} is a contravariant $k$-linear functor of the form $G:\mathcal{U}\rightarrow\Mod k$. Then $\mathcal{U}$-modules form an abelian category denoted  $\Mod\mathcal{U}$. We say that $G\in\Mod\mathcal{U}$ is \textit{coherent} if there exists an exact sequence of the form
\begin{align*}
    \mathcal{U}(-,U_1)\rightarrow\mathcal{U}(-,U_0)\rightarrow G(-)\rightarrow 0, 
\end{align*}
for some $U_1,\,U_0\in\mathcal{U}$. We denote by $\mmod\mathcal{U}$ the full subcategory of $\Mod\mathcal{U}$ consisting of coherent $\mathcal{U}$-modules.
\end{defn}

\begin{defn}
There is a homological functor
\begin{align*}
    F_{\mathcal{U}}:\mathcal{C}\rightarrow\mmod\mathcal{U},\,\,\, C\mapsto\mathcal{C}(-,C)|_\mathcal{U}.
\end{align*}
\end{defn}
\begin{remark}
Note that a priori the target of $F_\mathcal{U}$ should be $\Mod\mathcal{U}$. However, since $\mathcal{C}=\mathcal{U}*\Sigma\mathcal{U}*\dots*\Sigma^{m-1}\mathcal{U}$ by \cite[Theorem 3.1]{IY}, any object $C\in\mathcal{C}$ appears in a triangle of the form
\begin{align*}
    \Sigma^{-1}B\rightarrow A\rightarrow C\rightarrow B,
\end{align*}
where $A\in\mathcal{U}*\Sigma\mathcal{U}$ and $B\in\Sigma^2\mathcal{U}*\dots*\Sigma^{m-1}\mathcal{U}$. Applying $F_\mathcal{U}$ to this triangle, we obtain the exact sequence
\begin{align*}
    0\rightarrow F_{\mathcal{U}}(A)\xrightarrow{\cong} F_\mathcal{U}(C)\rightarrow 0,
\end{align*}
where $F_{\mathcal{U}}(A)\in\mmod\mathcal{U}$ by \cite[Proposition 6.2(3)]{IY} and so $F_{\mathcal{U}}(C)\in \mmod\mathcal{U}$.
\end{remark}

\begin{defn}
If $\mathcal{A}$ and $\mathcal{B}$ are full subcategories of $\mathcal{C}$, then
\begin{align*}
    \mathcal{A}*\mathcal{B}=\{ C\in\mathcal{C}\mid \text{there is a triangle } A\rightarrow C\rightarrow B\rightarrow \Sigma A \text{ with } A\in\mathcal{A},\, B\in\mathcal{B} \}.
\end{align*}
\end{defn}

We will often use towers of triangles, as defined below. These were first introduced in \cite[Notation 3.2]{IY}, see also {\cite[Definition 3.1]{JT}}.
\begin{defn}
A \textit{tower of triangles in $\mathcal{C}$} is a diagram of the form
\begin{align*}
\begin{gathered}
\xymatrix@!0 @C=3em @R=3em{
& C_{l-1}\ar[rd]\ar[rr] && C_{l-2}\ar[rd]\ar[r]&&\cdots&\ar[r]& C_2\ar[rr]\ar[rd]&&C_1\ar[rd]
\\
C_l\ar[ru]&& X_{l-2}\ar@{~>}[ll]\ar[ru]&& X_{l-3}\ar@{~>}[ll]&\cdots& X_2\ar[ru]&& X_1 \ar@{~>}[ll]\ar[ru]&& C_0,\ar@{~>}[ll]
}
\end{gathered}
\end{align*}
where $l\geq 1$ is an integer, a wavy arrow $\xymatrix{X\ar@{~>}[r]&Y}$ signifies a morphism $X\rightarrow \Sigma Y$, each oriented triangle is a triangle in $\mathcal{C}$ and each non-oriented triangle is commutative.
\end{defn}

\begin{defn}[{\cite[Definition 3.3]{JT}}]\label{defn_index}
By \cite[Corollary 3.3]{IY}, for $C\in\mathcal{C}$ there is a tower of triangles in $\mathcal{C}$ of the form
\begin{align*}
\begin{gathered}
\xymatrix@!0 @C=3em @R=3em{
& U_{m-2}\ar[rd]^{\mu_{m-2}}\ar[rr] && U_{m-3}\ar[rd]^{\mu_{m-3}}\ar[r]&&\cdots&\ar[r]& U_1\ar[rr]\ar[rd]^{\mu_{1}}&&U_0 \ar[rd]^{\mu_{0}}
\\
U_{m-1}\ar[ru]&& X_{m-2}\ar@{~>}[ll]\ar[ru]&& X_{m-3}\ar@{~>}[ll]&\cdots& X_2\ar[ru]&& X_1 \ar@{~>}[ll]\ar[ru]&& C,\ar@{~>}[ll]
}
\end{gathered}
\end{align*}
where  $U_i\in\mathcal{U}$ and $\mu_i$ is a $\mathcal{U}$-cover for each $i$. In particular, the $U_i$ are determined up to isomorphism. The \textit{index of $C$ with respect to $\mathcal{U}$} is the following element of the Grothendieck group $K_0^{sp}(\mathcal{U})$:
\begin{align*}
    \text{index}_{\mathcal{U}}(C)=\sum_{i=0}^{m-1}(-1)^i[U_i].
\end{align*}
\end{defn}

\begin{remark}
Note that $\text{index}_{\mathcal{U}}:\text{Obj}(\mathcal{C})\rightarrow K_0^{sp}(\mathcal{U})$ induces a homomorphism $K_0^{sp}(\mathcal{C})\rightarrow K_0^{sp}(\mathcal{U})$ which we also denote by $\text{index}_{\mathcal{U}}$.
\end{remark}
 
\begin{remark}\label{rmk_gen_version}
Note that we work in a more general setup than \cite{JT}, as we dropped the assumption that $\mathcal{U}=\text{add }U$ for some $U\in\mathcal{C}$, or in other words that $\mathcal{U}$ has finitely many indecomposables up to isomorphism. The arguments from \cite{JT} can be easily adjusted in this more general setup and the main results still hold. In particular, we state the corresponding definition of the homomorphism $\theta$ from \cite[Definition 4.1]{JT} and \cite[Theorem 4.5]{JT} in our setup.
\end{remark} 
\begin{defn}\label{defn_theta_U}
There is a homomorphism 
\begin{align*}
    \theta_{\mathcal{U}}: K_0(\mmod \mathcal{U})\rightarrow K_0^{sp}(\mathcal{U}),
\end{align*}
defined by
\begin{align*}
    \theta_{\mathcal{U}}([F_{\mathcal{U}}N])=\text{index}_{\mathcal{U}}(\Sigma^{-1}N)+\text{index}_{\mathcal{U}}(N)
\end{align*}
for $N\in\mathcal{U}*\Sigma \mathcal{U}$.
\end{defn}
\begin{remark}
Note that the fact that $\theta_{\mathcal{U}}$ from Definition \ref{defn_theta_U} is well-defined can be proved using the equivalence of categories $(\mathcal{U}*\Sigma \mathcal{U})/[\Sigma\mathcal{U}]\cong \mmod\mathcal{U}$ from \cite[proposition 6.2(3)]{IY} and the general versions of \cite[lemmas 4.3, 4.4]{JT}, see \cite[Remark 4.2]{JT}.
\end{remark}
As mentioned in Remark \ref{rmk_gen_version}, the argument proving \cite[Theorem 4.5]{JT} can be adjusted to prove the same result in our setup. We state it here for the convenience of the reader.

\begin{proposition}[{\cite[Theorem 4.5]{JT}}]\label{thm_JT_45}
If $A\xrightarrow{\alpha} B\xrightarrow{\beta} C\xrightarrow{\gamma}\Sigma A$ is a triangle in $\mathcal{C}$, then
\begin{align*}
    \textup{index}_\mathcal{U}(B)= \textup{index}_\mathcal{U}(A)+\textup{index}_\mathcal{U}(C)-\theta_\mathcal{U}(\im F_\mathcal{U}(\gamma)).
\end{align*}
\end{proposition}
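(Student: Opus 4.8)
The plan is to establish the index formula by constructing, for a given triangle $A\xrightarrow{\alpha} B\xrightarrow{\beta} C\xrightarrow{\gamma}\Sigma A$, a suitable tower of triangles for $B$ that interpolates between towers for $A$ and $C$, and then reading off the alternating sums of the $\mathcal{U}$-cover terms. First I would factor the connecting morphism $\gamma$ through its image: applying the homological functor $F_\mathcal{U}$ to the triangle yields an exact sequence $F_\mathcal{U}(A)\to F_\mathcal{U}(B)\to F_\mathcal{U}(C)\xrightarrow{F_\mathcal{U}(\gamma)} F_\mathcal{U}(\Sigma A)$ in $\mmod\mathcal{U}$, so that $\im F_\mathcal{U}(\gamma)$ is the obstruction measuring the failure of $F_\mathcal{U}(B)\cong F_\mathcal{U}(A)\oplus F_\mathcal{U}(C)$; this is precisely the correction term $\theta_\mathcal{U}(\im F_\mathcal{U}(\gamma))$ that appears in the claimed formula.

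Next I would use the octahedral axiom together with the decomposition $\mathcal{C}=\mathcal{U}*\Sigma\mathcal{U}*\dots*\Sigma^{m-1}\mathcal{U}$ from \cite[Theorem 3.1]{IY} to splice the $\mathcal{U}$-cover towers of $A$ and $C$ into a tower for $B$. Concretely, since $\textup{index}_\mathcal{U}$ is induced from a map on objects and the $\mathcal{U}$-covers in Definition \ref{defn_index} are unique up to isomorphism, the additivity $\textup{index}_\mathcal{U}(B)=\textup{index}_\mathcal{U}(A)+\textup{index}_\mathcal{U}(C)$ would hold on the nose whenever $F_\mathcal{U}(\gamma)=0$, i.e.\ when the triangle behaves like a split situation at the level of $\mmod\mathcal{U}$. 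The role of $\theta_\mathcal{U}$ is to record exactly how the cover terms must be adjusted when $F_\mathcal{U}(\gamma)\neq 0$. Here I would invoke the defining formula $\theta_\mathcal{U}([F_\mathcal{U}N])=\textup{index}_\mathcal{U}(\Sigma^{-1}N)+\textup{index}_\mathcal{U}(N)$ for $N\in\mathcal{U}*\Sigma\mathcal{U}$, choosing $N$ so that $F_\mathcal{U}(N)\cong\im F_\mathcal{U}(\gamma)$; the equivalence $(\mathcal{U}*\Sigma\mathcal{U})/[\Sigma\mathcal{U}]\cong\mmod\mathcal{U}$ from \cite[Proposition 6.2(3)]{IY} guarantees such an $N$ exists.

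The argument then reduces to a careful bookkeeping exercise: assembling the three towers and comparing alternating sums of cover terms, using that the cover of an object is right minimal and hence that each $U_i$ is determined up to isomorphism. I would organise this by induction on $m$, peeling off the outermost cover $\mu_0:U_0\to C$ in each tower and relating the truncated towers for $\Sigma^{-1}X_1$-type objects. Since the entire statement is asserted to be the general-setup version of \cite[Theorem 4.5]{JT} (see Remark \ref{rmk_gen_version}), and the only change from \cite{JT} is dropping the finiteness hypothesis $\mathcal{U}=\textup{add}\,U$, the cleanest route is to verify that none of the steps in the original proof used that hypothesis essentially: the homological functor $F_\mathcal{U}$, the towers of triangles, the $\mathcal{U}$-covers, and the map $\theta_\mathcal{U}$ all make sense verbatim in the present setting.

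The main obstacle I anticipate is the compatibility of the $\mathcal{U}$-cover towers under the octahedral gluing, namely ensuring that the intermediate objects $X_i$ and their covers splice correctly so that the alternating sum telescopes to give exactly the stated identity rather than some sign-shifted variant. The subtlety is that $\mathcal{U}$-covers are only right minimal, not functorial, so one cannot simply apply a functor to a morphism of triangles; instead one must argue that any two cover towers of the same object have isomorphic cover terms, and that the glued tower for $B$ is itself a valid cover tower (each $\mu_i$ genuinely a $\mathcal{U}$-cover, not merely a precover). Establishing this minimality after gluing, in the absence of the finiteness assumption, is the step that requires the most care, and it is where the general version of \cite[Lemmas 4.3, 4.4]{JT} cited in the preceding remark does the essential work.
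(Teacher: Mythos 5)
The first thing to note is that the paper does not actually prove this proposition: it is imported verbatim from \cite[Theorem 4.5]{JT}, and the sentence preceding it, together with Remark \ref{rmk_gen_version}, simply asserts that J{\o}rgensen's proof goes through once the hypothesis $\mathcal{U}=\text{add}\,U$ (finitely many indecomposables) is dropped. Your third paragraph --- verify that no step of the original proof in \cite{JT} uses that hypothesis essentially, with the general versions of \cite[Lemmas 4.3, 4.4]{JT} supplying what is needed for $\theta_{\mathcal{U}}$ --- is precisely the paper's treatment, so on your primary route the proposal and the paper coincide. The from-scratch sketch in your other paragraphs goes beyond anything the paper attempts, and, as you concede in your final paragraph, it is not a complete argument: the step where the correction term $\theta_\mathcal{U}(\im F_\mathcal{U}(\gamma))$ is extracted from the failure of the spliced precover tower for $B$ to be a genuine cover tower is the entire content of the theorem, and ``careful bookkeeping'' does not discharge it --- in particular, your claim that additivity holds ``on the nose'' when $F_\mathcal{U}(\gamma)=0$ is itself a nontrivial special case, not an observation. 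One small misattribution: \cite[Lemmas 4.3, 4.4]{JT} are what make $\theta_{\mathcal{U}}$ well defined (see the remark following Definition \ref{defn_theta_U}); they are not lemmas about minimality of glued towers, so they cannot be invoked to close the gap you identify. Since your fallback is the adaptation argument, which is what the paper itself relies on, the proposal is acceptable as it stands, but the octahedral outline should be regarded as motivation rather than proof.
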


\section{Morphisms between Grothendieck groups}\label{section_Gr_maps}

\begin{defn}
There are surjective homomorphisms given by the quotient maps
\begin{align*}
    \pi_{\mathcal{C}}: K_0^{sp}(\mathcal{C})\longrightarrow K_0(\mathcal{C}),\,\,
    \pi_{\mathcal{U}}: K_0^{sp}(\mathcal{U})\longrightarrow K_0^{sp}(\mathcal{U})/\im\theta_{\mathcal{U}},
\end{align*}
and injective homomorphisms given by the inclusions
\begin{align*}
    \iota_{\mathcal{C}}:\Ker\pi_\mathcal{C}\rightarrow K_0^{sp}(\mathcal{C}),\,\, \iota_{\mathcal{U}}:\Ker\pi_{\mathcal{U}}\rightarrow K_0^{sp}(\mathcal{U}) \text{ and } j_\mathcal{U}:K_0^{sp}(\mathcal{U})\rightarrow K_0^{sp}(\mathcal{C}).
\end{align*}
\end{defn}

\begin{remark}
Consider the diagram:
\begin{align}\label{diagram_morphisms_f_U}
\begin{gathered}
    \xymatrix@!0@R=6em@C=18em{
    \Ker \pi_{\mathcal{C}}\ar@{^{(}->}[d]_{\iota_{\mathcal{C}}} &\Ker \pi_{\mathcal{U}}\ar@{^{(}->}[d]^{\iota_{\mathcal{U}}}\\
    K_0^{sp}(\mathcal{C})\ar@/^1pc/[r]^-{\text{index}_{\mathcal{U}}}\ar@{->>}[d]_{\pi_{\mathcal{C}}}&K_0^{sp}(\mathcal{U})\ar@/^1pc/[l]^{j_{\mathcal{U}}}\ar@{->>}[d]^{\mathcal{\pi_{\mathcal{U}}}}\\
    K_0(\mathcal{C})\ar@/^1pc/[r]^-{f_{\mathcal{U}}} & K_0^{sp}(\mathcal{U})/\im\theta_{\mathcal{U}}.\ar@{-->}@/^1pc/[l]^-{\exists\, g_{\mathcal{U}} \,?}
    }
\end{gathered}    
\end{align}
It is clear that $\text{index}_{\mathcal{U}}\circ j_{\mathcal{U}}=1_{K_0^{sp}(\mathcal{U})}$ and so $\pi_{\mathcal{U}}\circ \text{index}_{\mathcal{U}}\circ j_{\mathcal{U}}=\pi_{\mathcal{U}}$.
We will show that $\pi_{\mathcal{C}}\circ j_{\mathcal{U}}\circ \text{index}_{\mathcal{U}}=\pi_{\mathcal{C}}$ and that there exists a morphism $f_{\mathcal{U}}: K_0(\mathcal{C})\rightarrow K_0^{sp}(\mathcal{U})/\im\theta_{\mathcal{U}}$ such that
\begin{align*}
    f_{\mathcal{U}}\circ\pi_{\mathcal{C}}=\pi_{\mathcal{U}}\circ\text{index}_{\mathcal{U}}.
\end{align*}
Moreover, adding some assumptions on $\mathcal{C}$ and/or $\mathcal{U}$, we will prove that there exists a morphism
\begin{align*}
    g_{\mathcal{U}}: K_0^{sp}(\mathcal{U})/\im\theta_{\mathcal{U}}\rightarrow K_0(\mathcal{C})
\end{align*}
such that $g_{\mathcal{U}}\circ \pi_{\mathcal{U}}=\pi_{\mathcal{C}}\circ j_{\mathcal{U}}$. In this case, $f_{\mathcal{U}}$ and $g_{\mathcal{U}}$ become inverse isomorphisms.
In the next sections, we consider different sets of extra assumptions under which such a $g_{\mathcal{U}}$ exists.
\end{remark}

\begin{lemma}\label{lemma_pi_ind_j}
We have that
$\pi_{\mathcal{C}}\circ j_{\mathcal{U}}\circ \text{index}_{\mathcal{U}}=\pi_{\mathcal{C}}$.
\end{lemma}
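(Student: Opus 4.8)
The plan is to check the identity on the generators of $K_0^{sp}(\mathcal{C})$. Since the group $K_0^{sp}(\mathcal{C})$ is generated by the classes $[C]$ with $C\in\mathcal{C}$, and $\pi_{\mathcal{C}}$, $j_{\mathcal{U}}$, $\text{index}_{\mathcal{U}}$ are all group homomorphisms, it is enough to prove $\pi_{\mathcal{C}}\big(j_{\mathcal{U}}(\text{index}_{\mathcal{U}}(C))\big)=\pi_{\mathcal{C}}([C])$ for every object $C$. Recalling that $\text{index}_{\mathcal{U}}(C)=\sum_{i=0}^{m-1}(-1)^i[U_i]$ for the objects $U_i$ appearing in a tower of triangles for $C$ as in Definition \ref{defn_index}, and that $j_{\mathcal{U}}$ simply reinterprets this class inside $K_0^{sp}(\mathcal{C})$, the claim reduces to the equality
\begin{align*}
\sum_{i=0}^{m-1}(-1)^i[U_i]=[C]
\end{align*}
in $K_0(\mathcal{C})$.

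To establish this, I would first read the triangles off the tower. Writing $X_0:=C$ and $X_{m-1}:=U_{m-1}$, the oriented triangles of the tower are
\begin{align*}
X_{i+1}\rightarrow U_i\xrightarrow{\ \mu_i\ } X_i\rightarrow\Sigma X_{i+1}\qquad(i=0,1,\dots,m-2).
\end{align*}
Each of these is a triangle in $\mathcal{C}$, so the defining relations of $K_0(\mathcal{C})$ give $[X_i]=[U_i]-[X_{i+1}]$ in $K_0(\mathcal{C})$ for every such $i$.

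Then I would telescope, starting from $[C]=[X_0]$. An easy induction on $k$ yields $[X_0]=\sum_{i=0}^{k-1}(-1)^i[U_i]+(-1)^k[X_k]$; taking $k=m-1$ and using $X_{m-1}=U_{m-1}$ collapses the boundary term into the sum and produces $[C]=\sum_{i=0}^{m-1}(-1)^i[U_i]$ in $K_0(\mathcal{C})$, which is exactly the required equality.

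This argument presents no real obstacle: it is a direct telescoping of the triangle relations. The only points needing care are reading the triangles—and especially their orientations—correctly off the tower, so that the telescoped signs match the alternating sum defining $\text{index}_{\mathcal{U}}$, and observing that the last object $X_{m-1}$ of the tower equals $U_{m-1}$, which is what lets the sum close up into $\text{index}_{\mathcal{U}}(C)$ without a leftover term.
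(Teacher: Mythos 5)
Your proof is correct and follows essentially the same route as the paper: both reduce to showing $\sum_{i=0}^{m-1}(-1)^i[U_i]=[C]$ in $K_0(\mathcal{C})$ using the triangle relations coming from the tower in Definition \ref{defn_index}. The only difference is that you spell out the telescoping induction and the boundary identification $X_{m-1}=U_{m-1}$ explicitly, which the paper leaves implicit.
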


\begin{proof}
Given any object $C\in\mathcal{C}$, consider the tower of triangles from Definition \ref{defn_index}. We have that
\begin{align*}
    \text{index}_{\mathcal{U}}(C)=\sum_{i=0}^{m-1}(-1)^i[U_i].
\end{align*}
Using the relations in $K_0(\mathcal{C})$ corresponding to the triangles in the tower, we have that
\begin{align*}
    \pi_{\mathcal{C}}\circ j_{\mathcal{U}}\circ \text{index}_{\mathcal{U}} ([C])=\pi_{\mathcal{C}}\Bigg(\sum_{i=0}^{m-1}(-1)^i[U_i]\Bigg)=[C]=\pi_{\mathcal{C}}([C]).
\end{align*}
Since this is true for arbitrary $C\in\mathcal{C}$, we conclude that $\pi_{\mathcal{C}}\circ j_{\mathcal{U}}\circ \text{index}_{\mathcal{U}}=\pi_{\mathcal{C}}$.
\end{proof}

\begin{lemma}\label{lemma_f_U}
There is a homomorphism $f_{\mathcal{U}}: K_0(\mathcal{C})\rightarrow K_0^{sp}(\mathcal{U})/\im\theta_{\mathcal{U}}$ such that
    \begin{align*}
        f_{\mathcal{U}}\circ\pi_{\mathcal{C}}=\pi_{\mathcal{U}}\circ\text{index}_{\mathcal{U}}.
    \end{align*}
\end{lemma}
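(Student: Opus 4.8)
The plan is to produce $f_{\mathcal{U}}$ by appealing to the universal property of the quotient map $\pi_{\mathcal{C}}$. Since $\pi_{\mathcal{C}}\colon K_0^{sp}(\mathcal{C})\to K_0(\mathcal{C})$ is a surjection, a homomorphism $f_{\mathcal{U}}$ satisfying $f_{\mathcal{U}}\circ\pi_{\mathcal{C}}=\pi_{\mathcal{U}}\circ\text{index}_{\mathcal{U}}$ exists (and is then automatically unique) precisely when $\ker\pi_{\mathcal{C}}\subseteq\ker(\pi_{\mathcal{U}}\circ\text{index}_{\mathcal{U}})$. So the whole proof reduces to verifying this containment of kernels.

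By the definition of $K_0(\mathcal{C})$, the kernel $\ker\pi_{\mathcal{C}}$ is generated by the elements $[A]-[B]+[C]$ arising from triangles $A\xrightarrow{\alpha}B\xrightarrow{\beta}C\xrightarrow{\gamma}\Sigma A$ in $\mathcal{C}$. Hence it suffices to show that each such generator lies in $\ker(\pi_{\mathcal{U}}\circ\text{index}_{\mathcal{U}})$, that is, that $\text{index}_{\mathcal{U}}(A)-\text{index}_{\mathcal{U}}(B)+\text{index}_{\mathcal{U}}(C)\in\im\theta_{\mathcal{U}}$. This is exactly what Proposition \ref{thm_JT_45} delivers: rearranging the identity stated there gives
\[
\text{index}_{\mathcal{U}}(A)-\text{index}_{\mathcal{U}}(B)+\text{index}_{\mathcal{U}}(C)=\theta_{\mathcal{U}}(\im F_{\mathcal{U}}(\gamma)),
\]
which manifestly lies in $\im\theta_{\mathcal{U}}$. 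Applying $\pi_{\mathcal{U}}$ annihilates the right-hand side, so every generator of $\ker\pi_{\mathcal{C}}$ maps to zero under $\pi_{\mathcal{U}}\circ\text{index}_{\mathcal{U}}$, which yields the required containment and hence the map $f_{\mathcal{U}}$.

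The point worth noting is that there is essentially no genuine obstacle once Proposition \ref{thm_JT_45} is available: all the real content sits in that result (the general-setup version of \cite[Theorem 4.5]{JT}), while the present lemma is pure diagram bookkeeping via the universal property of a quotient. The only bookkeeping to keep straight is that $\text{index}_{\mathcal{U}}$ here denotes the induced homomorphism on $K_0^{sp}(\mathcal{C})$ (as in the Remark following Definition \ref{defn_index}), and that the generating set of $\ker\pi_{\mathcal{C}}$ ranges over \emph{all} triangles, not merely Auslander--Reiten ones; since Proposition \ref{thm_JT_45} holds for every triangle, this causes no difficulty.
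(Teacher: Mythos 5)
Your proof is correct and follows essentially the same route as the paper: both reduce the existence of $f_{\mathcal{U}}$ to showing that $\pi_{\mathcal{U}}\circ\text{index}_{\mathcal{U}}$ kills the generators $[A]-[B]+[C]$ of $\Ker\pi_{\mathcal{C}}$ coming from arbitrary triangles, and both conclude via Proposition \ref{thm_JT_45} that each such generator maps to $\theta_{\mathcal{U}}([\im F_{\mathcal{U}}(\gamma)])\in\im\theta_{\mathcal{U}}$. Your sign rearrangement of the index identity matches the paper's computation exactly.
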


\begin{proof}
There exists a homomorphism $f_{\mathcal{U}}$ with the desired property if and only if $\pi_{\mathcal{U}}\circ\text{index}_{\mathcal{U}}\circ\iota_{\mathcal{C}}=0$.
Note that
\begin{align*}
    \Ker\pi_{\mathcal{C}}=\langle  [A]-[B]+[C] \mid A\rightarrow B\rightarrow C\xrightarrow{\gamma}\Sigma A \text{ is a triangle in }\mathcal{C} \rangle.
\end{align*}
For any generator $[A]-[B]+[C]$ of $\Ker\pi_{\mathcal{C}}$ corresponding to a triangle $A\rightarrow B\rightarrow C\xrightarrow{\gamma}\Sigma A$ in $\mathcal{C}$, we have that
\begin{align*}
    \pi_{\mathcal{U}}\circ\text{index}_{\mathcal{U}}\circ\iota_{\mathcal{C}} ([A]-[B]+[C])&=
    \pi_{\mathcal{U}}(\text{index}_{\mathcal{U}}(A)-\text{index}_{\mathcal{U}}(B)+\text{index}_{\mathcal{U}}(C))\\
    &= \pi_{\mathcal{U}}(\theta_{\mathcal{U}}([\im F_{\mathcal{U}}(\gamma)]))=0,
\end{align*}
where the second equality is obtained by Proposition \ref{thm_JT_45}. Hence $\pi_{\mathcal{U}}\circ\text{index}_{\mathcal{U}}\circ\iota_{\mathcal{C}}=0$ as desired.
\end{proof}

\begin{proposition}\label{prop_if_g_exists}
Suppose there exists a homomorphism $g_{\mathcal{U}}:K_0^{sp}(\mathcal{U})/\im\theta_{\mathcal{U}}\rightarrow K_0(\mathcal{C})$ such that $g_{\mathcal{U}}\circ\pi_{\mathcal{U}}=\pi_{\mathcal{C}}\circ j_{\mathcal{U}}$. Then $f_{\mathcal{U}}$ and $g_{\mathcal{U}}$ are mutually inverse and 
\begin{align*}
    K_0^{sp}(\mathcal{U})/\im\theta_{\mathcal{U}}\cong K_0(\mathcal{C}).
\end{align*}
\end{proposition}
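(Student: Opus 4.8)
The plan is to verify the two claims of Proposition \ref{prop_if_g_exists} directly from the commutativity relations already established in the excerpt, treating $f_{\mathcal{U}}$ and $g_{\mathcal{U}}$ as the two candidate maps between $K_0(\mathcal{C})$ and $K_0^{sp}(\mathcal{U})/\im\theta_{\mathcal{U}}$ and checking that each composite is the identity. The key identities available to me are: $\text{index}_{\mathcal{U}}\circ j_{\mathcal{U}}=1_{K_0^{sp}(\mathcal{U})}$ (noted in the remark), Lemma \ref{lemma_pi_ind_j} stating $\pi_{\mathcal{C}}\circ j_{\mathcal{U}}\circ\text{index}_{\mathcal{U}}=\pi_{\mathcal{C}}$, the defining property of $f_{\mathcal{U}}$ from Lemma \ref{lemma_f_U}, namely $f_{\mathcal{U}}\circ\pi_{\mathcal{C}}=\pi_{\mathcal{U}}\circ\text{index}_{\mathcal{U}}$, and the hypothesised property of $g_{\mathcal{U}}$, namely $g_{\mathcal{U}}\circ\pi_{\mathcal{U}}=\pi_{\mathcal{C}}\circ j_{\mathcal{U}}$.

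First I would show $g_{\mathcal{U}}\circ f_{\mathcal{U}}=1_{K_0(\mathcal{C})}$. Since $\pi_{\mathcal{C}}$ is surjective, it suffices to prove $g_{\mathcal{U}}\circ f_{\mathcal{U}}\circ\pi_{\mathcal{C}}=\pi_{\mathcal{C}}$. Composing the defining relations gives
\begin{align*}
g_{\mathcal{U}}\circ f_{\mathcal{U}}\circ\pi_{\mathcal{C}}=g_{\mathcal{U}}\circ\pi_{\mathcal{U}}\circ\text{index}_{\mathcal{U}}=\pi_{\mathcal{C}}\circ j_{\mathcal{U}}\circ\text{index}_{\mathcal{U}}=\pi_{\mathcal{C}},
\end{align*}
where the last step is exactly Lemma \ref{lemma_pi_ind_j}. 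Surjectivity of $\pi_{\mathcal{C}}$ then cancels it on the right to yield $g_{\mathcal{U}}\circ f_{\mathcal{U}}=1_{K_0(\mathcal{C})}$.

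Next I would show $f_{\mathcal{U}}\circ g_{\mathcal{U}}=1$ on $K_0^{sp}(\mathcal{U})/\im\theta_{\mathcal{U}}$. Again using that $\pi_{\mathcal{U}}$ is surjective, it suffices to check the composite agrees with the identity after precomposing with $\pi_{\mathcal{U}}$. I compute
\begin{align*}
f_{\mathcal{U}}\circ g_{\mathcal{U}}\circ\pi_{\mathcal{U}}=f_{\mathcal{U}}\circ\pi_{\mathcal{C}}\circ j_{\mathcal{U}}=\pi_{\mathcal{U}}\circ\text{index}_{\mathcal{U}}\circ j_{\mathcal{U}}=\pi_{\mathcal{U}},
\end{align*}
using the $g_{\mathcal{U}}$-relation, then the $f_{\mathcal{U}}$-relation, then $\text{index}_{\mathcal{U}}\circ j_{\mathcal{U}}=1$. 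Cancelling the surjection $\pi_{\mathcal{U}}$ gives $f_{\mathcal{U}}\circ g_{\mathcal{U}}=1$. Together the two computations show $f_{\mathcal{U}}$ and $g_{\mathcal{U}}$ are mutually inverse, hence isomorphisms, establishing $K_0^{sp}(\mathcal{U})/\im\theta_{\mathcal{U}}\cong K_0(\mathcal{C})$.

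This proposition is genuinely a formal diagram chase, so I do not expect a real obstacle; every needed identity is already in hand and the only subtlety is remembering to exploit the surjectivity of $\pi_{\mathcal{C}}$ and $\pi_{\mathcal{U}}$ to cancel them on the right after checking each composite on the image. The substantive mathematical work — constructing $g_{\mathcal{U}}$ under suitable hypotheses on $\mathcal{C}$ and $\mathcal{U}$ — is deliberately deferred to the later sections, so here I would simply package the above two cancellations cleanly.
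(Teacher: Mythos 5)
Your proposal is correct and is essentially identical to the paper's own proof: both verify $g_{\mathcal{U}}\circ f_{\mathcal{U}}\circ\pi_{\mathcal{C}}=\pi_{\mathcal{C}}$ and $f_{\mathcal{U}}\circ g_{\mathcal{U}}\circ\pi_{\mathcal{U}}=\pi_{\mathcal{U}}$ using the same three identities, then cancel the surjections $\pi_{\mathcal{C}}$ and $\pi_{\mathcal{U}}$ on the right.
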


\begin{proof}
Using Lemmas \ref{lemma_pi_ind_j} and \ref{lemma_f_U} and $g_\mathcal{U}$ with the stated property, we have
\begin{align*}
    f_{\mathcal{U}}\circ g_{\mathcal{U}}\circ\pi_{\mathcal{U}}&=f_{\mathcal{U}}\circ \pi_{\mathcal{C}}\circ j_{\mathcal{U}} =\pi_{\mathcal{U}}\circ\text{index}_{\mathcal{U}}\circ j_{\mathcal{U}}=\pi_{\mathcal{U}}= 1_{K_0^{sp}(\mathcal{U})/\im\theta_{\mathcal{U}}}\circ\pi_{\mathcal{U}},\\
    g_{\mathcal{U}}\circ f_{\mathcal{U}}\circ \pi_{\mathcal{C}}&= g_{\mathcal{U}}\circ \pi_{\mathcal{U}}\circ \text{index}_{\mathcal{U}}= \pi_{\mathcal{C}}\circ j_{\mathcal{U}}\circ \text{index}_{\mathcal{U}}
   =\pi_{\mathcal{C}}=1_{K_0(\mathcal{C})}\circ\pi_{\mathcal{C}}.
    \end{align*}
Since $\pi_{\mathcal{U}}$ and $\pi_{\mathcal{C}}$ are surjective, and hence right cancellative, we have
\begin{align*}
    f_{\mathcal{U}}\circ g_{\mathcal{U}}=1_{K_0^{sp}(\mathcal{U})/\im\theta_{\mathcal{U}}} \,\,\, \text{ and }\,\,\, g_{\mathcal{U}}\circ f_{\mathcal{U}}= 1_{K_0(\mathcal{C})}.
\end{align*}
\end{proof}

\section{$\mathcal{C}$ with Serre functor and $n$-cluster tilting subcategory $\mathcal{T}$}\label{section_thm_T}
\begin{defn}[{\cite[Section 1.1]{JKu}}]\label{defn_lb}
Let $\mathcal{T}\subseteq\mathcal{C}$ be a full subcategory and Ind$\,\mathcal{T}$ be a full subcategory of $\mathcal{T}$ containing precisely one object from each isomorphism class of indecomposable objects in $\mathcal{T}$. We say that Ind$\,\mathcal{T}$ is \textit{locally bounded} if for every  object $T\in\text{Ind}\,\mathcal{T}$, there are only finitely many objects $V\in\text{Ind}\,\mathcal{T}$ such that $\mathcal{C}(U,V)\neq 0$ and only finitely many objects $W\in\text{Ind}\,\mathcal{T}$ such that $\mathcal{C}(W,U)\neq 0$.
\end{defn}
\begin{setup}\label{setup_4}
Assume that $\mathcal{C}$ has a Serre functor $\mathbb{S}$.
Let $n\geq 2$ be an integer and let $\mathcal{T}\subseteq\mathcal{C}$ be an $n$-cluster tilting subcategory such that $\text{Ind } \mathcal{T}$ is locally bounded.
\end{setup}

\begin{remark}\label{rmk_Sn}
Using the same notation as in \cite[Section 3]{IY}, we define the functor $\mathbb{S}_n:=\mathbb{S}\circ\Sigma^{-n}$ on $\mathcal{C}$.
Let $M$ be an indecomposable in $\mathcal{T}$. By \cite[Theorem 3.10]{IY}, there is an Auslander--Reiten $(n+2)$-angle in $\mathcal{T}$, as defined in \cite[Definition 3.8]{IY}, given by a tower of triangles in $\mathcal{C}$ of the form:
\begin{align}\label{diagram_AR}
\begin{gathered}
\xymatrix@!0 @C=3em @R=3em{
& T_{n-1}\ar[rd]^{\tau_{n-1}}\ar[rr] && T_{n-2}\ar[rd]^{\tau_{n-2}}\ar[r]&&\cdots&\ar[r]& T_1\ar[rr]\ar[rd]^{\tau_1} &&T_0\ar[rd]^{\tau_0}
\\
\mathbb{S}_n(M)\ar[ru]&& X_{n-1}\ar@{~>}[ll]^{\xi_{n-1}}\ar[ru]&& X_{n-2}\ar@{~>}[ll]^{\xi_{n-2}}&\cdots& X_2\ar[ru]&& X_1 \ar@{~>}[ll]^{\xi_1}\ar[ru]&& M.\ar@{~>}[ll]^{\xi_0}
}
\end{gathered}
\end{align}
Note that $M,\, \mathbb{S}_n(M),\,T_0,\,\dots,\, T_{n-1}\in\mathcal{T}$.
\end{remark}

\begin{lemma}\label{lemma_F_xi_zero}
Let $M\in\mathcal{T}$ be an indecomposable with Auslander--Reiten $(n+2)$-angle as in (\ref{diagram_AR}). Then $F_{\mathcal{T}}(\xi_i)=0$ for any $i=1,\dots,n-1$.
\end{lemma}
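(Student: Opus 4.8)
The plan is to prove something slightly stronger and purely formal, namely that the \emph{codomain} of $F_{\mathcal{T}}(\xi_i)$ is already the zero object of $\mmod\mathcal{T}$. First I would read off, from the tower of triangles (\ref{diagram_AR}), the individual triangles it encodes. Writing $X_0:=M$ and $X_n:=\mathbb{S}_n(M)$, these are
\[
X_{i+1}\rightarrow T_i\xrightarrow{\tau_i} X_i\xrightarrow{\xi_i}\Sigma X_{i+1},\qquad i=0,\dots,n-1,
\]
so that $\xi_i$ is precisely the wavy connecting morphism $X_i\to\Sigma X_{i+1}$ whose image under $F_{\mathcal{T}}$ we must control.

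Next I would locate each $X_i$ in a filtration by shifts of $\mathcal{T}$. Since $X_n=\mathbb{S}_n(M)\in\mathcal{T}$ and each $T_i\in\mathcal{T}$, rotating the triangle above yields a triangle $T_i\rightarrow X_i\rightarrow\Sigma X_{i+1}\rightarrow\Sigma T_i$, whence $X_i\in\mathcal{T}*\Sigma\{X_{i+1}\}$. A downward induction starting from $X_n\in\mathcal{T}$, using associativity of $*$ and the identity $\Sigma(\mathcal{A}*\mathcal{B})=\Sigma\mathcal{A}*\Sigma\mathcal{B}$, then gives
\[
X_i\in\mathcal{T}*\Sigma\mathcal{T}*\cdots*\Sigma^{n-i}\mathcal{T}.
\]

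I would then apply this to $\Sigma X_{i+1}$. For $i\geq 1$ we obtain $\Sigma X_{i+1}\in\Sigma\mathcal{T}*\cdots*\Sigma^{n-i}\mathcal{T}$, where every occurring shift lies in $\{1,\dots,n-1\}$, precisely because $i\geq 1$ forces $n-i\leq n-1$. By the $n$-cluster tilting property (Definition \ref{defn_m-ct}), $\mathcal{C}(T,\Sigma^j T')=\Ext^j_{\mathcal{C}}(T,T')=0$ for all $T,T'\in\mathcal{T}$ and $1\leq j\leq n-1$; feeding the defining triangles of the filtration of $\Sigma X_{i+1}$ into the long exact sequences obtained by applying $\mathcal{C}(T,-)$ shows, by induction on the length of the filtration, that $\mathcal{C}(T,\Sigma X_{i+1})=0$ for every $T\in\mathcal{T}$. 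Hence $F_{\mathcal{T}}(\Sigma X_{i+1})=0$ in $\mmod\mathcal{T}$, and since $F_{\mathcal{T}}(\xi_i)$ is a morphism with this zero codomain, $F_{\mathcal{T}}(\xi_i)=0$.

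The only delicate point is the degree bookkeeping: one must verify that the top shift $n-i$ never reaches $n$, since $\Ext^n_{\mathcal{C}}(\mathcal{T},\mathcal{T})$ need not vanish and the vanishing argument would then collapse. This is exactly what restricts the conclusion to $i=1,\dots,n-1$ and legitimately excludes $i=0$, where $\Sigma X_1$ genuinely involves the summand $\Sigma^{n}\mathcal{T}$. Everything else is a routine induction built on the cluster tilting orthogonality.
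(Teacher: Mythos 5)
Your proof is correct, but it takes a genuinely different route from the paper's. The paper never looks at the codomain of $F_{\mathcal{T}}(\xi_i)$: it uses the fact, built into the definition of an Auslander--Reiten $(n+2)$-angle in \cite[Definition 3.8]{IY}, that each $\tau_i\colon T_i\rightarrow X_i$ is a $\mathcal{T}$-cover. Given $\overline{T}\in\mathcal{T}$ and $\tau\in\mathcal{C}(\overline{T},X_i)$, one factors $\tau=\tau_i\circ\tau'$ and computes $\xi_i\circ\tau=\xi_i\circ\tau_i\circ\tau'=0$ because consecutive morphisms in a triangle compose to zero; this is a two-line argument with no induction. You instead discard the cover property entirely and prove the stronger statement $F_{\mathcal{T}}(\Sigma X_{i+1})=0$, using only the shape of the tower (the middle terms $T_i$ and the top endpoint $X_n=\mathbb{S}_n(M)$ lie in $\mathcal{T}$) together with the orthogonality $\Ext^{1\dots n-1}_{\mathcal{C}}(\mathcal{T},\mathcal{T})=0$. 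Your degree bookkeeping is right: the top shift $n-i$ stays below $n$ exactly when $i\geq 1$, and the exclusion of $i=0$ is genuine, since the paper's Lemma \ref{lemma_theta_S_M} identifies $\im F_{\mathcal{T}}(\xi_0)=\mathcal{C}(-,\Sigma X_1)|_{\mathcal{T}}$ with the simple module $S_M\neq 0$. As for what each approach buys: the paper's is shorter because the minimality data comes for free from the definition of Auslander--Reiten $(n+2)$-angles, while yours is more robust, applying to any such tower with no cover or minimality hypothesis and yielding vanishing of the whole functor $F_{\mathcal{T}}(\Sigma X_{i+1})$ rather than of the single morphism $F_{\mathcal{T}}(\xi_i)$. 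In fact your argument is essentially the one the paper itself deploys later in Lemma \ref{lemma_F_S_zero}, where the tower comes from a $(d+2)$-angle in $\mathcal{S}$ and the analogous maps $\sigma_l$ need not be covers (cf.\ Remark \ref{remark_no_covers}); so your route unifies the two lemmas, at the modest cost of invoking associativity of the operation $*$ (a standard consequence of the octahedral axiom, used tacitly by the paper when it writes $\mathcal{C}=\mathcal{U}*\Sigma\mathcal{U}*\dots*\Sigma^{m-1}\mathcal{U}$) --- or, if you prefer to avoid that, of running the same induction directly on the triangles of the tower with a two-index claim, exactly as the paper does in Lemma \ref{lemma_F_S_zero}.
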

\begin{proof}
By \cite[Definition 3.8]{IY}, $\tau_i: T_{i}\rightarrow X_i$ is a $\mathcal{T}$-cover of $X_i$. Hence, for every every object $\overline{T}\in\mathcal{T}$ and every morphism $\tau\in \mathcal{C}( \overline{T},X_i)$, there is a morphism $\tau':\overline{T}\rightarrow T_i$ such that $\tau=\tau_i\circ\tau'$. Then,
\begin{align*}
((F_{\mathcal{T}}(\xi_i))(\overline{T}))(\tau)=\xi_i\circ\tau=\xi_i\circ\tau_i\circ\tau'=0,
\end{align*}
where $\xi_i\circ\tau_i=0$ because two consecutive morphisms in a triangle compose to zero. Since this is true for arbitrary $\overline{T}\in\mathcal{T}$ and $\tau\in \mathcal{C}(\overline{T},X_i)$, we conclude that $F_{\mathcal{T}}(\xi_i)=0$ for any $i=1,\dots,n-1$.
\end{proof}

\begin{lemma}\label{lemma_theta_S_M}
Let $M\in\mathcal{T}$ be an indecomposable and consider diagram (\ref{diagram_AR}). Then, as an element in $K_0^{sp}(\mathcal{T})$, we have
\begin{align*}
    -[M]+(-1)^n[\mathbb{S}_n(M)]+[T_0]-[T_1]+\dots +(-1)^{n-1}[T_{n-1}]=-\theta_{\mathcal{T}} ([S_M]),
\end{align*}
where $S_M$ is the simple in $\mmod\mathcal{T}$ that is the top of $\mathcal{C}(-,M)|_{\mathcal{T}}$, the projective in $\mmod\mathcal{T}$ corresponding to $M$.
\end{lemma}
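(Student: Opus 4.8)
The plan is to read off the $n$ genuine triangles encoded in the tower (\ref{diagram_AR}) and feed each of them into the index formula of Proposition \ref{thm_JT_45}. Writing $X_0:=M$ and $X_n:=\mathbb{S}_n(M)$, the tower decomposes into the triangles $\mathcal{D}_j\colon X_{j+1}\to T_j\to X_j\xrightarrow{\xi_j}\Sigma X_{j+1}$ for $j=0,1,\dots,n-1$. Since $M$, $\mathbb{S}_n(M)$ and each $T_j$ already lie in $\mathcal{T}$, their towers from Definition \ref{defn_index} are trivial, so $\textup{index}_{\mathcal{T}}(M)=[M]$, $\textup{index}_{\mathcal{T}}(\mathbb{S}_n(M))=[\mathbb{S}_n(M)]$ and $\textup{index}_{\mathcal{T}}(T_j)=[T_j]$. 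The objects $X_1,\dots,X_{n-1}$ need not lie in $\mathcal{T}$, and computing their indices is the heart of the argument.

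First I would treat the interior triangles $\mathcal{D}_j$ for $j=1,\dots,n-1$. Applying Proposition \ref{thm_JT_45} to $\mathcal{D}_j$ gives $\textup{index}_{\mathcal{T}}(T_j)=\textup{index}_{\mathcal{T}}(X_{j+1})+\textup{index}_{\mathcal{T}}(X_j)-\theta_{\mathcal{T}}(\im F_{\mathcal{T}}(\xi_j))$, and Lemma \ref{lemma_F_xi_zero} tells us that $F_{\mathcal{T}}(\xi_j)=0$ for these values of $j$, so the $\theta_{\mathcal{T}}$-term vanishes. Using $\textup{index}_{\mathcal{T}}(T_j)=[T_j]$, this rearranges to the recursion $\textup{index}_{\mathcal{T}}(X_j)=[T_j]-\textup{index}_{\mathcal{T}}(X_{j+1})$. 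Starting from $\textup{index}_{\mathcal{T}}(X_n)=[\mathbb{S}_n(M)]$ and unwinding, I obtain the closed form
\[
\textup{index}_{\mathcal{T}}(X_j)=\sum_{i=j}^{n-1}(-1)^{i-j}[T_i]+(-1)^{n-j}[\mathbb{S}_n(M)]
\]
for $1\le j\le n-1$; in particular I record the resulting value of $\textup{index}_{\mathcal{T}}(X_1)$.

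Next I would handle the final triangle $\mathcal{D}_0\colon X_1\to T_0\xrightarrow{\tau_0}M\xrightarrow{\xi_0}\Sigma X_1$, which is exactly the one where the simple appears. Applying Proposition \ref{thm_JT_45} here yields $\textup{index}_{\mathcal{T}}(T_0)=\textup{index}_{\mathcal{T}}(X_1)+\textup{index}_{\mathcal{T}}(M)-\theta_{\mathcal{T}}(\im F_{\mathcal{T}}(\xi_0))$, and the key point is to identify $\im F_{\mathcal{T}}(\xi_0)$ with $S_M$. Since $F_{\mathcal{T}}$ is homological, applying it to $\mathcal{D}_0$ produces an exact sequence $F_{\mathcal{T}}(T_0)\xrightarrow{F_{\mathcal{T}}(\tau_0)}F_{\mathcal{T}}(M)\xrightarrow{F_{\mathcal{T}}(\xi_0)}F_{\mathcal{T}}(\Sigma X_1)$, whence $\im F_{\mathcal{T}}(\xi_0)\cong\Coker F_{\mathcal{T}}(\tau_0)$. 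Because $\tau_0$ is the minimal right almost split morphism ending the Auslander--Reiten $(n+2)$-angle, the image of $F_{\mathcal{T}}(\tau_0)$ is precisely $\rad\,\mathcal{C}(-,M)|_{\mathcal{T}}$ inside the projective $\mathcal{C}(-,M)|_{\mathcal{T}}$, so its cokernel is the simple top $S_M$. Substituting $\im F_{\mathcal{T}}(\xi_0)=S_M$, the computed value of $\textup{index}_{\mathcal{T}}(X_1)$, and $\textup{index}_{\mathcal{T}}(M)=[M]$, $\textup{index}_{\mathcal{T}}(T_0)=[T_0]$, then solving for $-\theta_{\mathcal{T}}([S_M])$ and collecting the telescoping signs returns exactly $-[M]+(-1)^n[\mathbb{S}_n(M)]+\sum_{i=0}^{n-1}(-1)^i[T_i]$, as required.

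The step I expect to be the main obstacle is the identification $\im F_{\mathcal{T}}(\xi_0)=S_M$: everything else is bookkeeping with Proposition \ref{thm_JT_45} and solving the recursion. Making this rigorous requires using the defining property of the Auslander--Reiten $(n+2)$-angle, namely that $\tau_0\colon T_0\to M$ is minimal right almost split in $\mathcal{T}$, to conclude that the cokernel of $F_{\mathcal{T}}(\tau_0)=(\tau_0)_*\colon\mathcal{C}(-,T_0)|_{\mathcal{T}}\to\mathcal{C}(-,M)|_{\mathcal{T}}$ is the simple functor $S_M$ atop the projective $\mathcal{C}(-,M)|_{\mathcal{T}}$; combined with exactness of the homological long exact sequence this gives $\im F_{\mathcal{T}}(\xi_0)\cong\Coker F_{\mathcal{T}}(\tau_0)=S_M$.
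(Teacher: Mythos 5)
Your proof is correct, and its overall architecture matches the paper's: both arguments hinge on applying Proposition \ref{thm_JT_45} to the rightmost triangle $X_1\rightarrow T_0\xrightarrow{\tau_0}M\xrightarrow{\xi_0}\Sigma X_1$ and on identifying $[\im F_{\mathcal{T}}(\xi_0)]=[S_M]$ via the fact that $\tau_0$ is minimal right almost split (the paper outsources this identification to Auslander's result, cited as \cite[Corollary 2.5]{AMr}, whereas you reprove it by noting that $\im F_{\mathcal{T}}(\tau_0)=\rad\mathcal{C}(-,M)|_{\mathcal{T}}$; both are fine, and your route via exactness at $F_{\mathcal{T}}(M)$ even avoids the paper's auxiliary observation that $\mathcal{C}(-,\Sigma T_0)|_{\mathcal{T}}=0$). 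The one genuine divergence is in how $\textup{index}_{\mathcal{T}}(X_1)$ is computed. The paper reads it off directly from Definition \ref{defn_index}: since $\tau_1,\dots,\tau_{n-1}$ are $\mathcal{T}$-covers, the truncated tower $\mathbb{S}_n(M)\rightarrow T_{n-1}\rightarrow\dots\rightarrow T_1\rightarrow X_1$ is literally a tower of the type defining the index, so no further argument is needed. You instead run a recursion, applying Proposition \ref{thm_JT_45} to each interior triangle and invoking Lemma \ref{lemma_F_xi_zero} to kill the $\theta_{\mathcal{T}}$-correction terms. Your route is slightly longer but has the merit of actually using Lemma \ref{lemma_F_xi_zero} (which the paper proves but never cites in its own proof of this lemma) and of not requiring one to check that the truncated tower satisfies the defining conditions of the index; the paper's route buys brevity by exploiting that the covers are already built into the definition of an Auslander--Reiten $(n+2)$-angle. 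Your sign bookkeeping in the closed form $\textup{index}_{\mathcal{T}}(X_j)=\sum_{i=j}^{n-1}(-1)^{i-j}[T_i]+(-1)^{n-j}[\mathbb{S}_n(M)]$ and in the final rearrangement is correct.
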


\begin{proof}
Consider the exact sequence induced by the rightmost triangle in (\ref{diagram_AR}):
\begin{align*}
    \mathcal{C}(-,T_0)|_{\mathcal{T}}\xrightarrow{F_{\mathcal{T}}(\tau_0)}\mathcal{C}(-,M)|_{\mathcal{T}}\xrightarrow{F_{\mathcal{T}}(\xi_0)} \mathcal{C}(-,\Sigma X_1)|_{\mathcal{T}}\rightarrow \mathcal{C}(-,\Sigma T_0)|_{\mathcal{T}}.
\end{align*}
Note that $\mathcal{C}(-,\Sigma T_0)|_{\mathcal{T}}=0$ and so $\im F_{\mathcal{T}}(\xi_0)=\Coker F_{\mathcal{T}}(\tau_0) =\mathcal{C}(-,\Sigma X_1)|_{\mathcal{T}}.$
By \cite[Definition 3.8]{IY}, we have that $\tau_0: T_0\rightarrow M$ is minimal right almost split in $\mathcal{T}$ and so using \cite[Corollary 2.5]{AMr}, we have that
\begin{align*}
    S_M = \mathcal{C}(-,\Sigma X_1)|_\mathcal{T}
\end{align*}
is the simple in $\mmod\mathcal{T}$ that is the top of  $\mathcal{C}(-,M)|_\mathcal{T}$. Then, by Proposition \ref{thm_JT_45}, we have
\begin{align}\label{eqn_T_0}
    [T_0]=\text{index}_{\mathcal{T}}(T_0)=[M]+\text{index}_{\mathcal{T}}(X_1)-\theta_{\mathcal{T}}([S_M]).
\end{align}
Moreover, since $\tau_1,\dots,\, \tau_{n-1}$ are $\mathcal{T}$-covers, by Definition \ref{defn_index} we have
\begin{align*}
    \text{index}_{\mathcal{T}}(X_1)=\sum_{i=1}^{n-1} (-1)^{i-1}[T_i]+(-1)^{n-1}[\mathbb{S}_n(M)].
\end{align*}
Substituting this into (\ref{eqn_T_0}), we conclude that
\begin{align*}
   -[M]+(-1)^n[\mathbb{S}_n(M)]+[T_0]-[T_1]+\dots +(-1)^{n-1}[T_{n-1}]=-\theta_{\mathcal{T}} ([S_M]).
\end{align*}
\end{proof}

\begin{remark}\label{remark_im_theta_gen}
Note that Lemma \ref{lemma_theta_S_M} can be applied to any indecomposable in $\mathcal{T}$. Moreover, since $\text{Ind }\mathcal{T}$ is locally bounded, then each object in $\mmod\mathcal{T}$ has finite length and $K_0(\mmod \mathcal{T})$ is generated by the equivalence classes of the simples in  $\mmod\mathcal{T}$. Since any simple object in $\mmod\mathcal{T}$ has the form $S_M$ for some indecomposable $M\in\mathcal{T}$, see \cite[Sections 3.1 and 3.2]{GR}, we have
\begin{align*}
    \im \theta_{\mathcal{T}}=\Bigg\langle
    \xymatrix{ 
    -[M]+(-1)^n[\mathbb{S}_n(M)]+\displaystyle\sum_{i=0}^{n-1} (-1)^i [T_i]
   \,\,\, \Bigg\vert \,\,\,{\begin{matrix}M\in\rm{Ind }\,\mathcal{T} \text{ with Auslander--Reiten }\\ (n+2)\text{-angle } (\ref{diagram_AR})\end{matrix}}}  \Bigg\rangle.
\end{align*}
\end{remark}

\begin{lemma}\label{lemma_f_g_exist}
There is a morphism $g_{\mathcal{T}}: K_0^{sp}(\mathcal{T})/\im\theta_{\mathcal{T}}\rightarrow K_0(\mathcal{C})$ such that
    \begin{align*}
        g_{\mathcal{T}}\circ\pi_{\mathcal{T}}=\pi_{\mathcal{C}}\circ j_{\mathcal{T}}.
    \end{align*}
\end{lemma}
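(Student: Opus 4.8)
The plan is to obtain $g_{\mathcal{T}}$ purely formally, from the universal property of the quotient map $\pi_{\mathcal{T}}$. Since $\Ker\pi_{\mathcal{T}}=\im\theta_{\mathcal{T}}$ by definition of $\pi_{\mathcal{T}}$, a homomorphism $g_{\mathcal{T}}$ satisfying $g_{\mathcal{T}}\circ\pi_{\mathcal{T}}=\pi_{\mathcal{C}}\circ j_{\mathcal{T}}$ exists if and only if the composite $\pi_{\mathcal{C}}\circ j_{\mathcal{T}}\colon K_0^{sp}(\mathcal{T})\rightarrow K_0(\mathcal{C})$ vanishes on $\im\theta_{\mathcal{T}}$. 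Thus the entire problem reduces to checking that $\pi_{\mathcal{C}}\circ j_{\mathcal{T}}$ annihilates $\im\theta_{\mathcal{T}}$.

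To do this I would first invoke Remark \ref{remark_im_theta_gen}, which identifies $\im\theta_{\mathcal{T}}$ as the subgroup of $K_0^{sp}(\mathcal{T})$ generated by the elements $-[M]+(-1)^n[\mathbb{S}_n(M)]+\sum_{i=0}^{n-1}(-1)^i[T_i]$, one for each indecomposable $M\in\text{Ind}\,\mathcal{T}$ with Auslander--Reiten $(n+2)$-angle (\ref{diagram_AR}). Because a group homomorphism vanishes on a subgroup exactly when it vanishes on a generating set, it suffices to verify that $\pi_{\mathcal{C}}\circ j_{\mathcal{T}}$ sends each such generator to $0$ in $K_0(\mathcal{C})$.

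The key computation is to unwind the tower of triangles (\ref{diagram_AR}). Writing $X_0:=M$ and $X_n:=\mathbb{S}_n(M)$, the tower consists of the genuine triangles $X_{i+1}\rightarrow T_i\rightarrow X_i\xrightarrow{\xi_i}\Sigma X_{i+1}$ in $\mathcal{C}$ for $i=0,\dots,n-1$, each of which yields the relation $[X_i]=[T_i]-[X_{i+1}]$ in $K_0(\mathcal{C})$. Telescoping these relations from $i=0$ upward — a short induction on the number of substitutions controls the alternating signs — produces $[M]=\sum_{i=0}^{n-1}(-1)^i[T_i]+(-1)^n[\mathbb{S}_n(M)]$ in $K_0(\mathcal{C})$. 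Since $j_{\mathcal{T}}$ merely includes each $[T_i]$ and $[\mathbb{S}_n(M)]$ into $K_0^{sp}(\mathcal{C})$ and $\pi_{\mathcal{C}}$ then passes to classes in $K_0(\mathcal{C})$, this identity says precisely that $\pi_{\mathcal{C}}\circ j_{\mathcal{T}}$ kills the stated generator, as required. This is the same telescoping already used in Lemma \ref{lemma_pi_ind_j}.

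The argument is essentially routine once the generators of $\im\theta_{\mathcal{T}}$ are in hand. The only point needing care — the mild main obstacle — is the sign bookkeeping together with the identification of the endpoint object $X_n$ with $\mathbb{S}_n(M)$, so that the resulting alternating sum matches the stated generator exactly; everything else follows formally from the universal property of $\pi_{\mathcal{T}}$.
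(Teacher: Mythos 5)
Your proposal is correct and follows essentially the same route as the paper's own proof: reduce existence of $g_{\mathcal{T}}$ to showing $\pi_{\mathcal{C}}\circ j_{\mathcal{T}}$ vanishes on $\im\theta_{\mathcal{T}}=\Ker\pi_{\mathcal{T}}$, use Remark \ref{remark_im_theta_gen} to identify the generators, and kill each generator via the relations in $K_0(\mathcal{C})$ coming from the triangles in the tower (\ref{diagram_AR}). The only difference is cosmetic: you spell out the telescoping $[X_i]=[T_i]-[X_{i+1}]$ explicitly, where the paper simply asserts that the terms cancel.
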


\begin{proof}
Consider diagram (\ref{diagram_morphisms_f_U}) with $\mathcal{U}=\mathcal{T}$.
A morphism $g_\mathcal{U}$ with the desired property exists if and only if $\pi_{\mathcal{C}}\circ j_{\mathcal{T}}\circ\iota_{\mathcal{T}}=0$. 
By Remark \ref{remark_im_theta_gen}, we have
\begin{align*}
    \Ker\pi_{\mathcal{T}}=\Bigg\langle
    \xymatrix{ 
    -[M]+(-1)^n[\mathbb{S}_n(M)]+\displaystyle\sum_{i=0}^{n-1} (-1)^i [T_i]
   \,\,\, \Bigg\vert \,\,\,{\begin{matrix}M\in\rm{Ind }\,\mathcal{T} \text{ with Auslander--Reiten }\\ (n+2)\text{-angle } (\ref{diagram_AR})\end{matrix}}}  \Bigg\rangle.
\end{align*}
Then, for any generator $ -[M]+(-1)^n[\mathbb{S}_n(M)]+\sum_{i=0}^{n-1} (-1)^i [T_i]$ of $\Ker\pi_{\mathcal{T}}$ corresponding to the Auslander--Reiten $(n+2)$-angle (\ref{diagram_AR}), we have
\begin{align*}
    \pi_{\mathcal{C}}\circ j_{\mathcal{T}}\circ\iota_{\mathcal{T}}\Big( -[M]+(-1)^n[\mathbb{S}_n(M)]+\sum_{i=0}^{n-1} (-1)^i [T_i]\Big)&= -[M]+(-1)^n[\mathbb{S}_n(M)]+\sum_{i=0}^{n-1} (-1)^i [T_i]\\&=0,
\end{align*}
where all the terms cancel because of the relations in $K_{0}(\mathcal{C})$ corresponding to the triangles in the tower (\ref{diagram_AR}). 
Hence $\pi_{\mathcal{C}}\circ j_{\mathcal{T}}\circ\iota_{\mathcal{T}}=0$ as desired.
\end{proof}

We now prove Theorem A of the introduction.
\begin{proof}[Proof of Theorem A]
By Lemma \ref{lemma_f_g_exist}, there exists a homomorphism $g_{\mathcal{T}}: K_0^{sp}(\mathcal{T})/\im\theta_{\mathcal{T}}\rightarrow K_0(\mathcal{C})$ such that $g_{\mathcal{T}}\circ\pi_{\mathcal{T}}=\pi_{\mathcal{C}}\circ j_{\mathcal{T}}$. Then, by Proposition \ref{prop_if_g_exists}, we have that $K_0(\mathcal{C})\cong K_0^{sp}(\mathcal{T})/\im\theta_{\mathcal{T}}$ and, by Remark \ref{remark_im_theta_gen}, this completes the proof.
\end{proof}

\begin{remark}
Note that our argument does not apply to the case when $n=1$ as it uses some results, such as Proposition \ref{thm_JT_45}, that rely on $n\geq 2$.
However, in the case when $k$ is an algebraically closed field and $n=1$, Theorem A is an instance of the triangulated analogue by Xiao and Zhu of results of Auslander, see \cite[Theorems 2.2 and 2.3]{AM}, and Butler, see \cite[Theorem in introduction]{BMCR}, on certain module categories.
In this case, the only choice of $1$-cluster tilting subcategory is $\mathcal{C}=\mathcal{T}$ and the tower of triangles (\ref{diagram_AR}) is an Auslander--Reiten triangle in $\mathcal{C}$ of the form
\begin{align*}
\delta: \mathbb{S}\Sigma^{-1}(M)\rightarrow T_0\rightarrow M\rightarrow \Sigma \mathbb{S}\Sigma^{-1}(M).
\end{align*}
Note that, since Ind$\,\mathcal{T}$ is locally bounded, we have that $\mathcal{C}=\mathcal{T}$ is of finite type. Then, by \cite[Theorem 2.1]{XZ},
 we have that $K_0(\mathcal{C})$ is isomorphic to the quotient of $K_0^{sp}(\mathcal{C})$ by the elements $[\delta]:=-[M]+[T_0]-[\mathbb{S}\Sigma^{-1}(M)]$, where $\delta$ runs through all the Auslander--Reiten triangles in $\mathcal{C}$.
\end{remark}


Adding the assumptions that $n\geq 2$ is an even integer and $\mathcal{C}$ is $n$-Calabi--Yau to Setup \ref{setup_4}, we prove Corollary B of the introduction.

\begin{proof}[Proof of Corollary B]
Since $\mathcal{C}$ is $n$-Calabi--Yau, it has Serre functor $\mathbb{S}=\Sigma^n$ so this is a special case of Setup \ref{setup_4}. Note also that in this case $\mathbb{S}_n=\mathbb{S}\circ \Sigma^{-n}$ is the identity functor on $\mathcal{C}$. Hence, since $n$ is even, we have that $-[M]+(-1)^n[\mathbb{S}_n(M)]=-[M]+[M]=0$ and the result follows from Theorem A.
\end{proof}

\begin{remark}
 Note that when $n=2$ and the Auslander--Reiten quiver of $\mathcal{T}$ has no loops, Corollary B coincides with \cite[Theorem 10]{PY}. In this case,
if $M$ is an indecomposable direct summand of $T$, then its Auslander--Reiten $4$-angle is $M\rightarrow B_{M^*}\rightarrow B_M\rightarrow M\rightarrow \Sigma^2 M$, where $B_M,\,B_{M^*}$ are defined as in \cite[p. 1444]{PY}.
However, if we do not assume that the Auslander--Reiten quiver of $\mathcal{T}$ has no loops, some of the Auslander--Reiten $4$-angles in $\mathcal{T}$ do not come from Palu's exchange triangles and Corollary B and \cite[Theorem 10]{PY} are different.
\end{remark}

\section{A $\Sigma^d$-stable, $d$-cluster tilting subcategory $\mathcal{S}\subseteq \mathcal{C}$}\label{section_thm_S}
\begin{setup}
Let $d\geq 1$ be an integer and $\mathcal{S}\subseteq \mathcal{C}$ be a $d$-cluster tilting subcategory. Assume also that $\Sigma^d \mathcal{S}=\mathcal{S}$. Then, by \cite[Theorem 1]{GKO}, we have that $\mathcal{S}$ is a $(d+2)$-angulated category with $d$-suspension functor $\Sigma^d$.
\end{setup}

\begin{lemma}\label{lemma_F_S_zero}
Consider a $(d+2)$-angle in $\mathcal{S}$ of the form
\begin{align*}
    \xymatrix{
    S_{d+1}\ar[r]& S_{d}\ar[r] &\cdots\ar[r]&S_2\ar[r]& S_1\ar[r]&S_0\ar[r]&\Sigma^d S_{d+1}.
    }
\end{align*}
By \cite[theorem 1]{GKO}, it corresponds to a tower of triangles in $\mathcal{C}$:
\begin{align}\label{diagram_tower_S}
\begin{gathered}
\xymatrix@!0 @C=3em @R=3em{
& S_{d}\ar[rd]^{\sigma_d}\ar[rr] && S_{d-1}\ar[rd]^{\sigma_{d-1}}\ar[r]&&\cdots&\ar[r]& S_3\ar[rr]\ar[rd]^{\sigma_3} &&S_2\ar[rd]^{\sigma_2}\ar[rr]&& S_1\ar[rd]
\\
S_{d+1}\ar[ru]&& Y_{d-1}\ar@{~>}[ll]^{\eta_{d-1}}\ar[ru]&& Y_{d-2}\ar@{~>}[ll]^{\eta_{d-2}}&\cdots& Y_3\ar[ru]&& Y_2 \ar@{~>}[ll]^{\eta_2}\ar[ru]&& Y_1\ar@{~>}[ll]^{\eta_1}\ar[ru]&& S_0.\ar@{~>}[ll]^{\eta_0}
}
\end{gathered}
\end{align}
This tower satisfies $F_\mathcal{S}(\eta_l)=0$ for any integer $1\leq l\leq d-1$.
\end{lemma}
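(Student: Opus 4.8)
The plan is to prove the apparently stronger fact that, for each $1\le l\le d-1$, the \emph{target} functor of $F_{\mathcal{S}}(\eta_l)$ already vanishes; this deliberately avoids the ``factor through an $\mathcal{S}$-cover'' argument used for Lemma~\ref{lemma_F_xi_zero}, which here would be circular, since knowing that $\sigma_{l+1}$ is an $\mathcal{S}$-precover is itself equivalent to the desired conclusion. First I would note that the range $1\le l\le d-1$ is empty when $d=1$, so I may assume $d\ge 2$. Reading off the tower~(\ref{diagram_tower_S}), for each such $l$ the wavy arrow $\eta_l$ is the connecting morphism of a triangle $Y_{l+1}\to S_{l+1}\xrightarrow{\sigma_{l+1}} Y_l\xrightarrow{\eta_l}\Sigma Y_{l+1}$ in $\mathcal{C}$, under the convention $Y_d:=S_{d+1}$ read from the leftmost triangle $S_{d+1}\to S_d\to Y_{d-1}\to\Sigma S_{d+1}$. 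Since $F_{\mathcal{S}}(\eta_l)$ is by definition the induced map $\mathcal{C}(-,Y_l)|_{\mathcal{S}}\to\mathcal{C}(-,\Sigma Y_{l+1})|_{\mathcal{S}}$, it suffices to show that its codomain is the zero functor, i.e. that $\mathcal{C}(\mathcal{S},\Sigma Y_{l+1})=\Ext^{1}_{\mathcal{C}}(\mathcal{S},Y_{l+1})=0$.

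The key step is a dimension shift along the tower. For $S\in\mathcal{S}$, applying $\mathcal{C}(S,-)$ to the triangle $Y_{k+1}\to S_{k+1}\to Y_k\to\Sigma Y_{k+1}$ gives a long exact sequence in which the map $\Ext^{j}_{\mathcal{C}}(S,Y_k)\to\Ext^{j+1}_{\mathcal{C}}(S,Y_{k+1})$ is flanked by the groups $\Ext^{j}_{\mathcal{C}}(S,S_{k+1})$ and $\Ext^{j+1}_{\mathcal{C}}(S,S_{k+1})$. As $S_{k+1}\in\mathcal{S}$, Definition~\ref{defn_m-ct} forces both of these to vanish whenever $1\le j\le d-2$, yielding isomorphisms $\Ext^{j}_{\mathcal{C}}(\mathcal{S},Y_k)\cong\Ext^{j+1}_{\mathcal{C}}(\mathcal{S},Y_{k+1})$ in that range. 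Iterating these from index $k$ up to index $d$ (where $Y_d=S_{d+1}$) then identifies $\Ext^{1}_{\mathcal{C}}(\mathcal{S},Y_k)\cong\Ext^{d-k+1}_{\mathcal{C}}(\mathcal{S},S_{d+1})$ for every $2\le k\le d$.

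To finish I would set $k=l+1$ and observe that $1\le d-k+1=d-l\le d-1$ for $1\le l\le d-1$, so the $d$-cluster tilting property gives $\Ext^{d-l}_{\mathcal{C}}(\mathcal{S},S_{d+1})=0$, whence $\Ext^{1}_{\mathcal{C}}(\mathcal{S},Y_{l+1})=0$ and $F_{\mathcal{S}}(\eta_l)=0$, as required. I expect the only genuine difficulty to be the index bookkeeping: checking that both flanking $\Ext$-groups of the middle object $S_{k+1}$ vanish (which confines the shift to $1\le j\le d-2$), that the iteration terminates exactly at $Y_d=S_{d+1}$, that the resulting degree $d-l$ stays inside $\{1,\dots,d-1\}$, and that the boundary case $l=d-1$ (where no shifting is needed, since $Y_d=S_{d+1}$ directly) together with the correct reading of the triangles and of the direction of $\eta_l$ from~(\ref{diagram_tower_S}) are handled cleanly.
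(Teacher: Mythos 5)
Your proposal is correct and takes essentially the same route as the paper: both reduce the claim to showing that the target functor $F_{\mathcal{S}}(\Sigma Y_{l+1})$ is zero, and both establish this by moving along the triangles of the tower and using the vanishing $\Ext^{1\dots d-1}_{\mathcal{C}}(\mathcal{S},\mathcal{S})=0$ to push the problem back to $Y_d=S_{d+1}\in\mathcal{S}$. The only difference is bookkeeping: the paper runs an induction proving $F_{\mathcal{S}}(\Sigma^i Y_{d-j})=0$ over a triangular range of indices, whereas you chain the isomorphisms $\Ext^{j}_{\mathcal{C}}(\mathcal{S},Y_k)\cong\Ext^{j+1}_{\mathcal{C}}(\mathcal{S},Y_{k+1})$ down to $\Ext^{d-l}_{\mathcal{C}}(\mathcal{S},S_{d+1})=0$, which is the same computation.
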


\begin{proof}
Let $Y_d:=S_{d+1}$. In order to prove that $F_{\mathcal{S}}(\eta_l)=0$, we prove that its target $F_{\mathcal{S}}(\Sigma Y_{l+1})$ is zero. More generally, we prove that $F_{\mathcal{S}}(\Sigma^i Y_{d-j})=0$ for any integers $0\leq j\leq d-2$ and $1\leq i\leq d-j-1$.

First note that for $j=0$, we have $F_{\mathcal{S}}(\Sigma^i Y_d)=0$ for any $1\leq i\leq d-1$, since $Y_d=S_{d+1}\in\mathcal{S}$.

Suppose that for some $0\leq j\leq d-2$, we proved that
\begin{align}
    F_{\mathcal{S}}(\Sigma^i Y_{d-k})=0\,\, \text{ for any }\,\, 0\leq k\leq j\,\, \text{ and }\,\, 1\leq i\leq d-k-1. \tag{\dag}
\end{align}
If $j=d-2$, then we are done. So assume $j\leq d-3$. We need to prove that
\begin{align*}
    F_{\mathcal{S}}(\Sigma^{i'} Y_{d-(j+1)})=0 \,\, \text{ for any }\,\, 1\leq i'\leq d-j-2.
\end{align*}
Given any $1\leq i'\leq d-j-2$, the triangle $S_{d-j}\rightarrow Y_{d-(j+1)}\rightarrow \Sigma Y_{d-j}\rightarrow \Sigma S_{d-j}$ induces the exact sequence:
\begin{align*}
    F_{\mathcal{S}}(\Sigma^{i'} S_{d-j})\rightarrow F_{\mathcal{S}}(\Sigma^{i'} Y_{d-(j+1)})\rightarrow F_{\mathcal{S}}(\Sigma^{i'+1} Y_{d-j}).
\end{align*}
Note that $F_{\mathcal{S}}(\Sigma^{i'} S_{d-j})=0$ since $S_{d-j}\in\mathcal{S}$ and $1\leq i'<d-1$. Moreover, $F_{\mathcal{S}}(\Sigma^{i'+1} Y_{d-j})=0$ by ($\dag$) with $k=j$ and as $1<i'+1\leq d-j-1$. Hence $F_{\mathcal{S}}(\Sigma^{i'} Y_{d-(j+1)})=0$ for any $1\leq i'\leq d-j-2$ as we wished to show.
\end{proof}

\begin{remark}\label{remark_no_covers}
Consider a $(d+2)$-angle in $\mathcal{S}$ of the form
\begin{align*}
    \xymatrix{
    S_{d+1}\ar[r]& S_{d}\ar[r] &\cdots\ar[r]&S_2\ar[r]& S_1\ar[r]&S_0\ar[r]&\Sigma^d S_{d+1}.
    }
\end{align*}
By \cite[Theorem 1]{GKO}, it corresponds to a tower of triangles in $\mathcal{C}$ of the form (\ref{diagram_tower_S}). By Lemma \ref{lemma_F_S_zero}, we have that $F_\mathcal{S}(\eta_l)=0$ for any integer $1\leq l\leq d-1$. Hence $\sigma_{l+1}:S_{l+1}\rightarrow Y_{l}$ is an $\mathcal{S}$-precover. If it is not right minimal, then $S_{l+1}\cong \overline{S}_{l+1}\oplus S_{l+1}'$ and $\sigma_{l+1}$ is isomorphic to a morphism of the form $(0,\sigma_{l+1}'):\overline{S}_{l+1}\oplus S_{l+1}'\rightarrow Y_l$, where $\sigma_{l+1}'$ is an $\mathcal{S}$-cover of $Y_l$. It is then easy to check that $\overline{S}_{l+1}$ appears as a summand of $S_{l+2}$ and so it gets cancelled when computing $[S_{l+1}]-[S_{l+2}]$.
Hence, the morphisms $\sigma_l$ do not need to be $\mathcal{S}$-covers for using the tower (\ref{diagram_tower_S}) to compute $\text{index}_{\mathcal{S}}(Y_1)$. In other words,
\begin{align*}
    \text{index}_{\mathcal{S}}(Y_1)=\sum_{i=2}^{d+1}(-1)^{i}[S_i].
\end{align*}
\end{remark}

\begin{proposition}\label{proposition_im_theta_S}
When $d\geq2$, we have that
\begin{align*}
    \im\theta_{\mathcal{S}}=\Bigg\langle \sum_{i=0}^{d+1} (-1)^i [S_i]\mid S_{d+1}\rightarrow\dots\rightarrow S_0\rightarrow\Sigma^d S_{d+1} \text{ is a $(d+2)$-angle in } \mathcal{S} \Bigg\rangle.
\end{align*}
\end{proposition}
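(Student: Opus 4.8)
The plan is to prove the two inclusions separately, each reducing to a single application of the index formula of Proposition \ref{thm_JT_45} to the rightmost triangle of a tower of the form (\ref{diagram_tower_S}). For the inclusion $\supseteq$, I would start from an arbitrary $(d+2)$-angle $S_{d+1}\to\dots\to S_0\to\Sigma^d S_{d+1}$ and pass to its associated tower (\ref{diagram_tower_S}), whose rightmost triangle is $Y_1\to S_1\xrightarrow{\sigma_1} S_0\xrightarrow{\eta_0}\Sigma Y_1$. Applying Proposition \ref{thm_JT_45} to this triangle, and using $\text{index}_{\mathcal{S}}(S_0)=[S_0]$ and $\text{index}_{\mathcal{S}}(S_1)=[S_1]$ since $S_0,S_1\in\mathcal{S}$, gives $\theta_{\mathcal{S}}(\im F_{\mathcal{S}}(\eta_0))=\text{index}_{\mathcal{S}}(Y_1)+[S_0]-[S_1]$. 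Remark \ref{remark_no_covers} evaluates $\text{index}_{\mathcal{S}}(Y_1)=\sum_{i=2}^{d+1}(-1)^i[S_i]$, so that $\theta_{\mathcal{S}}(\im F_{\mathcal{S}}(\eta_0))=\sum_{i=0}^{d+1}(-1)^i[S_i]$. Since $\mmod\mathcal{S}$ is abelian, the image $\im F_{\mathcal{S}}(\eta_0)$ is coherent and defines a class in $K_0(\mmod\mathcal{S})$, so this alternating sum lies in $\im\theta_{\mathcal{S}}$.

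For the inclusion $\subseteq$, I would use that $\im\theta_{\mathcal{S}}$ is generated by the elements $\theta_{\mathcal{S}}([G])$ with $G\in\mmod\mathcal{S}$, and show each one is such an alternating sum. Fixing a projective presentation $\mathcal{S}(-,S_1)\xrightarrow{\mathcal{S}(-,p)}\mathcal{S}(-,S_0)\to G\to 0$, so that $G=\Coker(\mathcal{S}(-,p))$ with $p\colon S_1\to S_0$ a morphism of $\mathcal{S}$, the key point is to realise $p$ as the last internal morphism of a $(d+2)$-angle. By the existence axiom of the $(d+2)$-angulated structure on $\mathcal{S}$ (\cite[Theorem 1]{GKO}), the map $p$ extends to a $(d+2)$-angle with $p$ in first position, and applying the inverse rotation axiom $d$ times moves $p$ into the position $S_1\xrightarrow{p}S_0$ immediately before $\Sigma^d$; because $\Sigma^d\mathcal{S}=\mathcal{S}$, every object produced by these rotations stays in $\mathcal{S}$. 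For the resulting angle, the last triangle of its tower reads $Y_1\to S_1\xrightarrow{p}S_0\xrightarrow{\eta_0}\Sigma Y_1$, and exactness of the homological functor $F_{\mathcal{S}}$ gives $\im F_{\mathcal{S}}(\eta_0)\cong\Coker(F_{\mathcal{S}}(p))=\Coker(\mathcal{S}(-,p))=G$. The computation of the previous paragraph then yields $\theta_{\mathcal{S}}([G])=\sum_{i=0}^{d+1}(-1)^i[S_i]$, a generator of the right-hand side.

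I expect the rotation step to be the main obstacle: I must check that an arbitrary morphism of $\mathcal{S}$ can be placed as the final internal arrow of a $(d+2)$-angle and, in doing so, verify that repeated inverse rotation keeps all objects inside $\mathcal{S}$ via $\Sigma^d\mathcal{S}=\mathcal{S}$. The remaining points are routine: identifying $\im F_{\mathcal{S}}(\eta_0)$ with the prescribed cokernel using exactness of $F_{\mathcal{S}}$ together with the equivalence $(\mathcal{S}*\Sigma\mathcal{S})/[\Sigma\mathcal{S}]\cong\mmod\mathcal{S}$ from \cite[Proposition 6.2(3)]{IY}, and bookkeeping the signs so that $[S_0]-[S_1]+\sum_{i=2}^{d+1}(-1)^i[S_i]$ assembles to $\sum_{i=0}^{d+1}(-1)^i[S_i]$.
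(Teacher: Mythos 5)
Your proof is correct, and its second half takes a genuinely different route from the paper's. The inclusion $\supseteq$ is exactly the paper's argument: pass from the $(d+2)$-angle to its tower, apply Proposition \ref{thm_JT_45} to the rightmost triangle $Y_1\rightarrow S_1\rightarrow S_0\xrightarrow{\eta_0}\Sigma Y_1$, and evaluate $\text{index}_{\mathcal{S}}(Y_1)$ by Remark \ref{remark_no_covers}. For the inclusion $\subseteq$, however, the paper parametrises the generators of $\im\theta_{\mathcal{S}}$ by objects $Y\in\mathcal{S}*\Sigma\mathcal{S}$ (using that $\theta_{\mathcal{S}}$ is defined on the classes $[F_{\mathcal{S}}Y]$, which generate $K_0(\mmod\mathcal{S})$), rotates the defining triangle of $Y$ to $Y_1\rightarrow S_1\rightarrow S_0\rightarrow\Sigma Y_1$, and then completes it to a tower inside the triangulated category $\mathcal{C}$ via \cite[Corollary 3.3]{IY}, invoking \cite[Theorem 1]{GKO} only at the end to recognise the tower as a $(d+2)$-angle; surjectivity of $F_{\mathcal{S}}(\eta_0)$ (from $F_{\mathcal{S}}(\Sigma S_1)=0$) identifies the relevant module as $F_{\mathcal{S}}(Y)$. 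You instead parametrise generators by coherent modules $G$ with a chosen projective presentation $\mathcal{S}(-,p)$, and build the required $(d+2)$-angle intrinsically in $\mathcal{S}$ using the Geiss--Keller--Oppermann axioms (existence plus $d$ inverse rotations, which stay in $\mathcal{S}$ automatically since $\Sigma^d$ is an automorphism of the $(d+2)$-angulated category), passing back through \cite[Theorem 1]{GKO} to get a tower on which Proposition \ref{thm_JT_45} can be applied, and identifying $\im F_{\mathcal{S}}(\eta_0)\cong\Coker F_{\mathcal{S}}(p)=G$ by exactness. Both routes are sound and of comparable length: the paper's construction is more economical in that the tower it needs is produced directly by the cluster-tilting property of $\mathcal{S}$ in $\mathcal{C}$, while yours has the merit of making the generation statement completely transparent (every $[G]$, not just every $[F_{\mathcal{S}}Y]$, is hit) and of isolating exactly where the hypothesis $\Sigma^d\mathcal{S}=\mathcal{S}$ and the angulated axioms enter; it would also adapt to settings where the angulated structure is known abstractly rather than via towers, except that Proposition \ref{thm_JT_45} itself still forces a return to triangles in $\mathcal{C}$.
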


\begin{proof}
We prove this by proving that the two inclusions hold.

($\subseteq$). Given any $Y\in\mathcal{S}*\Sigma\mathcal{S}$, there is a triangle in $\mathcal{C}$ of the form
\begin{align*}
    \xymatrix{
    S_0\ar[r]^-{\eta_0}& Y\ar[r]&\Sigma S_1\ar[r]&\Sigma S_0,
    }
\end{align*}
where $S_0,\,S_1\in\mathcal{S}$. Letting $Y_1:=\Sigma^{-1}Y\in\mathcal{C}$, we obtain a triangle in $\mathcal{C}$ of the form
\begin{align*}
    \xymatrix{
   \Delta:& Y_1\ar[r]& S_1\ar[r]&S_0\ar[r]^-{\eta_0}& \Sigma Y_1.
    }
\end{align*}
Since $\mathcal{S}$ is $d$-cluster tilting in $\mathcal{C}$, by \cite[corollary 3.3]{IY}, we can construct a tower of triangles in $\mathcal{C}$ of the form
\begin{align*}
\begin{gathered}
\xymatrix@!0 @C=3em @R=3em{
& S_{d}\ar[rd]\ar[rr] && S_{d-1}\ar[rd]\ar[r]&&\cdots&\ar[r]& S_3\ar[rr]\ar[rd] &&S_2\ar[rd]
\\
S_{d+1}\ar[ru]&& Y_{d-1}\ar@{~>}[ll]^{\eta_{d-1}}\ar[ru]&& Y_{d-2}\ar@{~>}[ll]^{\eta_{d-2}}&\cdots& Y_3\ar[ru]&& Y_2 \ar@{~>}[ll]^{\eta_2}\ar[ru]&& Y_1,\ar@{~>}[ll]^{\eta_1}
}
\end{gathered}
\end{align*}
where $S_2,\dots,\,S_{d+1}$ are in $\mathcal{S}$. Putting this together with triangle $\Delta$, we obtain the tower of triangles (\ref{diagram_tower_S}) in $\mathcal{C}$,
which corresponds to the $(d+2)$-angle in $\mathcal{S}$:
\begin{align*}
    \xymatrix{
    S_{d+1}\ar[r]& S_{d}\ar[r] &\cdots\ar[r]&S_2\ar[r]& S_1\ar[r]&S_0\ar[r]&\Sigma^d S_{d+1}.
    }
\end{align*}
By Proposition \ref{thm_JT_45}, we have that in $K_0^{sp}(\mathcal{S})$:
\begin{align*}
    [S_1]=\text{index}_{\mathcal{S}}(Y_1)+[S_0]-\theta_{\mathcal{S}}([\im F_{\mathcal{S}}(\eta_0)]).
\end{align*}
Moreover, since $F_\mathcal{S}(\Sigma S_1)=0$, we have that $F_{\mathcal{S}}(\eta_0)$ is surjective and so
\begin{align*}
    [S_1]=\text{index}_{\mathcal{S}}(Y_1)+[S_0]-\theta_{\mathcal{S}}([ F_{\mathcal{S}}(\Sigma Y_1)]).
\end{align*}
We have that $\text{index}_{\mathcal{S}}(Y_1)=\sum_{i=2}^{d+1}(-1)^{i}[S_i]$ and so
\begin{align*}
    \sum_{i=0}^{d+1}(-1)^i[S_i]=\theta_{\mathcal{S}}([ F_{\mathcal{S}}(\Sigma Y_1)])=\theta_{\mathcal{S}}([ F_{\mathcal{S}}(Y)]).
\end{align*}

($\supseteq$). Given a $(d+2)$-angle in $\mathcal{S}$ of the form
\begin{align*}
    \xymatrix{
    S_{d+1}\ar[r]& S_{d}\ar[r] &\cdots\ar[r]&S_2\ar[r]& S_1\ar[r]&S_0\ar[r]&\Sigma^d S_{d+1},
    }
\end{align*}
consider the corresponding tower (\ref{diagram_tower_S}) of triangles in $\mathcal{C}$.
By Remark \ref{remark_no_covers}, we have that $\text{index}_{\mathcal{S}}(Y_1)=\sum_{i=2}^{d+1}(-1)^{i}[S_i]$. Using Proposition \ref{thm_JT_45} on the rightmost triangle in tower (\ref{diagram_tower_S}), namely $Y_1\rightarrow S_1\rightarrow S_0\xrightarrow{\eta_0} \Sigma Y_1$, we conclude that
\begin{align*}
    \sum_{i=0}^{d+1}(-1)^i[S_i]=\theta_{\mathcal{S}}([ \im F_{\mathcal{S}}(\eta_0)])\in\im\theta_{\mathcal{S}}.
\end{align*}
\end{proof}

\begin{remark}\label{remark_imS_K0}
By Proposition \ref{proposition_im_theta_S} and Definition \ref{defn_GG_S}, for $d\geq 2$, we have that
\begin{align*}
     K_0(\mathcal{S})= K_0^{sp}(\mathcal{S})/\im\theta_{\mathcal{S}}.
\end{align*}
Suppose now that Ind$\,\mathcal{S}$ is locally bounded and $\mathcal{C}$ has a Serre functor $\mathbb{S}$.
By \cite[Theorem 3.10]{IY}, for every indecomposable $M$ in $\mathcal{S}$, there is an Auslander--Reiten $(d+2)$-angle in $\mathcal{S}$ of the form
\begin{align}\label{diagram_AR_remark}
    \xymatrix{
    \mathbb{S}_d(M)\ar[r]& S_{d-1}\ar[r] &\cdots\ar[r]&S_1\ar[r]& S_0\ar[r]&M\ar[r]&\Sigma^d \mathbb{S}_d(M),
    }
\end{align}
where $\mathbb{S}_d=\mathbb{S}\circ\Sigma^{-d}$.
Then, by Remark \ref{remark_im_theta_gen}, we have that
\begin{align*}
    K_0(\mathcal{S})= K_0^{sp}(\mathcal{S})\Bigg/\Bigg\langle
    \xymatrix{ 
    -[M]+(-1)^d[\mathbb{S}_d(M)]+\displaystyle\sum_{i=0}^{d-1} (-1)^i [S_i]
   \,\,\, \Bigg\vert \,\,\,{\begin{matrix}M\in\rm{Ind }\,\mathcal{S} \text{ with Auslander--Reiten }\\ (d+2)\text{-angle } (\ref{diagram_AR_remark})\end{matrix}}}  \Bigg\rangle.
\end{align*}
This result agrees with \cite[Theorem 3.7]{ZP}. Note that there are two differences between ours and Zhou's result. The first one is that we do not assume that $d$ is odd, and the second one is that Zhou's  $(d+2)$-angulated category is not assumed to raise as a $d$-cluster tilting subcategory of a triangulated category.
\end{remark}

\begin{lemma}\label{lemma_a_b_exist}
When $d\geq 2$, there is a morphism $g_{\mathcal{S}}: K_0(\mathcal{S})=K_0^{sp}(\mathcal{S})/\im\theta_{\mathcal{S}}\rightarrow K_0(\mathcal{C})$ such that
    \begin{align*}
        g_{\mathcal{S}}\circ\pi_{\mathcal{S}}=\pi_{\mathcal{C}}\circ j_{\mathcal{S}}.
    \end{align*}
\end{lemma}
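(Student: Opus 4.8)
The plan is to imitate the proof of Lemma \ref{lemma_f_g_exist}, which settles the entirely analogous statement for the $n$-cluster tilting subcategory $\mathcal{T}$. Working in diagram (\ref{diagram_morphisms_f_U}) with $\mathcal{U}=\mathcal{S}$, the universal property of the quotient map $\pi_{\mathcal{S}}$ shows that a homomorphism $g_{\mathcal{S}}$ with $g_{\mathcal{S}}\circ\pi_{\mathcal{S}}=\pi_{\mathcal{C}}\circ j_{\mathcal{S}}$ exists if and only if $\pi_{\mathcal{C}}\circ j_{\mathcal{S}}$ annihilates $\Ker\pi_{\mathcal{S}}$, that is, $\pi_{\mathcal{C}}\circ j_{\mathcal{S}}\circ\iota_{\mathcal{S}}=0$. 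So everything reduces to checking this vanishing on generators.

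Next I would pin down the kernel. Since $d\geq 2$, Remark \ref{remark_imS_K0} gives $\Ker\pi_{\mathcal{S}}=\im\theta_{\mathcal{S}}$, and Proposition \ref{proposition_im_theta_S} describes $\im\theta_{\mathcal{S}}$ as generated by the alternating sums $\sum_{i=0}^{d+1}(-1)^i[S_i]$, one for each $(d+2)$-angle $S_{d+1}\to\cdots\to S_0\to\Sigma^d S_{d+1}$ in $\mathcal{S}$. As $j_{\mathcal{S}}$ is the inclusion sending a class $[S]\in K_0^{sp}(\mathcal{S})$ to the same class in $K_0^{sp}(\mathcal{C})$, it is enough to prove that each such generator maps to $0$ under $\pi_{\mathcal{C}}$, i.e. vanishes in $K_0(\mathcal{C})$.

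The heart of the argument is the computation of this alternating sum in $K_0(\mathcal{C})$ by means of the tower of triangles (\ref{diagram_tower_S}) attached to the $(d+2)$-angle. Adopting the conventions $Y_0:=S_0$ and $Y_d:=S_{d+1}$, the oriented triangles of the tower are $Y_k\to S_k\to Y_{k-1}\to\Sigma Y_k$ for $1\leq k\leq d$, each of which imposes the relation $[S_k]=[Y_k]+[Y_{k-1}]$ in $K_0(\mathcal{C})$. Substituting these into $\sum_{i=0}^{d+1}(-1)^i[S_i]$, the interior terms $[Y_1],\dots,[Y_{d-1}]$ cancel in pairs while the endpoint contributions recombine, using $Y_0=S_0$ and $Y_d=S_{d+1}$, to leave $0$. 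Thus $\pi_{\mathcal{C}}(j_{\mathcal{S}}(\sum_{i=0}^{d+1}(-1)^i[S_i]))=0$ for every generator, giving $\pi_{\mathcal{C}}\circ j_{\mathcal{S}}\circ\iota_{\mathcal{S}}=0$ and hence the desired $g_{\mathcal{S}}$.

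I expect the only delicate point to be the sign bookkeeping in this telescoping, together with the correct reading of the boundary triangles (the leftmost involving $S_{d+1}=Y_d$ and the rightmost involving $S_0=Y_0$). Beyond that verification there is no real obstacle: all structural ingredients, namely the identification of $\Ker\pi_{\mathcal{S}}$ with $\im\theta_{\mathcal{S}}$ and the passage from a $(d+2)$-angle to its tower of triangles in $\mathcal{C}$, are already in place, and the existence of $g_{\mathcal{S}}$ then follows formally exactly as in Lemma \ref{lemma_f_g_exist}.
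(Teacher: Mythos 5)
Your proposal is correct and follows essentially the same route as the paper's own proof: reduce existence of $g_{\mathcal{S}}$ to the vanishing $\pi_{\mathcal{C}}\circ j_{\mathcal{S}}\circ\iota_{\mathcal{S}}=0$, use Proposition \ref{proposition_im_theta_S} to identify the generators of $\Ker\pi_{\mathcal{S}}$ with the alternating sums coming from $(d+2)$-angles, and then kill each generator in $K_0(\mathcal{C})$ by telescoping through the relations $[S_k]=[Y_k]+[Y_{k-1}]$ given by the triangles in the tower (\ref{diagram_tower_S}). Your sign bookkeeping, with the conventions $Y_0=S_0$ and $Y_d=S_{d+1}$, matches the paper's computation exactly.
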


\begin{proof}
Consider diagram (\ref{diagram_morphisms_f_U}) with $\mathcal{U}=\mathcal{S}$. Note that a morphism $g_{\mathcal{S}}$ with the desired property exists if and only if $\pi_{\mathcal{C}}\circ j_{\mathcal{S}}\circ\iota_{\mathcal{S}}=0$. Note that, by Proposition \ref{proposition_im_theta_S}, the group $\ker\pi_\mathcal{S}$ is generated by elements of the form
\begin{align*}
    \sum_{i=0}^{d+1} (-1)^i[S_i],
\end{align*}
for some $(d+2)$-angle in $\mathcal{S}$ of the form $S_{d+1}\rightarrow\dots\rightarrow S_0\rightarrow\Sigma^d S_{d+1}$. Such a $(d+2)$-angle corresponds to a tower of triangles in $\mathcal{C}$ of the form (\ref{diagram_tower_S}). Then, we have
\begin{align*}
    \pi_{\mathcal{C}}\circ j_{\mathcal{S}}\circ \iota_{\mathcal{S}} \Big(\sum_{i=0}^{d+1} (-1)^i[S_i]\Big)&= [S_0]-([S_0]+[Y_1])+\cdots+(-1)^{d}([Y_{d-1}]+[S_{d+1}])+(-1)^{d+1}[S_{d+1}]\\&=0,
\end{align*}
where we have used the relations in $K_0(\mathcal{C})$ corresponding to the triangles in the tower (\ref{diagram_tower_S}), for instance $[S_1]=[S_0]+[Y_1]$. Hence $\pi_{\mathcal{C}}\circ j_{\mathcal{S}}\circ \iota_{\mathcal{C}} =0$ as desired.
\end{proof}

We now prove Theorem C of the introduction.
\begin{proof}[Proof of Theorem C]
If $d=1$, then $\mathcal{S}=\mathcal{C}$ and the result is clear. So assume $d\geq 2$.
By Lemma \ref{lemma_a_b_exist}, there exists a homomorphism $g_{\mathcal{S}}: K_0^{sp}(\mathcal{S})/\im\theta_{\mathcal{S}}\rightarrow K_0(\mathcal{C})$ such that $g_{\mathcal{S}}\circ\pi_{\mathcal{S}}=\pi_{\mathcal{C}}\circ j_{\mathcal{S}}$. Then, by Proposition \ref{prop_if_g_exists}, we have that $K_0(\mathcal{C})\cong K_0^{sp}(\mathcal{S})/\im\theta_{\mathcal{S}}$ and, by Remark \ref{remark_imS_K0}, this completes the proof.
\end{proof}
\section{The case when $n=2d$ and $\mathcal{T}\subseteq\mathcal{S}\subseteq \mathcal{C}$}\label{section_coro}
\begin{setup}\label{setup_T_S}
Let $d\geq 1$ be an integer and $n=2d$. Assume that $\mathcal{C}$ has a Serre functor $\mathbb{S}$ and $\mathcal{T}\subseteq\mathcal{S}\subseteq \mathcal{C}$ are such that $\mathcal{T}$ is an $n$-cluster tilting subcategory in $\mathcal{C}$ such that Ind$\,\mathcal{T}$ is locally bounded
and $\mathcal{S}$ is a $d$-cluster tilting subcategory in $\mathcal{C}$ such that $\Sigma^d\mathcal{S}=\mathcal{S}$. Then $\mathcal{S}$ is a $(d+2)$-angulated category with $d$-suspension $\Sigma^d$.
\end{setup}
 
\begin{defn}[{\cite[Definition 5.3]{OT}}]
A functorially finite, full subcategory $\mathcal{T}\subseteq\mathcal{S}$ is an \textit{Oppermann--Thomas cluster tilting subcategory} if:
\begin{enumerate}[label=(\alph*)]
    \item $\mathcal{S}(\mathcal{T}, \Sigma^d \mathcal{T})=0$,
    \item for each $S'\in\mathcal{S}$, there is a $(d+2)$-angle $T_d\rightarrow \dots\rightarrow T_0\rightarrow S'\rightarrow \Sigma^d T_d$ in $\mathcal{S}$ with $T_i\in\mathcal{T}$.
\end{enumerate}
\end{defn}

\begin{remark}
The subcategory $\mathcal{T}\subseteq\mathcal{S}$ from Setup \ref{setup_T_S} is an Oppermann--Thomas cluster tilting subcategory by \cite[Theorem 5.25]{OT}. Note that in \cite[Definition 5.3 and Theorem 5.25]{OT} $\mathcal{T}$ is assumed to have finitely many indecomposables up to isomorphism. We are not restricting to this case and it can be easily checked that the proof of \cite[Theorem 5.25]{OT} still goes through without this restriction.
\end{remark}


\begin{proof}[Proof of Theorem D]
Theorem D of the introduction then follows by combining Theorem A and Theorem C, and noting that $[\mathbb{S}_n(M)]=(-1)^n[\mathbb{S}_n(M)]$ since $n=2d$ is even.
\end{proof}


Adding the assumption that $\mathcal{C}$ is $n$-Calabi--Yau, we prove Corollary E of the introduction.
\begin{proof}[Proof of Corollary E]
Since $\mathcal{C}$ is $n$-Calabi--Yau, it has Serre functor $\mathbb{S}=\Sigma^n$. Then $\mathbb{S}_n$ is the identity functor on $\mathcal{C}$ and the result follows from Theorem D.
\end{proof}

\begin{remark}
When $d=1$, we have that $\mathcal{S}=\mathcal{C}$ is a triangulated category with cluster tilting subcategory $\mathcal{T}$ and, adding the extra assumption that the Auslander--Reiten quiver of $\mathcal{T}$ has no loops, Corollary E becomes \cite[Theorem 10]{PY} by Palu. For higher values of $d$, Corollary E is a higher angulated version of Palu's theorem.
\end{remark}

\section{The Grothendieck group of $\mathcal{C}_q(A_p)$ for $q$ odd}\label{section_eg1}
In this section, we compute the Grothendieck group of the triangulated $q$-cluster category of Dynkin type $A_p$ for $q$ odd. We start by introducing this category, first defined in \cite{TH}, and its geometric model, see \cite{MJ} and \cite{BM} for more details.

Let $q$ and $p$ be positive integers and consider the coordinate system on the translation quiver $\mathbb{Z} A_p$ illustrated in Figure \ref{fig:coordinate_ZAn}.

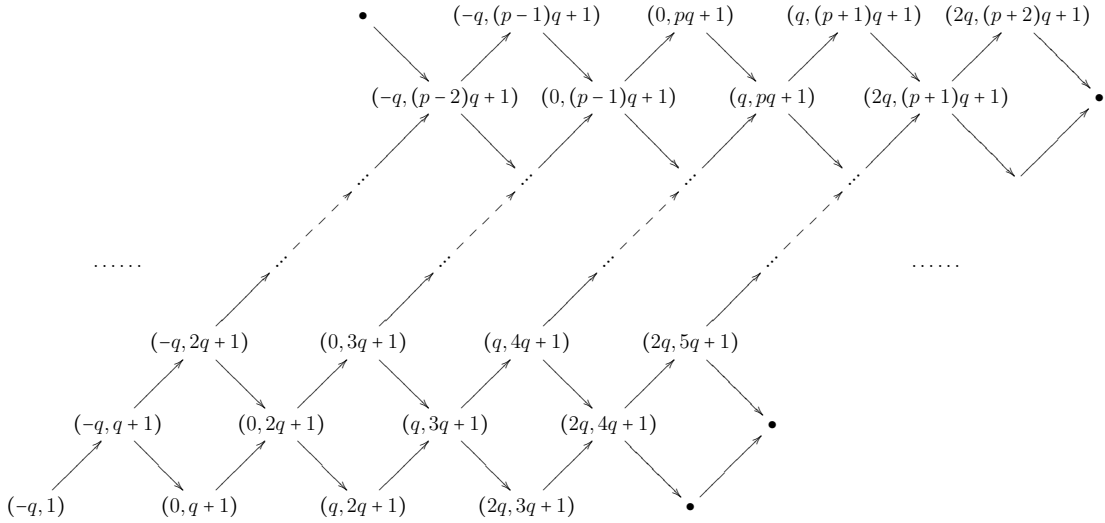
\begin{figure}
\centerline{
\resizebox{0.9\textwidth}{!}{
    \xymatrix @!0@C=4em@R=4em{
     &&&&\bullet\ar[rd]&& (-q,(p-1)q+1)\ar[rd] && (0,pq+1)\ar[rd]&& (q,(p+1)q+1)\ar[rd]&& (2q, (p+2)q+1)\ar[rd]\\ 
    &&&&& (-q,(p-2)q+1)\ar[rd]\ar[ru] && (0,(p-1)q+1)\ar[rd]\ar[ru]&& (q,pq+1)\ar[rd]\ar[ru]&& (2q, (p+1)q+1)\ar[ru]\ar[rd]&&\bullet  \\
    &&&&\udots\ar[ru]&&\udots\ar[ru]&&\udots\ar[ru]&&\udots\ar[ru]&&\ar[ru]\\
    &\dots\dots&&\udots\ar@{-->}[ru]&&\udots\ar@{-->}[ru]&&\udots\ar@{-->}[ru]&&\udots\ar@{-->}[ru]&&\dots\dots\\
    && (-q,2q+1)\ar[ru]\ar[rd]&& (0,3q+1)\ar[ru]\ar[rd]&& (q,4q+1)\ar[ru]\ar[rd]&& (2q, 5q+1)\ar[ru]\ar[rd]&&\\
    &(-q, q+1)\ar[ru]\ar[rd]&& (0,2q+1)\ar[ru]\ar[rd]&& (q,3q+1)\ar[ru]\ar[rd]&& (2q,4q+1)\ar[ru]\ar[rd]&&\bullet\\
    (-q,1)\ar[ru]&& (0,q+1)\ar[ru]&& (q,2q+1)\ar[ru]&& (2q,3q+1)\ar[ru]&& \bullet\ar[ru]
    }}}
    \caption{Coordinate system on $\mathbb{Z} A_p$.}
    \label{fig:coordinate_ZAn}
\end{figure}

\begin{defn}[{\cite[Remark 2.3]{MJ}}]\label{defn_sigma_tau}
Define the following automorphisms on $\mathbb{Z}A_p$:
\begin{align*}
    \Sigma&: \mathbb{Z}A_p\rightarrow \mathbb{Z}A_p, &&(i,j)\mapsto (j-1,i+(p+1)q+1),\\ 
   \tau&:  \mathbb{Z}A_p\rightarrow \mathbb{Z}A_p, &&(i,j)\mapsto (i-q,j-q),
\end{align*}
and let $\tau_{q+1}=\tau\circ\Sigma^{-q}$.
\end{defn}

Note that $(\mathbb{Z} A_p,\tau)$ is a translation quiver in the sense of \cite[Definition 2.2]{MJ}. Hence there exists a \textit{mesh category} associated to it. The objects of this category are the vertices of $\mathbb{Z} A_p$ and the morphisms are linear combinations of compositions of the arrows of $\mathbb{Z} A_p$ subject to the \textit{mesh relations}. For each arrow $\alpha:x\rightarrow y$, let $\sigma (\alpha)$ be the unique arrow $\sigma (\alpha):\tau(y)\rightarrow x$. The mesh relations are given by
\begin{align*}
    \sum_{\alpha:x\rightarrow y} \alpha\sigma(\alpha)=0,
\end{align*}
for each vertex $y$ in $\mathbb{Z}A_p$.

\begin{defn}\label{defn_H+_H-}
Let $a=(rq,sq+1)$ be a vertex in $\mathbb{Z}A_p$, for $r$ and $s$ integers such that $r+1\leq s\leq r+p$. We denote by
\begin{itemize}
    \item $H^+(a)$ the set of vertices of $\mathbb{Z}A_p$ of the form $(iq, jq+1)$ for integers $r\leq i\leq s-1$ and $s\leq j\leq r+p$;
    \item $H^-(a)$  the set of vertices of $\mathbb{Z}A_p$ of the form $(iq, jq+1)$ for integers $s-p\leq i\leq r$ and $r+1\leq j\leq s$.
\end{itemize}
Note that the sets of vertices $H^-(a)$ and $H^+(a)$ are those in the ``hammocks'' spanned from $a$, see Figure \ref{fig:morphisms}.
\end{defn}

\begin{figure}
  \centering
    \begin{tikzpicture}[scale=1.8][H] 
\draw [dotted] (-5,3)--(3,3);
\draw [dotted] (-5,0)--(3,0);

\draw (0,0)--(2,2);
\draw (-1,1)--(1,3);
\draw (-1,1)--(0,0);
\draw (2,2)--(1,3);
\fill[blue!20!white] (0,0) -- (2,2) -- (1,3) -- (-1,1) -- (0,0);

\draw (-2,0)--(-1,1);
\draw (-1,1)--(-3,3);
\draw (-2,0)--(-4,2);
\draw (-2,2)--(0,0);
\draw (-3,3)--(-4,2);
\fill[green!20!white] (-1,1) -- (-3,3) -- (-4,2) -- (-2,0) -- (-1,1);
  
\draw (-1,1)  node{$\scriptstyle \bullet$}; 
\draw (-1,1.1)  node{$\scriptstyle a$};
 

\draw (0,0)  node{$\scriptstyle \bullet$};
\draw (0,-0.2)  node{$\scriptstyle ((s-1)q,\,sq+1)$};

\draw (2,2)  node{$\scriptstyle \bullet$};
\draw (2.8,2)  node{$\scriptstyle ((s-1)q,\,(r+p)q+1)$};

\draw (1,3)  node{$\scriptstyle \bullet$};
\draw (1,3.2)  node{$\scriptstyle (rq,\,(r+p)q+1)$};

\draw (-3,3)  node{$\scriptstyle \bullet$};
\draw (-3,3.2)  node{$\scriptstyle ((s-p)q,\,sq+1)$};

\draw (-4,2)  node{$\scriptstyle \bullet$};
\draw (-4.8,2)  node{$\scriptstyle ((s-p)q,\,(r+1)q+1)$};

\draw (0.6,1.6) node{$\scriptstyle H^+(a)$};
\draw (-2.6,1.6) node{$\scriptstyle H^-(a)$};


\draw (-2,0)  node{$\scriptstyle \bullet$};
\draw (-2,-0.2)  node{$\scriptstyle (rq,\,(r+1)q+1)$};


\end{tikzpicture}
 \caption{The regions $H^+(a)$ and $H^-(a)$ for $a=(rq,sq+1)$.}
    \label{fig:morphisms}
\end{figure}
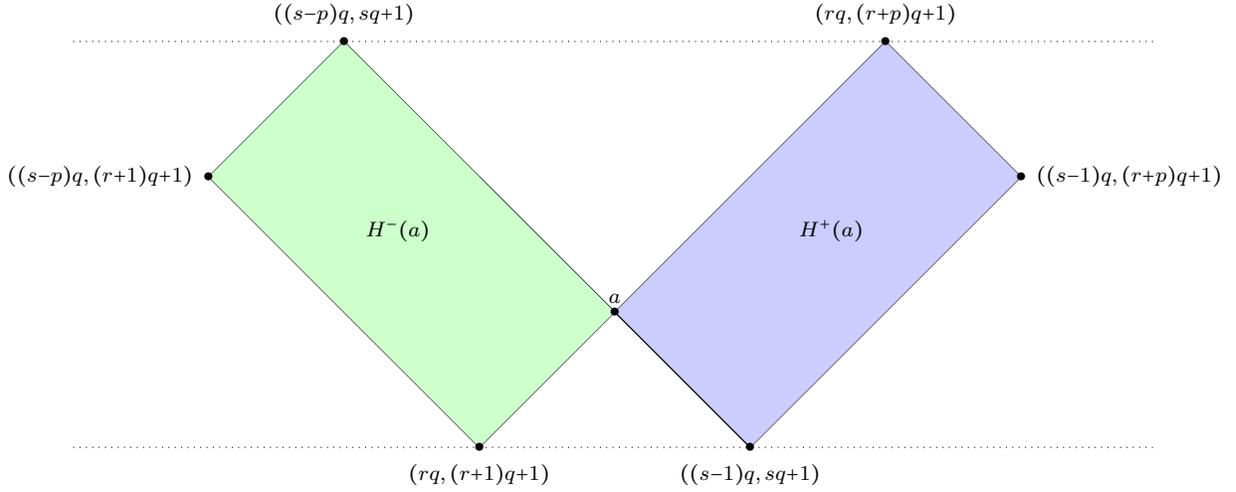

\begin{remark}\label{remark_H+_H-}
By \cite[remark 2.3]{MJ}, the regions $H^-(a)$ and $H^+(a)$ describe the set of vertices from which (respectively, to which) the object $a$ has a non-zero morphism in the mesh category associated to $\mathbb{Z}A_p$.
\end{remark}

\begin{remark}[{\cite[Section 2]{MJ}}]
The Auslander--Reiten quiver of the bounded derived category of finite--dimensional modules $\mathcal{D}^b(k A_p)$ is isomorphic, as a stable translation quiver, to $\mathbb{Z}A_p$.
The automorphisms $\Sigma$ and $\tau$ from Definition \ref{defn_sigma_tau} are the action of the suspension and the Auslander--Reiten translation in $\mathcal{D}^b(k A_p)$ respectively, expressed in terms of the coordinate system from Figure \ref{fig:coordinate_ZAn}. Moreover, the mesh category $k(\mathbb{Z}A_p)$ is equivalent to $\text{Ind}\,\mathcal{D}^b(k A_p)$, \textbf{i.e.} the full subcategory of $\mathcal{D}^b(k A_p)$ whose objects are the indecomposable objects.

The quotient translation quiver $\mathbb{Z}A_p/\langle \tau_{q+1} \rangle$ is obtained by identifying the vertices and arrows of $\mathbb{Z}A_p$ with their $\tau_{q+1}$-shifts. It is the Auslander--Reiten quiver of 
\begin{align*}
    \mathcal{C}_q(A_p):=\mathcal{D}^b(k A_p)/\tau\circ\Sigma^{-q},
\end{align*}
the \textit{triangulated $q$-cluster category of Dynkin type $A_p$}.
Figure \ref{fig:quotient_tau} shows the identification on $\mathbb{Z}A_p$ when $q$ is odd. Note that in this case, the quiver can be drawn on a M\"{o}bius strip.

Moreover, note that $\mathcal{C}_q(A_p)$ is a $(q+1)$-Calabi--Yau category whose $\Hom$-spaces between indecomposables are either zero or one dimensional over $k$.
\end{remark}
\begin{figure}
\centering
\begin{tikzpicture}[scale=2][H]
\draw[red](1,1.6)--(0,0);
\draw(0,0)--(2,0)--(1,1.6);
\draw(1.2,1.6)--(2.2,0)--(3.2,1.6)--(1.2,1.6);
\draw[very thick, dotted](3.4,0.8)--(3.7,0.8);
\draw (4,0)--(6,0)--(5,1.6)--(4,0);
\draw (5.2,1.6)--(6.2,0);
\draw[red](5.4,1.6)--(6.4,0);

\draw (0,0)  node{$\scriptstyle \bullet$};
\draw (-0.35,0)  node{$\scriptstyle (0,q+1)$};

\draw (2,0)  node{$\scriptstyle \bullet$};
\draw (2.2,0)  node{$\scriptstyle \bullet$};
\draw (4,0)  node{$\scriptstyle \bullet$};    
\draw (6,0)  node{$\scriptstyle \bullet$};
\draw (6.2,0)  node{$\scriptstyle \bullet$};

\draw (6.4,0)  node{$\scriptstyle \bullet$};
\draw (7.35,0)  node{$\scriptstyle \tau^{-1}_{q+1}(0,pq+1)\equiv (0,pq+1)$};

\draw (1,1.6)  node{$\scriptstyle \bullet$};
\draw (0.6,1.6)  node{$\scriptstyle (0,pq+1)$};

\draw (1.2,1.6)  node{$\scriptstyle \bullet$};
\draw (3.2,1.6)  node{$\scriptstyle \bullet$};
\draw (5,1.6)  node{$\scriptstyle \bullet$};
\draw (5.2,1.6)  node{$\scriptstyle \bullet$};

\draw (5.4,1.6)  node{$\scriptstyle \bullet$};
\draw (6.25,1.6)  node{$\scriptstyle \tau^{-1}_{q+1}(0,q+1)\equiv (0,q+1)$};

\draw (1,0.8) node{$\scriptstyle 1$};
\draw (2.2,0.8) node{$\scriptstyle 2$};
\draw (5,0.8) node{$\scriptstyle q$};
\end{tikzpicture}
\caption{The quotient translation quiver $\mathbb{Z}A_p/\langle \tau_{q+1} \rangle$ when $q$ is odd.}
\label{fig:quotient_tau}
\end{figure}
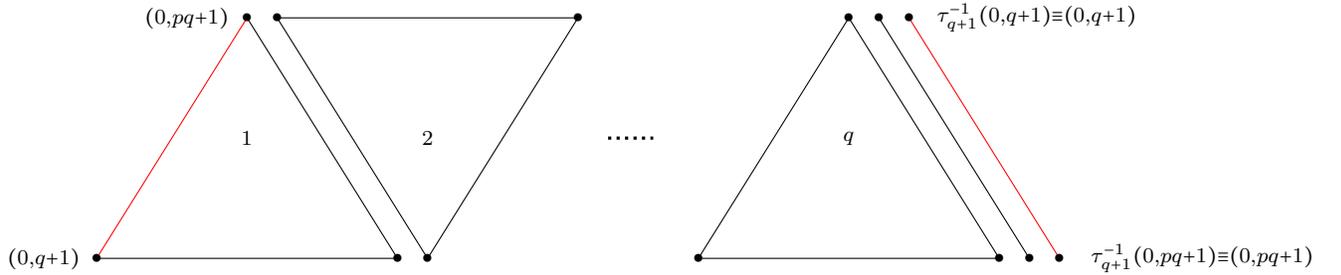

We present a geometric realisation of $\mathbb{Z}A_p/\langle \tau_{q+1} \rangle$. Let $N=(p+1)q+2$ and $P$ be a regular convex $N$-gon. Label the vertices of $P$ from $0$ to $N-1$ in an anti-clockwise direction.
We denote the diagonal joining vertices $i$ and $j$ by $\{i,j\}$.

\begin{defn}[{\cite[Definition 2.5]{MJ}}]
A \textit{$q$-allowable diagonal in $P$} is a diagonal joining two non-adjacent boundary vertices which divides $P$ into two smaller polygons which can themselves be subdivided into $(q+2)$-gons by non-crossing diagonals. Note that these are the diagonals of $P$ spanning $1+kq$ vertices, for $k$ a positive integer.
\end{defn}

\begin{proposition}[{\cite[proposition 2.9]{MJ}}]
There is a bijection
\begin{align*}
    \Bigg\{
    \begin{matrix}\text{indecomposable objects in } \mathcal{C}_q(A_p)  \\ \,\, (=\text{vertices of } \mathbb{Z}A_p/\langle \tau_{q+1} \rangle) \,\, \end{matrix}
    \Bigg\}
    \longrightarrow
    \Bigg\{
    \begin{matrix} \,\,\text{$q$-allowable diagonals}\,\, \\ \text{in } P \end{matrix}
    \Bigg\}
\end{align*}
given by $(i,j)\mapsto\{i\, (\mmod N),j\, (\mmod N)\}$.
\end{proposition}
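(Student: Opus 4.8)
The plan is to work entirely in the coordinate system of Figure \ref{fig:coordinate_ZAn}, where a vertex of $\mathbb{Z}A_p$ is a pair $(rq,sq+1)$ with $r,s\in\mathbb{Z}$ and $r+1\leq s\leq r+p$; writing $k:=s-r\in\{1,\dots,p\}$, the map sends this vertex to the unordered pair $\{rq\bmod N,\,(sq+1)\bmod N\}$. I would organise the proof into three steps: that the assignment lands among $q$-allowable diagonals, that it is constant on $\langle\tau_{q+1}\rangle$-orbits, and that the resulting map on indecomposables is a bijection. For the first step, the endpoints differ by $(sq+1)-rq=kq+1$ with $1\leq k\leq p$, so since $N=(p+1)q+2$ we have $2\leq kq+1\leq N-2$ and obtain a genuine (non-edge) diagonal, whose complementary difference is $(p+1-k)q+1$. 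I would record the reformulation that a diagonal is $q$-allowable exactly when its endpoint difference is $\equiv 1\pmod q$ and it is not an edge: such a diagonal cuts off sub-polygons on $kq+2$ and $(p+1-k)q+2$ vertices, each $\equiv 2\pmod q$ and hence subdivisible into $(q+2)$-gons. As $kq+1\equiv 1\pmod q$, the image is $q$-allowable.

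For the second step, the key computation is the reduction of $\tau_{q+1}=\tau\circ\Sigma^{-q}$ modulo $N$. From Definition \ref{defn_sigma_tau} a direct calculation gives $\Sigma^2(i,j)=(i+(N-2),\,j+(N-2))$, so $\Sigma^2$ is translation by $(N-2,N-2)\equiv(-2,-2)\pmod N$, while $\tau$ is translation by $(-q,-q)$. Using that $q$ is odd, I would write $\Sigma^{-q}=\Sigma^{-1}\circ(\Sigma^2)^{-(q-1)/2}$ and combine these to obtain $\tau_{q+1}(i,j)\equiv(j,i)\pmod N$; since diagonals are unordered pairs, the image is unchanged, so the assignment is $\tau_{q+1}$-invariant and descends to the indecomposables of $\mathcal{C}_q(A_p)$. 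I would also record the exact identity $\tau_{q+1}^2=\tau^2\circ(\Sigma^2)^{-q}$, which is translation by $(-qN,-qN)$, i.e.\ $r\mapsto r-N$ on the first coordinate.

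For the third step, the hypothesis that $q$ is odd gives $\gcd(q,N)=\gcd(q,2)=1$, so $q$ is a unit modulo $N$, and this drives both directions. Surjectivity: given a $q$-allowable diagonal, after labelling its endpoints so that their difference is $kq+1$ with $k\in\{1,\dots,p\}$, solve $rq\equiv a\pmod N$ for $r$ and set $s=r+k$; then $(rq,sq+1)$ is a vertex mapping to the given diagonal. Injectivity: if two vertices have the same image, I would first apply $\tau_{q+1}$ to one of them if necessary (using the swap of the previous paragraph) to arrange that the residues match without interchange; invertibility of $q$ then forces $r_1\equiv r_2$ and $s_1\equiv s_2\pmod N$, and the constraint $s-r\in\{1,\dots,p\}\subseteq\{1,\dots,N-1\}$ forces the two $N$-shifts to agree, exhibiting the two vertices as $\tau_{q+1}^{2m}$-translates of one another via the identity $\tau_{q+1}^2=$ translation by $(-qN,-qN)$. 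Hence the descended map is a bijection.

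I expect the main obstacle to be the second step: pinning down the reduction of $\tau_{q+1}$ modulo $N$ exactly — in particular the fact that for $q$ odd it becomes the coordinate swap rather than a translation — and reconciling the geometric definition of a $q$-allowable diagonal with the arithmetic condition (difference $\equiv 1\pmod q$) used throughout. Once these are in hand the bijection is a short, counting-free consequence of $q$ being a unit modulo $N$; for completeness one could alternatively finish the third step by verifying that both sides have exactly $\tfrac{1}{2}Np=\tfrac{1}{2}p\big((p+1)q+2\big)$ elements.
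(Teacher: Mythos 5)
The paper itself offers no proof of this proposition: it is imported wholesale from \cite{MJ} (Proposition 2.9 there, which rests on the Baur--Marsh geometric model \cite{BM}), so there is no internal argument to compare yours against. Judged on its own merits, your verification is correct in the case $q$ odd, and the key computations all check out: $\Sigma^2$ is exactly translation by $(N-2,N-2)$, hence $\equiv(-2,-2)\pmod N$; for odd $q$ this yields $\tau_{q+1}(i,j)\equiv(j,i)\pmod N$ and $\tau_{q+1}^2=$ translation by $(-qN,-qN)$; the identification of $q$-allowable diagonals with non-edge diagonals whose endpoint difference is $\equiv 1\pmod N$ reduced modulo $q$ agrees with the characterisation recorded in the paper's definition (difference $kq+1$, $1\leq k\leq p$); and since $N\equiv 2\pmod q$, oddness of $q$ gives $\gcd(q,N)=1$, which makes both your surjectivity argument and your injectivity argument (including the reduction of the ``swapped'' case to the unswapped one by applying $\tau_{q+1}$ once, and the use of $s-r\in\{1,\dots,p\}$ with $p<N$ to pin down the $N$-shift) go through.

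The one genuine issue is scope. The proposition, both as stated here and in \cite{MJ}, carries no parity hypothesis on $q$; in this section of the paper only the final computation (Proposition \ref{prop_Grot_Cm}) assumes $q$ odd. Your Steps 2 and 3 invoke oddness essentially, and both fail as written for even $q$: one has $\Sigma^{-q}=(\Sigma^{-2})^{q/2}\equiv$ translation by $(q,q)\pmod N$, so $\tau_{q+1}$ reduces modulo $N$ to the identity rather than the coordinate swap (this only makes invariance easier), but more seriously $\gcd(q,N)=\gcd(q,2)=2$, so $q$ is not a unit modulo $N$ and the congruence $rq\equiv a\pmod N$ is solvable only when $a$ is even. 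The repair is short: for even $q$ the difference $kq+1$ is odd while $N$ is even, so exactly one endpoint of a $q$-allowable diagonal is even; label that endpoint $a$, halve the congruence, and use $\gcd(q/2,N/2)=1$ (as $N/2=(p+1)(q/2)+1$), noting also that $\tau_{q+1}$ is then exactly translation by $-qN/2$ in each coordinate, which is what injectivity needs. So your architecture survives, but as written the proof establishes the bijection only for odd $q$ --- which happens to be the only case the paper uses.
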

From now on, $q$-allowable diagonals in $P$ and indecomposable objects in $\mathcal{C}_q(A_p)$ are identified. Hence it makes sense to talk about morphisms between two $q$-allowable diagonals.

\begin{defn}
A \textit{$(q+2)$-angulation of $P$} is a maximal collection of non-crossing $q$-allowable diagonals.
\end{defn}

\begin{notation}
Given a vertex $v$ of $P$ and an integer $r$, we denote by $v^r$ its $r^{th}$ successor in the anti-clockwise direction if $r$ is positive and  its $(-r)^{th}$ successor in the clockwise direction if $r$ is negative. We also use the convention $v^{0}=v$.
\end{notation}

\begin{remark}
We can define a cyclic order on the vertices of $P$ as follows. Given three vertices $u,v,w$ of $P$, we write $u<v<w$ if they appear in the order $u,v,w$ when going through the vertices of $P$ in the anti-clockwise direction. Moreover, if we choose two distinct vertices $u$ and $v$, we can consider the interval of vertices $[u,v]$ and in this ``$<$'' is a total order.
\end{remark}

The next two lemmas follow by describing the $H^+(b)$, respectively $H^-(b)$, region from Definition \ref{defn_H+_H-} in terms of $q$-allowable diagonals in $P$.
\begin{lemma}\label{lemma_morphisms_from}
Consider a $q$-allowable diagonal $b=\{b_0,b_1\}$ in $P$. Then a $q$-allowable diagonal $a$ is such that $\Hom (b,a)\neq 0$ if and only if there are some non-negative integers $i,\, j$ such that $a=\{a_0,a_1\}$ for
\begin{align*}
    a_0=b_0^{iq}\in [b_0, b_1^{-2}] \text{ and } a_1=b_1^{jq}\in [b_1, b_0^{-2}].
\end{align*}
See Figure \ref{fig:fig_morphisms_from_b}.
\end{lemma}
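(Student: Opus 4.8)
The plan is to reduce the computation of $\Hom$-spaces in $\mathcal{C}_q(A_p)$ to the description of the hammock regions in $\mathbb{Z}A_p$ recorded in Remark \ref{remark_H+_H-}, and then to translate the resulting coordinate inequalities into the language of $q$-allowable diagonals via the bijection $(i,j)\mapsto\{i\,(\mmod N),j\,(\mmod N)\}$. First I would fix a lift $\tilde b=(rq,sq+1)$ of $b$ to $\mathbb{Z}A_p$, with $r+1\leq s\leq r+p$, so that $b_0\equiv rq$ and $b_1\equiv sq+1$ modulo $N$. Since $\mathcal{C}_q(A_p)=\mathcal{D}^b(kA_p)/\tau_{q+1}$ is an orbit category, a morphism from $b$ to $a$ is a sum of morphisms in $\mathcal{D}^b(kA_p)$ from $\tilde b$ to the various $\tau_{q+1}$-shifts of a fixed lift of $a$; hence $\Hom(b,a)\neq 0$ if and only if some $\tau_{q+1}$-shift of $a$ lies in $H^+(\tilde b)$, using Remark \ref{remark_H+_H-} and the equivalence of the mesh category $k(\mathbb{Z}A_p)$ with $\text{Ind}\,\mathcal{D}^b(kA_p)$.

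Next I would read off $H^+(\tilde b)$ from Definition \ref{defn_H+_H-}: its vertices are exactly the $(iq,jq+1)$ with $r\leq i\leq s-1$ and $s\leq j\leq r+p$. Applying the bijection, such a vertex corresponds to the diagonal $\{a_0,a_1\}$ with $a_0\equiv iq$ and $a_1\equiv jq+1$ modulo $N$. Writing $i=r+i'$ and $j=s+j'$ with $i',j'\geq 0$ and re-indexing, this reads precisely $a_0=b_0^{i'q}$ and $a_1=b_1^{j'q}$. It then remains to check that the bounds on $i$ and $j$ are equivalent to the interval conditions $a_0\in[b_0,b_1^{-2}]$ and $a_1\in[b_1,b_0^{-2}]$. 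For the first coordinate, the largest admissible value is $(s-1)q=sq-q$, and since $N=(p+1)q+2$ one computes $b_1^{-2}\equiv sq-1$, so the successors $b_0^{i'q}$ landing in $[b_0,b_1^{-2}]$ are exactly those with $0\leq i'\leq s-1-r$. For the second coordinate, the largest admissible value is $(r+p)q+1$, and $b_0^{-2}\equiv rq-2\equiv(r+p+1)q$ modulo $N$, so the successors $b_1^{j'q}$ landing in $[b_1,b_0^{-2}]$ are exactly those with $0\leq j'\leq r+p-s$; here one uses $1\leq q$ to see that $(r+p)q+1\leq(r+p+1)q$ while the next multiple of $q$ overshoots.

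The hard part will be the bookkeeping forced by passing to the orbit category: I must make sure that the image of $H^+(\tilde b)$ under the quotient captures all diagonals $a$ with $\Hom(b,a)\neq 0$ and introduces no spurious ones. Since the statement only concerns vanishing, and the $\Hom$-spaces between indecomposables are at most one-dimensional, it suffices to track which $\tau_{q+1}$-orbits meet $H^+(\tilde b)$; the modular reductions above, together with the explicit value of $N$, ensure that the interval descriptions $[b_0,b_1^{-2}]$ and $[b_1,b_0^{-2}]$ record exactly these orbits without wrap-around overlap. Comparing with Figure \ref{fig:fig_morphisms_from_b} then confirms the geometric picture.
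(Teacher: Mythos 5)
Your proposal is correct and follows exactly the route the paper intends: the paper offers no written proof beyond the remark that the lemma ``follows by describing the $H^+(b)$ region from Definition \ref{defn_H+_H-} in terms of $q$-allowable diagonals in $P$,'' and your argument is precisely that translation, carried out via the orbit-category description of $\Hom$-spaces, Remark \ref{remark_H+_H-}, and the bijection $(i,j)\mapsto\{i\,(\mmod N),j\,(\mmod N)\}$. Your coordinate bookkeeping (e.g.\ $b_1^{-2}\equiv sq-1$ and $b_0^{-2}\equiv(r+p+1)q$, with the multiples of $q$ pinned down by $q\geq 1$) checks out, so this fills in the details the paper leaves implicit.
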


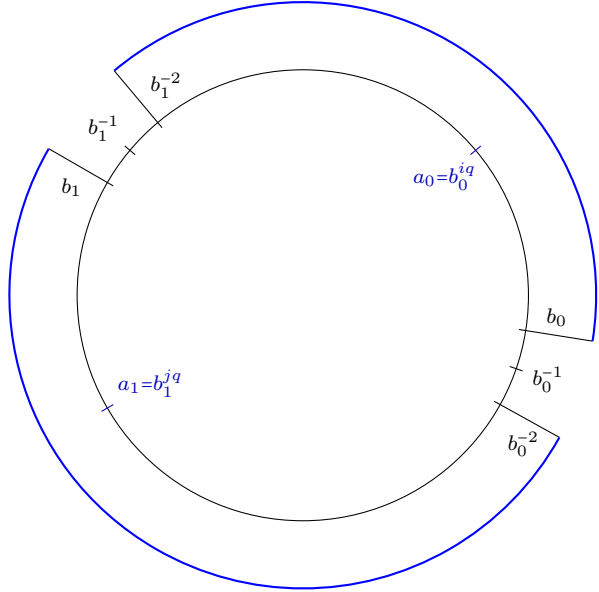
\begin{figure}
\centering
\begin{tikzpicture}[scale=3]
   \draw (0,0) circle (1cm);
   
   \draw (-9:0.97cm) -- (-9:1.3cm);
   \draw (-5:1.13cm) node{$\scriptstyle b_{0}$};
   \draw (150:0.97cm) -- (150:1.3cm);
   \draw (155:1.13cm) node{$\scriptstyle b_{1}$};
   
   \draw (-19:0.97cm) -- (-19:1.03);
   \draw (-19:1.15cm) node{$\scriptstyle b_{0}^{-1}$}; 
   \draw (140:0.97cm) -- (140:1.03cm); 
   \draw (140:1.15cm) node{$\scriptstyle b_{1}^{-1}$};
   \draw (-29:0.97cm) -- (-29:1.3);
   \draw (-34:1.18cm) node{$\scriptstyle b_{0}^{-2}$};
   \draw (130:0.97cm) -- (130:1.3cm); 
   \draw (123:1.11cm) node{$\scriptstyle b_{1}^{-2}$};
   
   \draw [blue!75!black](40:0.97cm) -- (40:1.03cm);
   \draw [color=blue!75!black] (40:0.82cm) node{$\scriptstyle a_0=b_{0}^{iq}$};
   
   \draw [blue!75!black](-150:0.97cm) -- (-150:1.03cm);
   \draw [color=blue!75!black] (-150:0.78cm) node{$\scriptstyle a_{1}=b_{1}^{jq}$};

  
   \draw[thick, blue] ([shift=(130:1.3cm)]0,0) arc (130:-9:1.3cm);
   \draw[thick, blue] ([shift=(-210:1.3cm)]0,0) arc (-210:-29:1.3cm);
\end{tikzpicture} 
\caption{There is a non-zero morphism $b=\{ b_0,b_1 \}\rightarrow \{ a_0,a_1\}=a$.}
\label{fig:fig_morphisms_from_b}
\end{figure}

\begin{lemma}\label{lemma_morphisms_to}
Consider a $q$-allowable diagonal $b=\{b_0,b_1\}$ in $P$. Then a $q$-allowable diagonal $a$ is such that $\Hom (a,b)\neq 0$ if and only if there are some non-negative integers $i,\, j$ such that $a=\{a_0,a_1\}$ for
\begin{align*}
    a_0=b_0^{-iq}\in [b_1^{2},b_0] \text{ and } a_1=b_1^{-jq}\in [b_0^{2}, b_1].
\end{align*}
See Figure \ref{fig:fig_morphisms_to_b}.
\end{lemma}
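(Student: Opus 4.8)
The plan is to read the statement off the region $H^-(b)$ of Definition \ref{defn_H+_H-}, exactly as its companion Lemma \ref{lemma_morphisms_from} is read off $H^+(b)$; the work is to translate the coordinate inequalities cutting out $H^-(b)$ into the cyclic-interval conditions on $P$. First I would fix a lift of the diagonal $b$ to a vertex $(rq,sq+1)$ of $\mathbb{Z}A_p$ with $r+1\le s\le r+p$, so that under the bijection $(i,j)\mapsto\{i\,(\mmod N),\,j\,(\mmod N)\}$ we have $b_0\equiv rq$ and $b_1\equiv sq+1\pmod N$, where $N=(p+1)q+2$. By Remark \ref{remark_H+_H-}, the vertices admitting a non-zero morphism to this lift in the mesh category are precisely those of $H^-(b)$, i.e.\ the $(iq,jq+1)$ with $s-p\le i\le r$ and $r+1\le j\le s$. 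Note that for all such pairs one automatically has $i<j\le i+p$, so each is a genuine $q$-allowable diagonal and the two coordinate ranges vary independently.

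Next I would pass to the orbit category $\mathcal{C}_q(A_p)=\mathcal{D}^b(kA_p)/\tau_{q+1}$ and reduce modulo $N$. Since $\Hom_{\mathcal{C}_q}(a,b)=\bigoplus_{m}\Hom_{\mathcal{D}^b}(\tilde a,\tau_{q+1}^m\tilde b)$, and $\tau_{q+1}$ is exactly the map defining the quotient (so every lift $\tau_{q+1}^m\tilde b$ of $b$ reduces to the same set of diagonals), one gets $\Hom(a,b)\neq 0$ if and only if some lift of $a$ lies in $H^-$ of some lift of $b$; the $m=0$ summand shows a non-zero $\mathcal{D}^b$-morphism survives in the quotient, and the $\Hom$-spaces being at most one-dimensional rules out any cancellation. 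It then remains to reduce the coordinate ranges. For the first coordinate, as $i$ runs from $r$ down to $s-p$ the value $iq$ is $b_0^{-(r-i)q}$, a clockwise multiple of $q$ from $b_0$; using $N=(p+1)q+2$ one checks the extreme value $(s-p)q$ lands $q-1$ steps anticlockwise of $b_1^2=b_1+2$, while the next clockwise $q$-step is $b_1^2-1$ and leaves the arc. Hence the first coordinates are exactly the $a_0=b_0^{-iq}\in[b_1^2,b_0]$. The identical computation for $j$ running from $s$ down to $r+1$ gives $a_1=b_1^{-jq}\in[b_0^2,b_1]$, matching Figure \ref{fig:fig_morphisms_to_b}.

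I expect the main obstacle to be precisely this modular bookkeeping: matching the closed coordinate inequalities defining $H^-(b)$ to the cyclic intervals $[b_1^2,b_0]$ and $[b_0^2,b_1]$ forces one to track the $+1$ and $+2$ offsets built into the coordinate system together with the relation $N=(p+1)q+2$, so that the endpoints of the hammock correspond to the endpoints of the arcs. A cleaner but less self-contained alternative would deduce the statement from Lemma \ref{lemma_morphisms_from} via the $(q+1)$-Calabi--Yau duality $\Hom(a,b)\cong D\Hom(b,\Sigma^{q+1}a)$, using the fact, immediate from Definition \ref{defn_sigma_tau} and $N=(p+1)q+2$, that $\Sigma$ acts on diagonals as a one-step clockwise rotation $\{b_0,b_1\}\mapsto\{b_0^{-1},b_1^{-1}\}$; this turns the ``morphisms-from'' intervals of Lemma \ref{lemma_morphisms_from} into the ``morphisms-to'' intervals here, at the cost of an extra argument matching which endpoint of $a$ is paired with $b_0$ and which with $b_1$.
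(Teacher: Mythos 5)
Your proof is correct and takes essentially the same approach as the paper: the paper treats this lemma (and its companion Lemma \ref{lemma_morphisms_from}) with the single remark that they ``follow by describing the $H^{+}(b)$, respectively $H^{-}(b)$, region from Definition \ref{defn_H+_H-} in terms of $q$-allowable diagonals in $P$''. Your write-up --- lifting $b$ to a vertex $(rq,sq+1)$ of $\mathbb{Z}A_p$, invoking Remark \ref{remark_H+_H-}, passing to the orbit category, and checking via $N=(p+1)q+2$ that the coordinate ranges $s-p\leq i\leq r$, $r+1\leq j\leq s$ reduce exactly to the arcs $[b_1^{2},b_0]$ and $[b_0^{2},b_1]$ --- is precisely that argument carried out in detail, and your modular computations (e.g.\ the extreme value landing $q-1$ steps anticlockwise of $b_1^{2}$) are accurate.
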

\begin{figure}
\centering
\begin{tikzpicture}[scale=3]
   \draw (0,0) circle (1cm);
   
   \draw (-9:0.97cm) -- (-9:1.3cm);
   \draw (-13:1.13cm) node{$\scriptstyle b_{0}$};
   \draw (150:0.97cm) -- (150:1.3cm);
   \draw (146:1.13cm) node{$\scriptstyle b_{1}$};
   
   \draw (1:0.97cm) -- (1:1.03);
   \draw (1:1.15cm) node{$\scriptstyle b_{0}^{1}$}; 
   \draw (160:0.97cm) -- (160:1.03cm); 
   \draw (160:1.15cm) node{$\scriptstyle b_{1}^{1}$};
   \draw (11:0.97cm) -- (11:1.3);
   \draw (16:1.15cm) node{$\scriptstyle b_{0}^{2}$};
   \draw (170:0.97cm) -- (170:1.3cm); 
   \draw (176:1.11cm) node{$\scriptstyle b_{1}^{2}$};
   
   \draw [green](80:0.97cm) -- (80:1.03cm);
   \draw [color=green] (78:0.86cm) node{$\scriptstyle a_{1}=b_1^{-jq}$};
   \draw [green](-150:0.97cm) -- (-150:1.03cm);
   \draw [color=green] (-152:0.80cm) node{$\scriptstyle a_{0}=b_0^{-iq}$};

  
   \draw[thick, green] ([shift=(150:1.3cm)]0,0) arc (150:11:1.3cm);
   \draw[thick, green] ([shift=(170:1.3cm)]0,0) arc (170:351:1.3cm);
   
\end{tikzpicture} 
\caption{There is a non-zero morphism $a=\{a_0,a_1\}\rightarrow\{b_0,b_1\}=b$.}
\label{fig:fig_morphisms_to_b}
\end{figure}
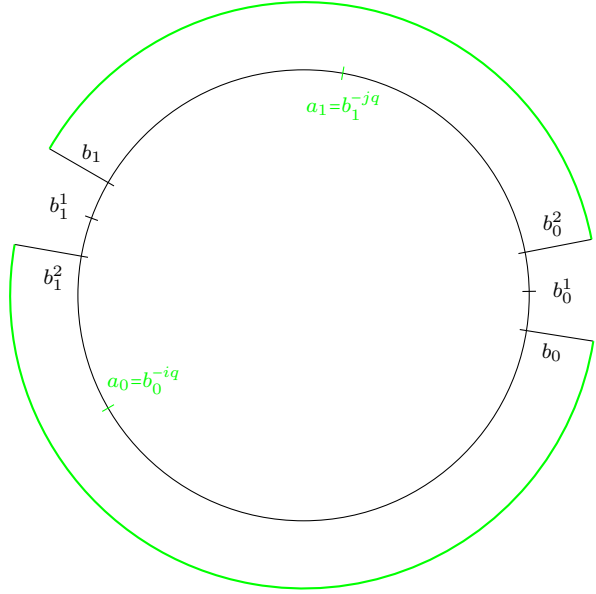

The following is a consequence of \cite[Proposition 2.9]{MJ}.

\begin{lemma}\label{lemma_crossing_Ext}
Two $q$-allowable diagonals $a$ and $b$ in $P$ cross if and only if there exists an integer $1\leq i\leq q$ such that $\Ext^i(a,b)\neq 0$.
\end{lemma}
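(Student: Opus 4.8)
The plan is to convert the statement about higher extensions into one about morphisms, to which Lemmas \ref{lemma_morphisms_from} and \ref{lemma_morphisms_to} apply. Since $\mathcal{C}_q(A_p)$ is triangulated with suspension $\Sigma$, for each $i$ we have $\Ext^i(a,b)=\Hom(a,\Sigma^i b)$, so the right-hand condition reads: there exists $1\le i\le q$ with $\Hom(a,\Sigma^i b)\ne 0$. The first step is therefore to compute the action of $\Sigma$ on a $q$-allowable diagonal. Using Definition \ref{defn_sigma_tau}, the bijection of \cite[Proposition 2.9]{MJ}, and the identity $(p+1)q+1=N-1\equiv -1\pmod N$, one finds that $\Sigma$ sends a diagonal $\{x,y\}$ to $\{x^{-1},y^{-1}\}$; that is, $\Sigma$ rotates a diagonal one vertex clockwise, so that $\Sigma^i b=\{b_0^{-i},b_1^{-i}\}$.

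Next I would feed $c=\Sigma^i b$ into Lemma \ref{lemma_morphisms_to}. Writing $c_0=b_0^{-i}$ and $c_1=b_1^{-i}$, the lemma says $\Hom(a,\Sigma^i b)\ne 0$ precisely when $a_0=b_0^{-i-sq}\in[b_1^{-i+2},b_0^{-i}]$ and $a_1=b_1^{-i-tq}\in[b_0^{-i+2},b_1^{-i}]$ for some $s,t\ge 0$. Reading these off in terms of clockwise distances from $b_0$ and $b_1$, the first condition places $a_0$ strictly inside the open arc running clockwise from $b_0$ to $b_1$ at a distance $\equiv i\pmod q$, and the second places $a_1$ strictly inside the complementary open arc, again at distance $\equiv i\pmod q$. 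The elementary point here is that the length bounds coming from the interval conditions, together with the fact that the two endpoints of $b$ are separated by an arc of length $\equiv 1\pmod q$, let $s$ and $t$ range over exactly the residue-$i$ positions in each arc.

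With this translation in hand, both implications become geometric. For the direction ($\Leftarrow$), if $\Hom(a,\Sigma^i b)\ne 0$ for some $1\le i\le q$, then $a_0$ and $a_1$ lie in the two complementary open arcs cut out by $b$, so $a$ and $b$ cross. For ($\Rightarrow$), suppose $a$ crosses $b$; then $a_0$ and $a_1$ lie in opposite open arcs, and I record their clockwise distances $x$ and $y$ from $b_0$ and $b_1$ respectively. The remaining task is to produce a single $i\in\{1,\dots,q\}$ with $x\equiv y\equiv i\pmod q$: this is forced because both $a$ and $b$ are $q$-allowable, so the arc between the endpoints of each is of length $\equiv 1\pmod q$, whence a short computation gives $x\equiv y\pmod q$. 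Taking $i$ to be this common residue (viewed as an element of $\{1,\dots,q\}$) and solving $x=i+sq$, $y=i+tq$ recovers exactly the hypothesis of Lemma \ref{lemma_morphisms_to} for $\Hom(a,\Sigma^i b)$.

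I expect the main obstacle to be the bookkeeping in the second paragraph: translating the interval conditions $[b_1^{-i+2},b_0^{-i}]$ and $[b_0^{-i+2},b_1^{-i}]$ attached to the rotated diagonal into clean statements about clockwise distances, keeping the orientation conventions and the two divisibility constraints mutually consistent, and checking the boundary positions (where an endpoint of $a$ is adjacent to one of $b$, or where $i=q$ and the relevant residue is $0$). Once the distance description is pinned down, the equivalence with crossing and the residue-matching argument are both immediate.
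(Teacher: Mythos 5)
Your argument is correct, but it takes a genuinely different route from the paper, which offers no proof at all: there the lemma is simply recorded as a consequence of \cite[Proposition 2.9]{MJ}, i.e.\ it is outsourced to Murphy's geometric model. You instead derive it inside the paper's own framework, from Lemma \ref{lemma_morphisms_to} applied to the rotated diagonal $\Sigma^i b$. Your preliminary computation is right: under the bijection $(i,j)\mapsto\{i\,(\mmod N),\,j\,(\mmod N)\}$ and Definition \ref{defn_sigma_tau}, the suspension takes $\{x,y\}$ to $\{x^{-1},y^{-1}\}$, precisely because $(p+1)q+1=N-1$. The bookkeeping you flag as the main obstacle is real but does close. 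For the direction ``$\Ext$ nonzero implies crossing'', note that the interval $[b_1^{-i+2},b_0^{-i}]$ by itself also contains the vertices at clockwise distance at most $i-2$ from $b_1$, which lie in the \emph{wrong} arc; it is exactly the divisibility constraint $a_0=b_0^{-i-sq}$ that excludes them, since placing $a_0$ there would force a positive multiple of $q$ to equal $i-1-d$ with $0\le d\le i-2<q$, which is impossible, and symmetrically for $a_1$. In the converse direction your residue argument is sound: both arcs cut out by $b$ have length $\equiv 1\pmod q$ (this uses $N=(p+1)q+2\equiv 2\pmod q$ together with $b$ being $q$-allowable), so $q$-allowability of $a$ gives $x\equiv y\pmod q$ for the two clockwise distances; taking $i$ to be the common residue in $\{1,\dots,q\}$ then yields $s,t\ge 0$ and the interval memberships required by Lemma \ref{lemma_morphisms_to}. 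What your approach buys is a self-contained proof resting only on the paper's Lemmas \ref{lemma_morphisms_from} and \ref{lemma_morphisms_to}; what the paper's citation buys is brevity, at the price of sending the reader to \cite{MJ}.
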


We describe all the triangles in $\mathcal{C}_q(A_p)$ with indecomposable end terms in terms of $q$-allowable diagonals in $P$.
In order to do this, we use a similar method to the one used by Pescod in \cite[chapter 4]{PD}.

The following two lemmas are inspired by \cite[lemmas 4.1.1 and 4.1.2]{PD}.
\begin{lemma}\label{lemma_PD411}
Consider a triangle in $\mathcal{C}_q(A_p)$ of the form
\begin{align*}
\Delta=a\rightarrow e\rightarrow b\rightarrow \Sigma a,
\end{align*}
with $a$ and $b$ indecomposable. If $c$ is an indecomposable in $\mathcal{C}_q(A_p)$ such that there exists an integer $1\leq i\leq q$ with $\Ext^i(c,e)\neq 0$, then at least one of $\Ext^i(c,a)$ and $\Ext^i(c,b)$ is non-zero.

In terms of $q$-allowable diagonals in $P$, we have that if $c$ crosses a direct summand of $e$, then $c$ crosses at least one of $a$ and $b$.
\end{lemma}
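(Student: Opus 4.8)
The plan is to reduce everything to the long exact sequence obtained by applying the cohomological functor $\Hom(c,-)$ to the triangle $\Delta$, and then to translate the resulting non-vanishing of $\Ext$-groups back into crossings by invoking Lemma \ref{lemma_crossing_Ext}. I would first establish the homological statement about $\Ext^i(c,-)$ and only afterwards deduce the geometric reformulation, since the latter is just the former read through the dictionary between crossings and nonzero $\Ext$-groups in degrees $1\le i\le q$.

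For the homological statement I would apply $\Hom(c,-)$ to $\Delta = a\to e\to b\to \Sigma a$. Since $\Hom(c,-)$ is cohomological and $\Ext^i(c,-)=\Hom(c,\Sigma^i-)$, this produces a long exact sequence
\begin{align*}
\cdots\to \Ext^i(c,a)\xrightarrow{\alpha} \Ext^i(c,e)\xrightarrow{\beta} \Ext^i(c,b)\to\cdots .
\end{align*}
Fixing the degree $i$ supplied by the hypothesis, I would choose $0\neq x\in \Ext^i(c,e)$ and split into two cases. If $\beta(x)\neq 0$ then $\Ext^i(c,b)\neq 0$; otherwise $x\in\Ker\beta=\im\alpha$, so $x$ is the image of a necessarily nonzero element of $\Ext^i(c,a)$, whence $\Ext^i(c,a)\neq 0$. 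Either way at least one of $\Ext^i(c,a)$ and $\Ext^i(c,b)$ is nonzero, which is exactly the claim.

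To obtain the crossing reformulation I would use that $\mathcal{C}_q(A_p)$ is Krull--Schmidt, writing $e\cong\bigoplus_j e_j$ with each $e_j$ indecomposable. If $c$ crosses some summand $e_{j_0}$, then by Lemma \ref{lemma_crossing_Ext} there is an integer $1\le i\le q$ with $\Ext^i(c,e_{j_0})\neq 0$, and additivity of $\Ext^i(c,-)$ in the second variable gives $\Ext^i(c,e)\neq 0$ for this same $i$. Applying the homological statement yields $\Ext^i(c,a)\neq 0$ or $\Ext^i(c,b)\neq 0$, and a final appeal to Lemma \ref{lemma_crossing_Ext} turns this into the assertion that $c$ crosses $a$ or $c$ crosses $b$. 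The only point requiring genuine care is the bookkeeping of the degree $i$: because $e$ need not be indecomposable, one must ensure that the degree detected by the single summand $e_{j_0}$ is exactly the one fed into the exact sequence, and that Lemma \ref{lemma_crossing_Ext} is applied consistently in that fixed degree. Beyond this there is no deeper obstacle—the substantive input is the external identification of crossings with nonvanishing $\Ext$ in degrees $1\le i\le q$, after which the argument is a routine long-exact-sequence chase.
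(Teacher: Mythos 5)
Your proof is correct and takes essentially the same approach as the paper: both apply the cohomological functor $\Hom(c,-)$ to $\Delta$ and read the conclusion off the exact sequence $\Ext^i(c,a)\rightarrow \Ext^i(c,e)\rightarrow \Ext^i(c,b)$. The only difference is that you spell out the element chase and the translation to crossings via Lemma \ref{lemma_crossing_Ext} and Krull--Schmidt, which the paper leaves implicit.
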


\begin{proof}
The triangle $\Delta$ induces the exact sequence:
\begin{align*}
\Ext^i(c,a)\rightarrow \Ext^i(c,e)\rightarrow \Ext^i(c,b).
\end{align*}
Since $\Ext^i(c,e)\neq 0$, it follows that at least one of $\Ext^i(c,a)$ and $\Ext^i(c,b)$ is non-zero.
%
\end{proof}

\begin{lemma}\label{lemma_PD412}
Let $a$ and $b\in\mathcal{C}_q(A_p)$ be indecomposable and assume that $\Ext^1(b,a)\neq 0$. Let
\begin{align*}
\Delta= a\xrightarrow{\alpha} e\xrightarrow{\epsilon} b\xrightarrow{\beta} \Sigma a
\end{align*}
be the ensuing nonsplit triangle. Then
\begin{align*}
\Ext^{1\dots q}(e,a)=\Ext^{1\dots q}(b,e)=0.
\end{align*}
In terms of $q$-allowable diagonals in $P$, there is no direct summand of $e$ crossing $a$ or $b$.
\end{lemma}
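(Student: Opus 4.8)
The plan is to translate the two vanishing statements into the long exact sequences coming from the triangle $\Delta$ and to reduce everything to a single claim about $\Ext$ between the crossing pair $a,b$. Recall that in $\mathcal{C}_q(A_p)$ the spaces between indecomposables are at most one-dimensional and the category is $(q+1)$-Calabi--Yau, so $\dim_k\Ext^i(X,Y)=\dim_k\Ext^{q+1-i}(Y,X)$. Since $a$ and $b$ are indecomposable diagonals, neither crosses itself, so by Lemma \ref{lemma_crossing_Ext} we have $\Ext^i(a,a)=\Ext^i(b,b)=0$ for $1\le i\le q$, whereas $\Hom(a,a)=\Hom(b,b)=k$.

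First I would apply $\Hom(-,a)$ to $\Delta$. For each $i$ this yields the exact piece
\begin{align*}
\Ext^{i-1}(a,a)\xrightarrow{\beta^*}\Ext^i(b,a)\xrightarrow{\epsilon^*}\Ext^i(e,a)\xrightarrow{\alpha^*}\Ext^i(a,a).
\end{align*}
For $2\le i\le q$ both outer terms vanish, so $\Ext^i(e,a)\cong\Ext^i(b,a)$; for $i=1$ the connecting map sends $\mathrm{id}_{\Sigma a}\mapsto\beta$, a nonzero element of the one-dimensional space $\Ext^1(b,a)$, so $\beta^*$ is surjective and $\Ext^1(e,a)=0$. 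Applying $\Hom(b,-)$ to $\Delta$ gives, symmetrically,
\begin{align*}
\Ext^{i-1}(b,b)\xrightarrow{\beta_*}\Ext^i(b,a)\xrightarrow{\alpha_*}\Ext^i(b,e)\xrightarrow{\epsilon_*}\Ext^i(b,b),
\end{align*}
whence $\Ext^i(b,e)\cong\Ext^i(b,a)$ for $2\le i\le q$ and $\Ext^1(b,e)=0$ because $\beta_*(\mathrm{id}_b)=\beta$ again generates $\Ext^1(b,a)$. Thus both required vanishings reduce to the single statement $\Ext^i(b,a)=0$ for $2\le i\le q$.

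To prove this I would show that, for a crossing pair of $q$-allowable diagonals, the graded space $\Ext^{1\dots q}(b,a)$ is concentrated in a single degree; since $\Ext^1(b,a)\neq 0$ by hypothesis, that degree must be $1$ and the higher terms vanish. Writing $\Ext^i(b,a)=\Hom(b,\Sigma^i a)$ and reading off from Definition \ref{defn_sigma_tau} and the bijection with diagonals that $\Sigma$ rotates each endpoint by one step (so $\Sigma^i a$ is $a$ rotated by $i$), Lemma \ref{lemma_morphisms_from} says $\Hom(b,\Sigma^i a)\neq 0$ exactly when both endpoints of $\Sigma^i a$ land on the hammock $H^+(b)$, that is on the progressions $b_0^{sq}$ and $b_1^{tq}$ inside the prescribed arcs. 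As $i$ runs through $q$ consecutive values these progressions impose a congruence modulo $q$ on $i$, so at most one $i\in\{1,\dots,q\}$ can meet the endpoint conditions, while Lemma \ref{lemma_crossing_Ext} guarantees that at least one does; this forces the concentration.

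The main obstacle is precisely this last combinatorial step. Because the number of vertices $N=(p+1)q+2$ is congruent to $2$, not $0$, modulo $q$, the residue modulo $q$ of an endpoint is not preserved under reduction modulo $N$, so a naive residue count fails and the congruence must be carried out against the explicit arcs $[b_0,b_1^{-2}]$ and $[b_1,b_0^{-2}]$ of Lemma \ref{lemma_morphisms_from} (together with the dual description in Lemma \ref{lemma_morphisms_to}). Once the single-degree concentration is in hand, the two long exact sequences above give $\Ext^{1\dots q}(e,a)=\Ext^{1\dots q}(b,e)=0$ at once. The geometric reformulation then follows from Lemma \ref{lemma_crossing_Ext}: a direct summand $c$ of $e$ crosses $a$ if and only if $\Ext^i(c,a)\neq 0$ for some $1\le i\le q$, and it crosses $b$ if and only if $\Ext^i(b,c)\neq 0$ for some such $i$ (using that $i\mapsto q+1-i$ permutes $\{1,\dots,q\}$ under Calabi--Yau duality), and $\Ext^{1\dots q}(e,a)$, $\Ext^{1\dots q}(b,e)$ decompose as direct sums over the summands $c$.
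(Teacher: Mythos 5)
Your reduction via the two long exact sequences is correct and is essentially the same skeleton as the paper's proof: both arguments hinge on the single claim $\Ext^{2\dots q}(b,a)=0$, and once that is known, applying $\Hom(b,-)$ and $\Hom(-,a)$ to $\Delta$ (together with $\Ext^{1\dots q}(a,a)=\Ext^{1\dots q}(b,b)=0$ and the surjectivity of $\beta_*$, resp.\ $\beta^*$, onto the one-dimensional space $\Ext^1(b,a)$) kills all the remaining groups exactly as you describe.

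However, there is a genuine gap: you never prove the central claim. You reduce everything to the statement that $\Ext^{i}(b,a)$, $1\le i\le q$, is concentrated in a single degree, sketch a congruence-modulo-$q$ argument on the endpoints of $\Sigma^i a$, and then explicitly concede that this argument fails as stated (since $N=(p+1)q+2\not\equiv 0 \pmod q$, residues of vertices are not well defined modulo $q$ after reduction modulo $N$) and that ``the congruence must be carried out against the explicit arcs'' --- which you do not do. That unfinished combinatorial step \emph{is} the lemma; everything else is routine homological bookkeeping. The paper closes exactly this hole by a different and cleaner mechanism: it works in the quotient translation quiver $\mathbb{Z}A_p/\langle\tau_{q+1}\rangle$ and observes that the hammocks
\begin{align*}
H^+(\Sigma^{-q}b),\ H^+(\Sigma^{-q+1}b),\ \dots,\ H^+(\Sigma^{-1}b)
\end{align*}
are pairwise disjoint in the fundamental domain (this is visible from Figure \ref{fig:quotient_tau}), so that by Remark \ref{remark_H+_H-} at most one of $\Hom(\Sigma^{-j}b,a)=\Ext^{j}(b,a)$, $1\le j\le q$, can be non-zero; since $\Ext^1(b,a)\neq 0$ by hypothesis, the others vanish. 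If you want to salvage your endpoint-congruence approach, you would need to carry out the arc-by-arc case analysis you postponed, or replace it by this disjointness-of-hammocks argument. Note also that you do not need Lemma \ref{lemma_crossing_Ext} to guarantee that some degree is non-vanishing --- that is already the hypothesis $\Ext^1(b,a)\neq 0$.
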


\begin{proof}
Note that the identification on $\mathbb{Z}A_p$ to obtain $\mathbb{Z}A_p/\langle \tau_{q+1}\rangle \cong \mathcal{C}_q(A_p)$ is such that
\begin{align*}
H^+(\Sigma^{-q}b),\, H^+(\Sigma^{-q+1}b),\, \dots,\, H^+(\Sigma^{-2}b),\, H^+(\Sigma^{-1}b)
\end{align*}
are all disjoint. See Figure \ref{fig:quotient_tau} for $\mathbb{Z}A_p/\langle \tau_{q+1}\rangle$ when $q$ is odd; the case when $q$ is even is similar.
By Remark \ref{remark_H+_H-},  we have that at most one of
\begin{align*}
\Hom(\Sigma^{-q}b, a),\, \Hom(\Sigma^{-q+1}b,a),\,\dots,\, \Hom(\Sigma^{-2}b,a),\, \Hom(\Sigma^{-1}b,a)
\end{align*}
is non-zero. Equivalently, at most one of
\begin{align*}
\Ext^q(b,a),\, \Ext^{q-1}(b,a),\,\dots,\, \Ext^{2}(b,a),\, \Ext^{1}(b,a)
\end{align*}
is non-zero. Since $\Ext^1(b,a)$ is non-zero by assumption, we have that $\Ext^{2\dots q}(b,a)=0$.
Consider the following exact sequence induced by $\Delta$:
\begin{align*}
&\Hom (b,b)\xrightarrow{\beta_*} \Hom(b,\Sigma a)\xrightarrow{\gamma} \Hom(b, \Sigma e)\rightarrow\Hom(b,\Sigma b).
\end{align*}
Since $b$ does not cross itself, by Lemma \ref{lemma_crossing_Ext}, we have that $\Hom(b,\Sigma^i b)=0$ for any $1\leq i\leq q$. Then, as $\Hom(b,b)$ is non-zero and $\beta\neq 0$, we have that $\beta_*\neq 0$. Hence $\Hom(b,\Sigma a)$ is one-dimensional over $k$ and $\beta_*$ is surjective, so that $\gamma=0$. Then $\Ext^1(b, e)=\Hom(b,\Sigma e)=0$. For $2\leq i\leq q$, consider the following exact sequence induced by $\Delta:$
\begin{align*}
\Hom(b, \Sigma^i a)\rightarrow \Hom(b, \Sigma^i e)\rightarrow \Hom(b, \Sigma^i b)=0.
\end{align*} 
For $2\leq i\leq q$, we have 
\begin{align*}
0=\Ext^i(b,a)=\Hom(b, \Sigma^i a),
\end{align*}
and so $\Ext^{2\dots q}(b,e)=0$. 

A similar argument shows that no direct summand of $e$ crosses $a$.
\end{proof}

Note that the results from \cite[chapter 4.2]{PD} are valid in $\mathcal{C}_q(A_p)$ since this is a $k$-linear, $\Hom$-finite, Krull-Schmidt triangulated category. We state them again for the convenience of the reader.

\begin{lemma}[{\cite[lemmas 4.2.1 and 4.2.2]{PD}}]
Let $b\in\mathcal{C}_q(A_p)$ be indecomposable. Assume there exists a nonsplit triangle
\begin{align*}
a\xrightarrow{\alpha} e\xrightarrow{\epsilon} b\rightarrow \Sigma a,
\end{align*}
then each row of the matrix $\alpha$ has a non-zero entry and each column of the matrix $\epsilon$ has a non-zero entry.
\end{lemma}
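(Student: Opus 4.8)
The plan is to read both assertions as \emph{minimality} statements about the triangle: no indecomposable summand of $e$ may be split off by either $\alpha$ or $\epsilon$, and any such splitting would force the whole triangle to be split, contradicting the hypothesis. Since $\mathcal{C}_q(A_p)$ is Krull--Schmidt, I would first write $e=\bigoplus_j e_j$ as a finite direct sum of indecomposables, with summand inclusions $\iota_j\colon e_j\to e$ and projections $\pi_j\colon e\to e_j$. A row of $\alpha$ is the component $\pi_j\alpha\colon a\to e_j$ and a column of $\epsilon$ is the component $\epsilon\iota_j\colon e_j\to b$, so the two claims are exactly $\pi_j\alpha\neq 0$ and $\epsilon\iota_j\neq 0$ for every $j$. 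These two statements are dual under rotation of the triangle (equivalently, under passing to the opposite category), so in principle one follows from the other, but I would spell out both since each uses a different end term.

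For the columns of $\epsilon$, suppose $\epsilon\iota_j=0$ for some indecomposable summand $e_j$. Because $a\xrightarrow{\alpha}e\xrightarrow{\epsilon}b$ is a triangle, any morphism into $e$ annihilated by $\epsilon$ factors through $\alpha$; hence there is $h\colon e_j\to a$ with $\alpha h=\iota_j$. As $\iota_j$ is a split monomorphism, with retraction $r$, the equality $r\alpha h=1_{e_j}$ exhibits $r\alpha$ as a left inverse of $h$, so $h$ is a split monomorphism and $e_j$ is a direct summand of $a$. Using that $a$ is indecomposable (which holds for the triangles under consideration, cf. the preceding lemma), $h$ is an isomorphism, $\alpha=\iota_j h^{-1}$ is itself a split monomorphism, and a triangle whose first map is a split monomorphism is split. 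This contradicts the nonsplit hypothesis, so every column of $\epsilon$ is nonzero.

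For the rows of $\alpha$, suppose $\pi_j\alpha=0$. Dually, a morphism out of $e$ that annihilates $\alpha$ factors through $\epsilon$, so $\pi_j=g\epsilon$ for some $g\colon b\to e_j$. Since $\pi_j$ is a split epimorphism, with section $s$, the equality $g\epsilon s=1_{e_j}$ shows $\epsilon s$ is a right inverse of $g$, so $g$ is a split epimorphism and $e_j$ is a direct summand of $b$. As $b$ is indecomposable, $g$ is an isomorphism and $\epsilon$ becomes a split epimorphism, which again forces the triangle to split. Hence every row of $\alpha$ is nonzero.

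The main obstacle is bookkeeping rather than conceptual. I must make sure that the two factorisations are genuinely instances of the defining axioms (the weak kernel/cokernel property of $\alpha$ and $\epsilon$ in the triangle), that the passage from ``$\alpha h$ is a split monomorphism'' to ``$h$ is a split monomorphism'' is valid in an arbitrary additive category, and that the summand extracted really corresponds to a \emph{trivial} triangle summand $e_j\xrightarrow{=}e_j\to 0\to\Sigma e_j$ (respectively $0\to e_j\xrightarrow{=}e_j\to 0$), so that the standard equivalence ``first map split mono $\Leftrightarrow$ last map zero $\Leftrightarrow$ triangle split'' applies and delivers the contradiction. The one delicate point worth flagging is the asymmetric role of the two indecomposability hypotheses: the statement about $\alpha$ needs $b$ indecomposable, while the statement about $\epsilon$ needs $a$ indecomposable, which is precisely the situation of the nonsplit triangles produced by Lemma \ref{lemma_PD412}.
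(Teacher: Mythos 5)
Your argument is correct, but note that the paper itself offers no proof to compare with: it quotes this statement from Pescod's thesis \cite[lemmas 4.2.1 and 4.2.2]{PD}, remarking only that the results of \cite[chapter 4.2]{PD} remain valid in $\mathcal{C}_q(A_p)$ because it is a $k$-linear, $\Hom$-finite, Krull--Schmidt triangulated category. Your proof --- read the rows of $\alpha$ as the components $\pi_j\alpha$ and the columns of $\epsilon$ as the components $\epsilon\iota_j$, factor a vanishing component through the other map of the triangle via the long exact sequences for $\Hom(e_j,-)$ and $\Hom(-,e_j)$, extract a split monomorphism (respectively epimorphism), and conclude that the triangle splits --- is the standard one, and is what one finds in \cite{PD}.

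The caveat you flag at the end deserves to be promoted from a remark to a correction: with only $b$ assumed indecomposable, the assertion about the columns of $\epsilon$ is actually false, so the extra hypothesis your column argument invokes (that $a$ is indecomposable) is genuinely necessary and not an artifact of your proof. Indeed, given any non-split triangle $a'\xrightarrow{\alpha'} e'\xrightarrow{\epsilon'} b\xrightarrow{\gamma'}\Sigma a'$ with $b$ indecomposable, its direct sum with the trivial triangle $x\xrightarrow{1_x} x\rightarrow 0\rightarrow\Sigma x$ is again a triangle
\begin{align*}
a'\oplus x\longrightarrow e'\oplus x\longrightarrow b\longrightarrow\Sigma(a'\oplus x),
\end{align*}
whose middle map is $\begin{pmatrix}\epsilon' & 0\end{pmatrix}$ and whose connecting map is $\begin{pmatrix}\gamma'\\ 0\end{pmatrix}\neq 0$; it is therefore non-split with $b$ indecomposable, yet its column at the summand $x$ vanishes. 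This is precisely why the two assertions carry dual indecomposability hypotheses in \cite{PD}, where they are two separate lemmas, and it is harmless for the present paper, since the lemma is only ever applied in Proposition \ref{prop_triangles_Pescod} to triangles both of whose end terms are indecomposable, exactly as produced by Lemma \ref{lemma_PD412}. Modulo stating that hypothesis explicitly, your proof is complete and correct.
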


\begin{remark}
Note that since $\Hom$-spaces between indecomposables in $\mathcal{C}_q(A_p)$ are either zero or one dimensional over $k$, we can state  \cite[lemma 4.2.3]{PD} as follows.
\end{remark}

\begin{lemma}\label{lemma_PD423}
For $a$ and $b$ indecomposables in $\mathcal{C}_q(A_p)$, let
\begin{align*}
a\rightarrow e\rightarrow b\rightarrow \Sigma a
\end{align*}
be a triangle in $\mathcal{C}_q(A_p)$. Then, $e$ has no repeated indecomposable summands.
\end{lemma}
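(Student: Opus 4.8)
The plan is to argue by contradiction, feeding the matrix information from the preceding lemma (\cite[lemmas 4.2.1 and 4.2.2]{PD}) into the fact that $\Hom$-spaces between indecomposables in $\mathcal{C}_q(A_p)$ are at most one-dimensional over $k$. First I would reduce to the nonsplit case: if the triangle splits then $e\cong a\oplus b$ is a sum of two indecomposables, and these are the triangles that are not of interest for the classification in this section, so the content is entirely in the nonsplit triangles $a\xrightarrow{\alpha} e\xrightarrow{\epsilon} b\xrightarrow{\beta}\Sigma a$ treated by the previous lemma.

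So suppose the triangle is nonsplit and, for contradiction, that some indecomposable $x$ occurs in $e$ with multiplicity at least two; fix two of these summands $x_1,x_2\cong x$ and write $e=x_1\oplus x_2\oplus e'$. Since $a$ is indecomposable, the previous lemma tells us that every row of the matrix of $\alpha$ is nonzero, so in particular the two components $\alpha_1\colon a\to x_1$ and $\alpha_2\colon a\to x_2$ are both nonzero. Because $\Hom(a,x)$ is at most one-dimensional and now contains two nonzero elements, it is exactly one-dimensional, spanned by some $\phi\colon a\to x$, whence $\alpha_1=\lambda_1\phi$ and $\alpha_2=\lambda_2\phi$ with $\lambda_1,\lambda_2\in k^{\times}$.

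The key step is then to absorb this linear dependence into an automorphism of $e$. Since $\End(x)=k$, the endomorphism $g$ acting on $x_1\oplus x_2$ by the invertible matrix $\left(\begin{smallmatrix}1 & 0\\ -\lambda_2/\lambda_1 & 1\end{smallmatrix}\right)$ and by the identity on $e'$ is an automorphism of $e$, so $a\xrightarrow{g\alpha} e\xrightarrow{\epsilon g^{-1}} b\xrightarrow{\beta}\Sigma a$ is an isomorphic, hence still nonsplit, triangle. Its component into $x_2$ is $-\tfrac{\lambda_2}{\lambda_1}\alpha_1+\alpha_2=0$, so the matrix of $g\alpha$ has a zero row, contradicting the previous lemma. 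Therefore no indecomposable summand of $e$ can occur with multiplicity greater than one.

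The main obstacle is really just recognising the right move: one should exploit the one-dimensionality of $\Hom(a,x)$ through an automorphism acting on the \emph{target} $e$ (not on $a$ or $b$), so that the manufactured zero row lands exactly in the hypothesis of \cite[lemmas 4.2.1 and 4.2.2]{PD}; once this is arranged the computation is immediate. The dual argument, using instead that every column of $\epsilon$ is nonzero together with $\dim_k\Hom(x,b)\le 1$, would work equally well and could be cited as a sanity check.
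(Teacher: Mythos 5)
Your argument is correct, but a comparison with ``the paper's own proof'' is slightly moot here: the paper does not prove this lemma at all. It is imported from Pescod's thesis as \cite[lemma 4.2.3]{PD}, and the surrounding text only remarks that the statement takes this simplified form because $\Hom$-spaces between indecomposables in $\mathcal{C}_q(A_p)$ are at most one-dimensional. Your proposal therefore supplies an argument that the paper delegates to a citation, and it is the natural one: it uses exactly the two ingredients the paper assembles, namely the row/column non-vanishing of \cite[lemmas 4.2.1 and 4.2.2]{PD} and the one-dimensionality of $\Hom$-spaces. The mechanism is sound: since $\Hom(a,x)$ is one-dimensional, the two components $\alpha_1,\alpha_2$ are proportional, the elementary matrix $\left(\begin{smallmatrix}1 & 0\\ -\lambda_2/\lambda_1 & 1\end{smallmatrix}\right)\oplus 1_{e'}$ is an automorphism $g$ of $e$ (here $\End(x)=k$, as $\End(x)$ is at most one-dimensional and contains $1_x$), the triangle $a\xrightarrow{g\alpha} e\xrightarrow{\epsilon g^{-1}} b\xrightarrow{\beta}\Sigma a$ is isomorphic to the original one and hence still a nonsplit triangle with indecomposable end terms, and its component into $x_2$ vanishes, contradicting the cited lemma. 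Since $a$ is indecomposable, ``rows'' of the matrix of $g\alpha$ are single entries, so this is indeed a zero row.

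One point deserves more care than your first paragraph gives it: restricting to nonsplit triangles is not a matter of taste but is forced, because the lemma as literally stated is false for split triangles. If $a\cong b$, then $a\rightarrow a\oplus a\rightarrow a\xrightarrow{0}\Sigma a$ is a triangle whose middle term has a repeated summand. So the statement must be read, as in Pescod and as in its only use in Proposition \ref{prop_triangles_Pescod}, as a claim about nonsplit triangles; in the split case with $a\not\cong b$ the conclusion holds trivially since $e\cong a\oplus b$. With that reading, your proof is complete, and your closing observation that the dual argument via the columns of $\epsilon$ and $\dim_k\Hom(x,b)\leq 1$ works equally well is also correct.
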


\begin{lemma}[{\cite[lemma 4.2.4]{PD}}]\label{lemma_PD424}
For $a$ and $b$ indecomposables in $\mathcal{C}_q(A_p)$, let
\begin{align*}
a\rightarrow e\rightarrow b\rightarrow \Sigma a
\end{align*}
be a triangle in $\mathcal{C}_q(A_p)$. Then
\begin{align*}
\Ext^{1}(\Sigma a, e_i)\neq 0 \text{ and } \Ext^{1}(e_i,\Sigma^{-1}b)\neq 0,
\end{align*}
for each indecomposable summand $e_i$ of $e$.
\end{lemma}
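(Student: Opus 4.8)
The plan is to reduce both non-vanishing statements to statements about ordinary $\Hom$-spaces, and then to read off the required non-vanishing directly from the matrix descriptions of $\alpha$ and $\epsilon$ provided by the preceding lemma on the matrices of a nonsplit triangle.

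First I would rewrite the two Ext groups using that $\Sigma$ is an autoequivalence. Since $\Ext^1(X,Y)=\Hom(X,\Sigma Y)$, we have $\Ext^1(\Sigma a,e_i)=\Hom(\Sigma a,\Sigma e_i)\cong\Hom(a,e_i)$ and $\Ext^1(e_i,\Sigma^{-1}b)=\Hom(e_i,\Sigma\Sigma^{-1}b)=\Hom(e_i,b)$. Thus the lemma is equivalent to showing that $\Hom(a,e_i)\neq 0$ and $\Hom(e_i,b)\neq 0$ for every indecomposable summand $e_i$ of $e$. Note this translation is purely formal and does not use the Calabi--Yau structure.

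Next, writing the triangle as $a\xrightarrow{\alpha}e\xrightarrow{\epsilon}b\xrightarrow{\beta}\Sigma a$ and taking it to be nonsplit (the relevant case, since a split triangle would force $e\cong a\oplus b$ and the conclusion can fail), I would invoke the preceding matrix lemma \cite[lemmas 4.2.1 and 4.2.2]{PD}. Because $a$ and $b$ are indecomposable, the matrix of $\alpha$ is a single column whose entries are the components $\alpha_i\colon a\to e_i$, and the matrix of $\epsilon$ is a single row whose entries are the components $\epsilon_i\colon e_i\to b$. That lemma guarantees a non-zero entry in each row of $\alpha$ and in each column of $\epsilon$; as each such row and each such column here consists of a single entry, every $\alpha_i$ and every $\epsilon_i$ is non-zero. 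Hence $\Hom(a,e_i)\neq 0$ and $\Hom(e_i,b)\neq 0$, which combined with the previous paragraph gives both asserted non-vanishings.

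The only delicate point is the bookkeeping: matching the row/column convention of the matrix lemma to the correct component maps, and checking that for indecomposable $a$ and $b$ the matrices genuinely degenerate to a single column and a single row, so that ``a non-zero entry in each row/column'' is exactly ``every component is non-zero''. I do not expect a real obstacle, since all of the substance is already contained in the matrix lemma; the remaining work is just the shift identities together with the observation that one must restrict to nonsplit triangles for the hypotheses of that lemma to hold.
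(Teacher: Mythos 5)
Your proof is correct. Note that the paper contains no proof of this lemma to compare against: it is quoted from \cite[lemma 4.2.4]{PD} ``for the convenience of the reader,'' together with the rest of the chapter~4.2 material. Your argument is exactly the deduction that the surrounding material sets up, and it is the natural one: identify $\Ext^{1}(\Sigma a,e_i)\cong\Hom(a,e_i)$ and $\Ext^{1}(e_i,\Sigma^{-1}b)\cong\Hom(e_i,b)$ by shifting, then apply the matrix lemma (\cite[lemmas 4.2.1 and 4.2.2]{PD}, stated in the paper immediately before this one) to the nonsplit triangle $a\xrightarrow{\alpha}e\xrightarrow{\epsilon}b\rightarrow\Sigma a$; since $a$ and $b$ are indecomposable, the matrix of $\alpha$ is a single column and that of $\epsilon$ a single row, both indexed by the summands $e_i$ of $e$, so ``a non-zero entry in each row of $\alpha$ and each column of $\epsilon$'' says precisely that every component $a\to e_i$ and every component $e_i\to b$ is non-zero. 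Your remark about the nonsplit hypothesis is a genuine catch rather than pedantry: as literally stated (with no nonsplit assumption) the lemma fails, e.g.\ for a split triangle $a\rightarrow a\oplus b\rightarrow b\xrightarrow{0}\Sigma a$ with $\Hom(a,b)=0$, taking the summand $e_i=b$; so the nonsplit hypothesis carried by the matrix lemma must be read into Lemma \ref{lemma_PD424} as well. This restriction loses nothing, since the paper only ever applies the lemma to nonsplit triangles, in the proof of Proposition \ref{prop_triangles_Pescod}.
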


\begin{lemma}\label{lemma_allowable_iff}
Consider two crossing $q$-allowable diagonals $a=\{a_0,a_1\}$ and $b=\{b_0,b_1\}$ in $P$, where $b_0<a_0<b_1<a_1$. Then $\{a_0,b_0\}$ is $q$-allowable or an edge if and only if $\{a_1,b_1\}$ is $q$-allowable or an edge.
\end{lemma}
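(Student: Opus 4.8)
The statement is purely combinatorial about $q$-allowable diagonals in the $N$-gon $P$, with $N=(p+1)q+2$. Recall a diagonal $\{u,v\}$ is $q$-allowable precisely when it spans $1+kq$ vertices for some positive integer $k$, i.e.\ the (anti-clockwise) arc-lengths on the two sides are each $\equiv 1 \pmod q$. The plan is to translate the hypotheses $b_0<a_0<b_1<a_1$ and the $q$-allowability of $a$, $b$, into congruence conditions modulo $q$ on the cyclic gaps between the four vertices $a_0,a_1,b_0,b_1$, and then show that the congruence condition making $\{a_0,b_0\}$ allowable-or-edge is literally the same congruence condition that makes $\{a_1,b_1\}$ allowable-or-edge.

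\textbf{Setting up the arithmetic.} First I would fix the cyclic position $b_0<a_0<b_1<a_1<b_0$ and name the four arc-lengths $w=|[b_0,a_0]|$, $x=|[a_0,b_1]|$, $y=|[b_1,a_1]|$, $z=|[a_1,b_0]|$ (number of edges along each arc), so that $w+x+y+z=N=(p+1)q+2$. The $q$-allowability of $b=\{b_0,b_1\}$ says that one of its sides, namely $w+x$, satisfies $w+x\equiv 1\pmod q$ (the complementary side $y+z$ then automatically satisfies $y+z\equiv N-1\equiv 1\pmod q$ since $N\equiv 2\pmod q$). Likewise $a=\{a_0,a_1\}$ being $q$-allowable gives $x+y\equiv 1\pmod q$. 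Subtracting these two congruences yields $w\equiv y\pmod q$, and equivalently $x\equiv z\pmod q$ because $w+x+y+z\equiv 2\pmod q$ forces $w+y$ and $x+z$ to have the complementary residues.

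\textbf{The key equivalence.} Now $\{a_0,b_0\}$ is the diagonal cutting off the single arc $[b_0,a_0]$ of length $w$; it is $q$-allowable or an edge exactly when $w\equiv 1\pmod q$ (an edge corresponds to $w=1$, which satisfies this). Symmetrically $\{a_1,b_1\}$ cuts off the arc $[b_1,a_1]$ of length $y$ and is allowable-or-edge exactly when $y\equiv 1\pmod q$. Since we have just shown $w\equiv y\pmod q$, the condition $w\equiv 1$ holds if and only if $y\equiv 1$ does, which is precisely the claimed biconditional. I would finish by checking the degenerate boundary cases — that the cyclic ordering is genuine (the four vertices are distinct, guaranteed by $b_0<a_0<b_1<a_1$ being a crossing), and that ``edge'' ($w=1$ or $y=1$) is correctly absorbed into the residue-$1$ condition.

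\textbf{The main obstacle.} The only delicate point is bookkeeping the ``$+1$'' offsets and the reduction $N\equiv 2\pmod q$ consistently: one must be careful whether ``$q$-allowable'' is stated in terms of \emph{vertices spanned} ($1+kq$) versus \emph{edges on an arc} ($kq$), since the two differ by one, and an off-by-one error here would spuriously break the symmetry. Once the convention is pinned down so that each of the three allowability conditions reads cleanly as a single arc-length lying in $1+q\mathbb{Z}$, the argument is a one-line subtraction of congruences; I expect no conceptual difficulty beyond getting this normalisation right, and I would double-check it against the explicit hammock description underlying Lemmas \ref{lemma_morphisms_from} and \ref{lemma_morphisms_to}.
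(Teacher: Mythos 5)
Your proof is correct and is essentially the paper's own argument written in additive notation: the paper sets $a_1=a_0^{1+rq}$, $b_1=b_0^{1+tq}$, $a_0=b_0^{1+sq}$ and computes $a_1=b_1^{1+(r+s-t)q}$, which is precisely your subtraction of congruences on the arc lengths $w,x,y,z$. The only cosmetic difference is that your formulation $w\equiv y \pmod q$ handles both directions of the biconditional at once (and absorbs the paper's positivity check $r+s\geq t$ into the fact that arc lengths between distinct vertices are at least $1$), whereas the paper proves one implication and notes that the converse is similar.
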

\begin{proof}
Since $a$ and $b$ are $q$-allowable diagonals, there are positive integers $r,\, t$ such that
\begin{align*}
    a_1=a_0^{1+rq} \text{ and } b_1=b_0^{1+tq}.
\end{align*}
Assume that $(a_0, b_0)$ is $q$-allowable or an edge in $P$. Then $a_0=b_0^{1+sq}$ for some integer $s\geq 0$, see Figure \ref{fig:fig_allowable}. We have 
\begin{align*}
    a_1= a_0^{1+rq}=b_0^{2+(r+s)q}=b_1^{2+(r+s)q-1-tq}=b_1^{1+(r+s-t)q},
\end{align*}
where $r+s\geq t$. Hence $(a_1,b_1)$ is $q$-allowable or an edge in $P$. The other direction is proved in a similar way.
\end{proof}

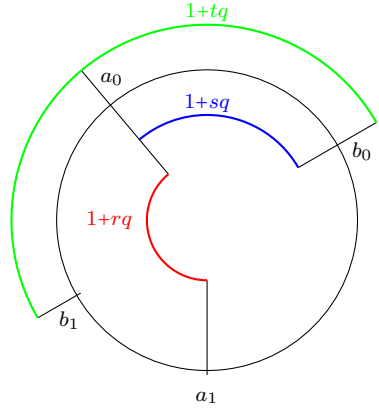
\begin{figure}
  \centering
    \begin{tikzpicture}[scale=2]
      \draw (0,0) circle (1cm);

      \draw (30:0.7cm) -- (30:1.3cm);
      \draw (24:1.13cm) node{$\scriptstyle b_{0}$};
      
      \draw (210:0.97cm) -- (210:1.3cm);
      \draw (216:1.13cm) node{$\scriptstyle b_{1}$};

      \draw (-90:0.4cm) -- (-90:1.03);
      \draw (-90:1.18cm) node{$\scriptstyle a_{1}$};

      \draw (130:0.4cm) -- (130:1.3cm); 
      \draw (124:1.13cm) node{$\scriptstyle a_{0}$};
      
    \draw[thick, green] ([shift=(30:1.3cm)]0,0) arc (30:210:1.3cm);
    \draw[thick, blue] ([shift=(30:0.7cm)]0,0) arc (30:130:0.7cm);
    \draw[thick, red] ([shift=(130:0.4cm)]0,0) arc (130:270:0.4cm);
    
    \draw [green](90:1.38cm) node{$\scriptstyle 1+tq$};
    \draw [blue](90:0.78cm) node{$\scriptstyle 1+sq$};
    \draw [red](180:0.65cm) node{$\scriptstyle 1+rq$};
      
    \end{tikzpicture}
    \caption{The diagonals $\{a_0,a_1\}$ and $\{b_0,b_1\}$ are $q$-allowable and $\{a_0,b_0\}$ is either $q$-allowable or an edge.}
    \label{fig:fig_allowable}
\end{figure}

\begin{proposition}\label{prop_triangles_Pescod}
Consider two crossing $q$-allowable diagonals $a=\{a_0,a_1\}$ and $b=\{b_0,b_1\}$ in $P$, where $b_0<a_0<b_1<a_1$.
\begin{itemize}
\item There exists exactly one integer $0\leq l\leq q-1$ such that $\Hom(b,\Sigma^{l+1}a)\neq 0$. Then the nonsplit triangle extending $\beta: b\rightarrow \Sigma^{l+1}a$ is
\begin{align*}
\Delta= \Sigma^l a\rightarrow e\rightarrow b\xrightarrow{\beta} \Sigma^{l+1}a,
\end{align*}
where $e=e_1\oplus e_2$ for $e_{1}=\{a_0^{-l},b_0\}$ and $e_2=\{b_1, a_1^{-l}\}$.
\item If $0\leq i \leq q-1$ is an integer such that $\{a_0^{-i},b_0\}$ is a $q$-allowable diagonal or an edge in $P$, then $i=l$.
\end{itemize}
\end{proposition}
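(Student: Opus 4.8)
The plan is to establish the first bullet by separately pinning down the integer $l$ and then the middle term $e$, and to deduce the second bullet by an elementary counting argument. I would begin with the observation that, pushing Definition~\ref{defn_sigma_tau} through the bijection between indecomposables and $q$-allowable diagonals and using $N=(p+1)q+2$, so that $(p+1)q+1\equiv -1\pmod N$, the suspension acts by
\begin{align*}
 \Sigma\{x,y\}=\{x^{-1},y^{-1}\},
\end{align*}
i.e.\ it rotates both endpoints one step clockwise. In particular $\Sigma^{l}a=\{a_0^{-l},a_1^{-l}\}$, $\Sigma^{l+1}a=\{a_0^{-l-1},a_1^{-l-1}\}$ and $\Sigma^{-1}b=\{b_0^{1},b_1^{1}\}$. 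For the existence and uniqueness of $l$: since $a$ and $b$ cross and crossing is symmetric, Lemma~\ref{lemma_crossing_Ext}, applied with $a$ and $b$ interchanged, yields an integer $1\leq l+1\leq q$ with $\Ext^{l+1}(b,a)=\Hom(b,\Sigma^{l+1}a)\neq 0$, so such an $l\in\{0,\dots,q-1\}$ exists. For uniqueness I would reuse the disjointness argument from the proof of Lemma~\ref{lemma_PD412}: the regions $H^{+}(\Sigma^{-q}b),\dots,H^{+}(\Sigma^{-1}b)$ are pairwise disjoint, so by Remark~\ref{remark_H+_H-} at most one of $\Hom(\Sigma^{-i}b,a)=\Ext^{i}(b,a)$ with $1\leq i\leq q$ is non-zero. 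Hence $l$ is unique.

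To identify the middle term, I would first locate the two candidate summands. As $\Hom(b,\Sigma^{l+1}a)\neq 0$, Lemma~\ref{lemma_morphisms_from} forces $a_0^{-l-1}=b_0^{\,sq}$ and $a_1^{-l-1}=b_1^{\,tq}$ for some integers $s,t\geq 0$ (matching each endpoint of $\Sigma^{l+1}a$ with the endpoint of $b$ on its side, consistently with $b_0<a_0<b_1<a_1$). Rewriting gives $a_0^{-l}=b_0^{\,1+sq}$ and $a_1^{-l}=b_1^{\,1+tq}$, so
\begin{align*}
 e_1=\{a_0^{-l},b_0\}=\{b_0,b_0^{\,1+sq}\},\qquad e_2=\{b_1,a_1^{-l}\}=\{b_1,b_1^{\,1+tq}\}
\end{align*}
span $1+sq$, respectively $1+tq$, vertices; each is therefore a $q$-allowable diagonal, or an edge (the zero object) when the relevant index vanishes.

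Now set $a':=\Sigma^{l}a$, so that $\Ext^1(b,a')\neq 0$ and $\Delta$ is the ensuing nonsplit triangle $a'\xrightarrow{\alpha} e\xrightarrow{\epsilon} b\xrightarrow{\beta}\Sigma a'$. Lemma~\ref{lemma_PD412} shows no summand of $e$ crosses $a'$ or $b$, while Lemma~\ref{lemma_PD424} shows every summand crosses $\Sigma a'=\Sigma^{l+1}a$ and $\Sigma^{-1}b$. Since $b$ and $a'$ are the two crossing diagonals of the quadrilateral with corners $b_0,a_0^{-l},b_1,a_1^{-l}$, any diagonal avoiding both has both endpoints in the closure of a single one of the four boundary arcs. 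A direct check of the crossing requirements against $\Sigma^{l+1}a$ and $\Sigma^{-1}b$ then excludes the arcs $[a_0^{-l},b_1]$ and $[a_1^{-l},b_0]$ (whose closures lie on one side of $\Sigma^{l+1}a$, resp.\ of $\Sigma^{-1}b$) and, in the remaining two arcs, forces the summand to be exactly $e_1$, resp.\ $e_2$. With Lemma~\ref{lemma_PD423} (no repeated summands) this shows $e$ is a subsum of $e_1\oplus e_2$.

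The hard part will be confirming that both genuine diagonals among $e_1,e_2$ actually occur, so that $e=e_1\oplus e_2$ rather than a proper subsum. Here I would invoke \cite[Lemmas 4.2.1 and 4.2.2]{PD}, which guarantee that each component $a'\to e_i$ of $\alpha$ and each component $e_i\to b$ of $\epsilon$ is non-zero; combined with the fact that $\dim_k\Ext^1(b,a')=1$ records the single crossing of $b$ and $a'$, the Ptolemy-type resolution leaves no room for a proper subsum, giving $e=e_1\oplus e_2$. As $e$ is the cone of a non-zero morphism in the one-dimensional space $\Hom(b,\Sigma^{l+1}a)$, it is determined up to isomorphism, so this is the asserted triangle. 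Finally, for the second bullet, writing $a_0=b_0^{\,m}$ one has $\{a_0^{-i},b_0\}=\{b_0,b_0^{\,m-i}\}$, which spans $m-i$ vertices and so is $q$-allowable or an edge precisely when $m-i\equiv 1\pmod q$; as $i$ runs through $\{0,\dots,q-1\}$ the values $m-i$ are pairwise distinct modulo $q$, so there is a unique such $i$, and $a_0^{-l}=b_0^{\,1+sq}$ gives $m-l\equiv 1\pmod q$, identifying this unique index as $l$.
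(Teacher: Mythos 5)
Your overall strategy parallels the paper's: use Lemma \ref{lemma_PD412} together with Lemma \ref{lemma_PD424} and Lemma \ref{lemma_PD423} to show that every indecomposable summand of $e$ is one of $e_1$, $e_2$, each occurring at most once, and then argue that both actually occur. Your treatment of the existence and uniqueness of $l$ (via Lemma \ref{lemma_crossing_Ext} and the hammock-disjointness argument), your identification of $e_1,e_2$ as $q$-allowable diagonals or edges via Lemma \ref{lemma_morphisms_from}, and your congruence argument for the second bullet are all sound and essentially agree with the paper.

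However, there is a genuine gap at exactly the step you flag as ``the hard part'': ruling out $e=e_1$ or $e=e_2$ when both $e_1$ and $e_2$ are genuine diagonals. The lemmas you invoke, \cite[Lemmas 4.2.1 and 4.2.2]{PD}, only constrain the summands that \emph{do} occur in $e$: they say that each component $a'\to e_i$ and each component $e_i\to b$ involving an existing summand is non-zero. They are therefore perfectly consistent with $e=e_1$, since there do exist non-zero morphisms $a'\to e_1$ and $e_1\to b$; nothing in these lemmas can force a summand to appear. Likewise, the fact that $\Hom(b,\Sigma^{l+1}a)$ is one-dimensional only tells you the cone of $\beta$ is well defined up to isomorphism; it gives no mechanism for computing that cone, and ``the Ptolemy-type resolution leaves no room for a proper subsum'' is an assertion, not an argument. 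The paper closes this gap with a concrete contradiction: assuming $e=e_1$, it constructs the auxiliary diagonal $c=\{a_0^{-l},b_1^{1}\}$, verifies that $c$ is $q$-allowable (using $a_1^{-l}=a_0^{-l+1+sq}=b_1^{1+rq}$, whence $b_1^{1}=a_0^{-l+1+(s-r)q}$), checks that $c$ crosses $b$ but crosses neither $e_1$ nor $\Sigma^{l+1}a$, and then applies Lemma \ref{lemma_PD411} to the rotated triangle $e\rightarrow b\rightarrow \Sigma^{l+1}a\rightarrow \Sigma e$ to reach a contradiction; the case $e=e_2$ is symmetric, and the degenerate cases (where $b=\Sigma^{l+1}a$, or one of $e_1,e_2$ is an edge) are dispatched separately using Lemma \ref{lemma_allowable_iff}. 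You would need to supply an argument of this kind for your proof to be complete.
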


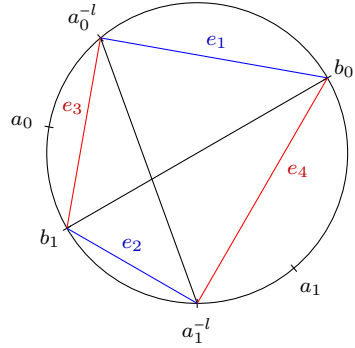
\begin{figure}
  \centering
    \begin{tikzpicture}[scale=2]
      \draw (0,0) circle (1cm);

      \draw (30:0.97cm) -- (30:1.03cm);
      \draw (30:1.13cm) node{$\scriptstyle b_{0}$};
      
      \draw (210:0.97cm) -- (210:1.03cm);
      \draw (210:1.13cm) node{$\scriptstyle b_{1}$};
      
      \draw (-50:0.97cm) -- (-50:1.03);
      \draw (-50:1.18cm) node{$\scriptstyle a_{1}$};
      
       \draw (170:0.97cm) -- (170:1.03cm); 
      \draw (170:1.18cm) node{$\scriptstyle a_{0}$};
     
      \draw (-90:0.97cm) -- (-90:1.03);
      \draw (-90:1.18cm) node{$\scriptstyle a_{1}^{-l}$};

      \draw (130:0.97cm) -- (130:1.03cm); 
      \draw (130:1.18cm) node{$\scriptstyle a_{0}^{-l}$};
      
      \draw (210:1cm) -- (30:1cm);
      \draw (-90:1cm) -- (130:1cm);
     
     \draw [blue] (-90:1cm) -- (210:1cm);
     \draw [blue] (30:1cm) -- (130:1cm);
     
     \draw [red] (130:1cm) -- (210:1cm);
     \draw [red] (30:1cm) -- (-90:1cm);
     
     \draw [color=blue!75!black] (80:0.75cm) node{$\scriptstyle e_1$};
     \draw [color=blue!75!black] (-125:0.75cm) node{$\scriptstyle e_2$};
     \draw [color=red!75!black] (160:0.88cm) node{$\scriptstyle e_3$};
      \draw [color=red!75!black] (-10:0.68cm) node{$\scriptstyle e_4$};
    \end{tikzpicture}
    \caption{The triangle $\Sigma^l a\rightarrow e_1\oplus e_2\rightarrow b\xrightarrow{\beta} \Sigma^{l+1}a$.}
    \label{fig:figtriangle}
\end{figure}

\begin{proof}
Assume that $\Delta$ is a non-split triangle and let $\overline{e}$ be a direct summand of the middle term $e$. By Lemma \ref{lemma_PD412}, we have that $\overline{e}$ does not cross $\Sigma^l a$ or $b$. Moreover, if $c$ is a $q$-allowable diagonal crossing $\overline{e}$, by Lemma \ref{lemma_PD411}, we have that $c$ crosses at least one of $\Sigma^l a$ and $b$. Hence the only possibilities for $\overline{e}$ are the diagonals
\begin{align*}
    e_{1}=\{a_0^{-l},b_0\},\, e_2=\{b_1, a_1^{-l}\},\, e_3=\{a_0^{-l},b_1\},\, e_4=\{b_0, a_1^{-l}\},
\end{align*}
see Figure \ref{fig:figtriangle}. By Lemma \ref{lemma_PD424}, we have  that
\begin{align*}
    \Ext^{1}(\Sigma^{l+1} a, \overline{e})\neq 0 \text{ and } \Ext^{1}(\overline{e},\Sigma^{-1}b)\neq 0.
\end{align*}
Hence $\overline{e}$ crosses both $\Sigma^{l+1}a$ and $\Sigma^{-1} b$, which implies that $e_3$ and $e_4$ must be excluded from the possible summands of $e$. Moreover, $e$ has no repeated summands by Lemma \ref{lemma_PD423}, and so
\begin{align*}
    e\in\{ 0, e_1, e_2, e_1\oplus e_2 \}.
\end{align*}
We claim that $e=e_1\oplus e_2$. We prove this claim by dealing with the cases $e_1=e_2=0$, one of $e_1$, $e_2$ zero and $e_1$, $e_2$ both non-zero separately. 
First, note that if $b=\Sigma^{l+1}a$, then
\begin{align*}
    \Delta= \Sigma^{l}a\rightarrow 0\rightarrow \Sigma^{l+1} a\xrightarrow{\cong} \Sigma^{l+1} a.
\end{align*}
Note that in this case $b_0^{1}=a_0^{-l}$ and $b_1^{1}=a_1^{-l}$ so that $e_1=e_2=0$ and $e=e_1\oplus e_2=0$.
Assume now that $b$ is not $\Sigma^{l+1}a$, so that $e\neq 0$.
Note that if $e_1$ (respectively $e_2$) is zero, then $e=e_1\oplus e_2=e_2$ (respectively $e=e_1$) is the only option and we are done. Moreover, since $e\neq 0$, we have that at least one of $e_1$, $e_2$ is $q$-allowable or an edge in $P$. But then, by Lemma \ref{lemma_allowable_iff}, we have that $e_1$ and $e_2$ are both $q$-allowable diagonals or edges in $P$.
The last case to deal with is when $e_1$ and $e_2$ are both non-zero, \textbf{i.e.} they both are $q$-allowable diagonals in $P$.
Suppose for a contradiction that $e=e_1$.
Consider the triangle
\begin{align*}
    e\rightarrow b\rightarrow \Sigma^{l+1}a\rightarrow \Sigma e,
\end{align*}
and set $c=\{a_0^{-l}, b_1^{1}\}$. Note that $b_1<b_1^{1}< a_1^{-l}$ since $e_2\neq 0$. Since $\Sigma^l a$ and $e_2$ are $q$-allowable diagonals, there are integers $s>r>0$ such that
\begin{align*}
    a_1^{-l}=a_0^{-l+1+sq}=b_1^{1+rq}.
\end{align*}
Hence $b^{1}_1=a_0^{-l+1+(s-r)q}$ and so $c$ is $q$-allowable. So $c$ is a $q$-allowable diagonal crossing $b$ but not crossing neither $e=e_1$ nor $\Sigma^{l+1}a$, see Figure \ref{fig:figtriangleproof}, contracting Lemma \ref{lemma_PD411}. Then $e\neq e_1$.
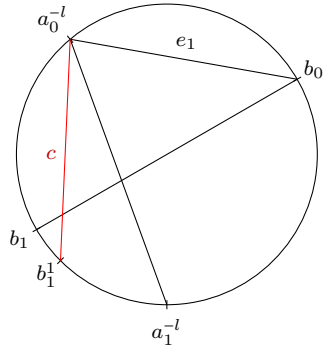
\begin{figure}
  \centering
    \begin{tikzpicture}[scale=2]
      \draw (0,0) circle (1cm);

      \draw (30:0.97cm) -- (30:1.03cm);
      \draw (30:1.13cm) node{$\scriptstyle b_{0}$};
      
      \draw (210:0.97cm) -- (210:1.03cm);
      \draw (210:1.13cm) node{$\scriptstyle b_{1}$};
      
     \draw (225:0.97cm) -- (225:1.03cm);
     \draw (225:1.13cm) node{$\scriptstyle b_{1}^1$}; 
      
      
     
      \draw (-90:0.97cm) -- (-90:1.03);
      \draw (-90:1.18cm) node{$\scriptstyle a_{1}^{-l}$};

      \draw (130:0.97cm) -- (130:1.03cm); 
      \draw (130:1.18cm) node{$\scriptstyle a_{0}^{-l}$};
      
      \draw (210:1cm) -- (30:1cm);
      \draw (-90:1cm) -- (130:1cm);
     
     \draw  (30:1cm) -- (130:1cm);
     
     \draw [red] (225:1cm) -- (130:1cm);
     
     
     \draw (80:0.75cm) node{$\scriptstyle e_1$};
     \draw [color=red!75!black] (180:0.77cm) node{$\scriptstyle c$};
    \end{tikzpicture}
    \caption{The $q$-allowable diagonal $c$ crosses $b$ but does not cross neither $e_1$ nor $\Sigma^{l+1}a$.}
    \label{fig:figtriangleproof}
\end{figure}
By a similar argument, assuming that $e=e_2$ leads to a contradiction.
Hence we have that $e=e_1\oplus e_2$.

We now prove the second part of the proposition. Assume $0\leq i\leq q-1$ is an integer such that $\{a_0^{-i},b_0\}$ is a $q$-allowable diagonal or an edge in $P$. Then there are integers $r,\,s \geq 0$ such that
\begin{align*}
    a_0^{-i}=b_0^{1+sq} \text{ and } a_0^{-l}=b_0^{1+rq}.
\end{align*}
Then $b_0^{1+rq}=a_0^{-l}=b_0^{1+sq+i-l}$. So $i-l=(r-s)q$.

If $0\leq l\leq i\leq q-1$, then $r\geq s\geq 0$. Note that $0\leq i-l\leq q-l-1<q$, so $i=l$ is the only option. If $0\leq i\leq l\leq q-1$, then $s\geq r\geq 0$. Note that $0\leq l-i\leq q-i-1<q$, so $i=l$ is again the only option.
\end{proof}

\begin{proposition}\label{prop_Grot_Cm}
Assume that $q$ is odd.
We have that
\begin{align*}
    K_0(\mathcal{C}_q(A_p))\cong
    \begin{cases}
    0 &\text{if $p$ is even,}\\
    \mathbb{Z} & \text{if $p$ is odd.}
    \end{cases}
\end{align*}
\end{proposition}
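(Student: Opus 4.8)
The plan is to combine the explicit classification of triangles in Proposition~\ref{prop_triangles_Pescod} with the general machinery of Corollary B. Since the $N$-gon $P$, with $N=(p+1)q+2$, has only finitely many $q$-allowable diagonals, the category $\mathcal{C}_q(A_p)$ is of finite type, so any $(q+1)$-cluster tilting subcategory $\mathcal{T}$ automatically has $\mathrm{Ind}\,\mathcal{T}$ locally bounded; as $\mathcal{C}_q(A_p)$ is $(q+1)$-Calabi--Yau with $q+1$ even, Corollary B applies and presents $K_0(\mathcal{C}_q(A_p))\cong K_0^{sp}(\mathcal{T})/\langle\sum_{i=0}^{q}(-1)^i[T_i]\rangle$, one relation per indecomposable of $\mathcal{T}$. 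By Lemma~\ref{lemma_crossing_Ext} a $(q+1)$-cluster tilting subcategory is exactly the additive closure of a $(q+2)$-angulation of $P$, and I would take $\mathcal{T}$ to be the fan of all $q$-allowable diagonals $d_k=\{0,1+kq\}$, $k=1,\dots,p$, so that $K_0^{sp}(\mathcal{T})\cong\mathbb{Z}^p$ and there are exactly $p$ relations.

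First I would record two structural facts. Reading off Definition~\ref{defn_sigma_tau} and using $(p+1)q+1\equiv-1\pmod N$, the suspension acts on diagonals by $\Sigma\{x,y\}=\{x-1,y-1\}$, i.e. as clockwise rotation by one vertex; in particular $\Sigma^N\cong\mathrm{id}$. Since $[\Sigma X]=-[X]$ in the Grothendieck group of any triangulated category (apply the triangle relation to $X\to0\to\Sigma X\to\Sigma X$), this yields $[\{x-1,y-1\}]=-[\{x,y\}]$ and $[X]=(-1)^N[X]$. As $N\equiv p+1\pmod2$, for $p$ even we get $2[X]=0$ for every object, while for $p$ odd this relation is vacuous.

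For the lower bound when $p$ is odd I would produce an explicit nonzero homomorphism out of $K_0$. Define $w(\{x,y\})=(-1)^x+(-1)^y$; this is well defined on $\mathbb{Z}/N$ exactly when $N$ is even, i.e. when $p$ is odd. A direct check shows $w$ annihilates every relation of Proposition~\ref{prop_triangles_Pescod}: for the triangle $\Sigma^l a\to e_1\oplus e_2\to b\to\Sigma^{l+1}a$ with $e_1=\{a_0-l,b_0\}$ and $e_2=\{b_1,a_1-l\}$, the contributions of $a_0-l$, $a_1-l$, $b_0$, $b_1$ cancel in $w(\Sigma^l a)-w(e_1)-w(e_2)+w(b)$. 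Since each Auslander--Reiten $(q+3)$-angle is a tower of such triangles, $w$ descends to a homomorphism $K_0(\mathcal{C}_q(A_p))\to\mathbb{Z}$, and $w(d_1)=(-1)^0+(-1)^{1+q}=2\neq0$ because $q$ is odd; hence for $p$ odd the group surjects onto $\mathbb{Z}$ and has free rank at least one.

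It then remains to bound $K_0$ from above, and this is where I expect the real work to lie. The aim is to show, using the exchange relations of Proposition~\ref{prop_triangles_Pescod} together with $[\Sigma X]=-[X]$, that $K_0$ is cyclic on the class of a single shortest diagonal $d_1$: the class of each longer diagonal should be reduced by induction on length, crossing it with a suitable partner and reading off the Ptolemy-type relation $[c]+[b]=[e_1]+[e_2]$, while the rotation relation identifies all shortest diagonals with $\pm[d_1]$. Granting cyclicity, the conclusion is immediate: for $p$ odd, $K_0=\mathbb{Z}\,[d_1]$ with $[d_1]$ of infinite order by $w$, so $K_0\cong\mathbb{Z}$; for $p$ even, $K_0=\mathbb{Z}\,[d_1]$ with $2[d_1]=0$, and exhibiting a crossing pair for which exactly one of $e_1,e_2$ is a boundary edge (class $0$) gives a relation $[d_1]+[d_1]=[d_1]$, forcing $[d_1]=0$ and $K_0=0$. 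The main obstacle is precisely the cyclicity step: choosing, for each length, crossing partners whose exchange strictly simplifies the class, tracking the shifts $\Sigma^l$, and ruling out residual $2$-torsion among the middle diagonals of length $1+2q$, for which a single exchange relates a class only to a rotate of itself. Equivalently, one may hope to identify the $p$ relations from Corollary B with the matrix $I+\Phi$ for the Coxeter transformation $\Phi$ of $A_p$, whose cokernel is $0$ for $p$ even and $\mathbb{Z}$ for $p$ odd; establishing this identification from the geometric model is the crux.
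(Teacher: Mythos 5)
Your setup coincides with the paper's: apply Corollary B to the additive closure of an explicit $(q+2)$-angulation of $P$. But your proposal stops exactly where the real proof has to start. The entire content of Corollary B's presentation is the list of Auslander--Reiten $(q+3)$-angles of $\mathcal{T}$, one relation per indecomposable, and you never compute them: the ``cyclicity step'', which you yourself flag as the crux and leave as an aim, is precisely the determination of these relations. The paper resolves this not with the fan $d_k=\{0,1+kq\}$ but with a staircase-shaped $(q+2)$-angulation, $T_0=\{0,q+1\}$ and $T_i=\{N-i,(1+i)q+1-i\}$ for $1\leq i\leq p-1$, chosen so that, using Lemmas \ref{lemma_morphisms_from} and \ref{lemma_morphisms_to} and Proposition \ref{prop_triangles_Pescod}, every Auslander--Reiten $(q+3)$-angle can be written down explicitly with almost all middle terms equal to zero: the resulting relations collapse to $[T_1]=[T_{p-2}]=0$ and $[T_{i+1}]=[T_{i-1}]$ for $1\leq i\leq p-2$, from which both cases of the statement are immediate, with no need for a separate lower-bound argument since the presentation is now completely explicit. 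Without an analogous computation for your fan --- whose Auslander--Reiten $(q+3)$-angles are not obviously as degenerate --- neither the claimed induction on the length of a diagonal nor the hoped-for identification of the relation matrix with $I+\Phi$ is substantiated, so the upper bound on $K_0$ is simply missing.

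There is also a gap in your lower-bound argument for $p$ odd. You verify that $w$ kills the relations coming from triangles with indecomposable end terms (those classified by Proposition \ref{prop_triangles_Pescod}), but the relations you must kill are the Auslander--Reiten $(q+3)$-angle relations of Corollary B, and each of these is a telescoping sum of relations of tower triangles $X_{i+1}\rightarrow T_i\rightarrow X_i\rightarrow\Sigma X_{i+1}$ whose end terms $X_i$ are in general decomposable, hence not covered by your check. To repair this you would have to verify $w$ on arbitrary triangles, or invoke Xiao--Zhu's theorem to reduce to Auslander--Reiten triangles, or compute the $(q+3)$-angles explicitly --- which is exactly the computation you deferred. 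So the weight-function idea is an attractive alternative route to showing $[d_1]$ has infinite order, but as written it does not close the argument, and in the paper's approach it is unnecessary.
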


\begin{proof}
Consider the $(q+2)$-angulation $T={T_0,\,\dots,\, T_{p-1}}$, where
\begin{align*}
    T_0=\{0,q+1\} \text{ and } T_i=\{N-i, (1+i)q+1-i\}, \text{ for $1\leq i\leq p-1$},
\end{align*}
see Figure \ref{fig:fig_T_angulation}. Note that by \cite[proposition 2.14]{MJ} this corresponds to the $(q+1)$-cluster tilting object $T_0\oplus\dots\oplus T_{p-1}$. Let $\mathcal{T}:=\text{add}(T_0\oplus\dots\oplus T_{p-1})\subseteq \mathcal{C}_q(A_p)$ be the corresponding $(q+1)$-cluster tilting subcategory.
\begin{figure}
  \centering
    \begin{tikzpicture}[scale=3]
      \draw (0,0) circle (1cm);

     \draw (0:1.07cm) node{$\scriptstyle 0$};
     \draw (-10:1.12cm) node{$\scriptstyle N-1$};
     \draw (-20:1.12cm) node{$\scriptstyle N-2$};
     \draw (-60:1.12cm) node{$\scriptstyle N-p+1$};
     \draw (80:1.07cm) node{$\scriptstyle q+1$};
     \draw (120:1.17cm) node{$\scriptstyle 2q$};
     \draw (160:1.14cm) node{$\scriptstyle 3q-1$};
     \draw (-140:1.19cm) node{$\scriptstyle pq-p+2$};

     
     \draw [red] (0:1cm) -- (80:1cm);
     \draw [red] (-10:1cm) -- (120:1cm);
     \draw [red] (-20:1cm) -- (160:1cm);
     \draw [red] (-60:1cm) -- (-140:1cm);
     
     \draw [red] (-90:0.35cm) node{$\scriptstyle \vdots$};
     \draw [red] (80:0.82cm) node{$\scriptstyle T_0$};
     \draw [red] (122:0.78cm) node{$\scriptstyle T_1$};
     \draw [red] (168:0.78cm) node{$\scriptstyle T_2$};
     \draw [red] (-138:0.82cm) node{$\scriptstyle T_{p-1}$};
    \end{tikzpicture}
    \caption{The $(q+2)$-angulation $T$.}
    \label{fig:fig_T_angulation}
\end{figure}
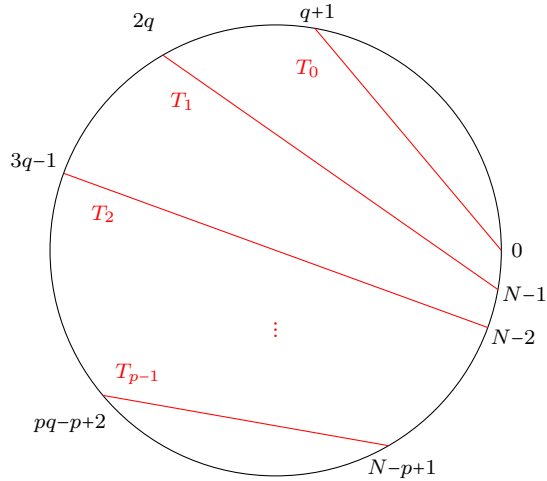
We want to find the Auslander--Reiten $(q+3)$-angle in $\mathcal{T}$ starting and ending at $T_i$ for $0\leq i\leq p-1$.


Consider $i=0$. By Lemma \ref{lemma_morphisms_to} there are no non-zero morphisms $T_i\rightarrow T_0$ for $i\neq 0$, so that $\tau_0 : 0\rightarrow T_0$ is a right almost split morphism in $\mathcal{T}$.
Consider $-q+2\leq j\leq -1$. Note that, since $\Sigma^j T_0=\{ -j, q+1-j \}$, Lemma \ref{lemma_morphisms_from} can be used to check that for all indecomposables $T_i$ in $\mathcal{T}$, we have that $\Hom(T_i,\Sigma^j T_0)=0$. Hence $\tau_{-j}: 0\rightarrow \Sigma^j T_0$ is a $\mathcal{T}$-cover for any $-q+2\leq j\leq -1$. Moreover, by Lemma \ref{lemma_morphisms_to}, we have that $\tau_{q-1}:T_1\rightarrow \Sigma^{-q+1} T_0$ is a $\mathcal{T}$-cover.
Using Proposition \ref{prop_triangles_Pescod}, the morphism $\tau_{q-1}$ extends to the non-split triangle
\begin{align*}
    \Sigma T_0\rightarrow T_1\xrightarrow{\tau_{q-1}}\Sigma^{-q+1} T_0\rightarrow \Sigma^2 T_0.
\end{align*}
Applying Lemma \ref{lemma_morphisms_to} to $\Sigma T_0=\{N-1, q\}$, we see that $\Hom(\mathcal{T}, \Sigma T_0)=0$. Hence $\tau_q: 0\rightarrow \Sigma T_0$ is a $\mathcal{T}$-cover.
The Auslander--Reiten $(q+3)$-angle starting and ending at $T_0$ is then the one corresponding to the following tower of triangles:
\begin{align*}
\begin{gathered}
\xymatrix@!0 @C=3em @R=3em{
& 0\ar[rd]^{\tau_{q}}\ar[rr] && T_{1}\ar[rd]^{\tau_{q-1}}\ar[rr]&& 0\ar[rd]^{\tau_{q-2}}\ar[r] &&\cdots&\ar[r]& 0\ar[rr]\ar[rd]^{\tau_1} &&0\ar[rd]^{\tau_0}
\\
T_0\ar[ru]&& \Sigma T_{0}\ar@{~>}[ll]\ar[ru]&& \Sigma^{-q+1} T_{0}\ar@{~>}[ll]\ar[ru]&& \Sigma^{-q+2} T_0\ar@{~>}[ll]&\cdots& \Sigma^{-2} T_{0}\ar[ru]&& \Sigma^{-1} T_{0} \ar@{~>}[ll]\ar[ru]&& T_0.\ar@{~>}[ll]
}
\end{gathered}
\end{align*}

In a similar way, we can find the remaining Auslander--Reiten $(q+3)$-angles. These are the ones corresponding to the following towers of triangles:
\begin{align*}
\begin{gathered}
\xymatrix@!0 @C=2.7em @R=2.7em{
& 0\ar[rd]^{\tau_{q}}\ar[rr] && T_{i+1}\ar[rd]^{\tau_{q-1}}\ar[rr]&&0\ar[rd]^{\tau_{q-2}}\ar[r]&&\cdots&\ar[r]& 0\ar[rr]\ar[rd]^{\tau_2} && T_{i-1}\ar[rr]\ar[rd]^{\tau_1} &&0\ar[rd]^{\tau_0}
\\
T_{i}\ar[ru]&& \Sigma T_{i}\ar@{~>}[ll]\ar[ru]&& A_i\ar@{~>}[ll]\ar[ru]&&\Sigma A_i\ar@{~>}[ll]&\cdots& \Sigma^{q-4}A_i\ar[ru]&&\Sigma^{q-3}A_i\ar@{~>}[ll]\ar[ru]&& \Sigma^{-1} T_{i} \ar@{~>}[ll]\ar[ru]&& T_{i},\ar@{~>}[ll]
}
\end{gathered}
\end{align*}
where $A_i:=\{(i+2)q-i, (i+1)q-i-1\}$, for $1\leq i\leq p-2$, and
\begin{align*}
\begin{gathered}
\xymatrix@!0 @C=3em @R=3em{
& 0\ar[rd]^{\tau_{q}}\ar[rr] && 0\ar[rd]^{\tau_{q-1}}\ar[r]&&\cdots&\ar[r]& 0\ar[rr]\ar[rd]^{\tau_2} && T_{p-2}\ar[rr]\ar[rd]^{\tau_1} &&0\ar[rd]^{\tau_0}
\\
T_{p-1}\ar[ru]&& \Sigma T_{p-1}\ar@{~>}[ll]\ar[ru]&& \Sigma^{2} T_{p-1}\ar@{~>}[ll]&\cdots& \Sigma^{q-2}T_{p-1}\ar[ru]&&\Sigma^{q-1} T_{p-1}\ar@{~>}[ll]\ar[ru]&& \Sigma^{-1} T_{p-1} \ar@{~>}[ll]\ar[ru]&& T_{p-1}.\ar@{~>}[ll]
}
\end{gathered}
\end{align*}

Recall that by Corollary B, we have that
\begin{align*}
    K_0(\mathcal{C}_q(A_p))\cong K_0^{sp}(\mathcal{T})\Bigg/\Bigg\langle \xymatrix{\displaystyle\sum_{i=0}^{q} (-1)^i [\overline{T}_i]\,\,\,\Bigg\mid\,\,\, {\begin{matrix}M\in\text{Ind}\mathcal{T} \text{ with Auslander--Reiten } (q+3)\text{-angle }\\
    M\rightarrow \overline{T}_q\rightarrow\dots\rightarrow \overline{T}_0\rightarrow M\rightarrow \Sigma^{q+1}M\end{matrix}}
   }\Bigg\rangle.
\end{align*}
Using the Auslander--Reiten $(q+3)$-angles found and the fact that $q$ is odd, we obtain that in the quotient group on the right hand side, we have
\begin{align*}
    [T_1]=[T_{p-2}]=0 \,\,\, \text{ and } \,\,\, [T_{i-1}]=[T_{i+1}]\,\,\, \text{ for }\,\,\, 1\leq i\leq p-2.
\end{align*}
This implies that
\begin{itemize}
    \item if $p$ is even, then $[T_i]=0$ for all $0\leq i\leq p-1$,
    \item if $p$ is odd, then $0=[T_1]=\dots [T_{p-2}]$ and $0\neq[T_0]=\dots=[T_{p-1}]$.
\end{itemize}
Hence
\begin{align*}
    K_0(\mathcal{C}_q(A_p))\cong 
    \begin{cases}
    0 &\text{if $p$ is even,}\\
    \mathbb{Z} & \text{if $p$ is odd.}
    \end{cases}
\end{align*}
\end{proof}
\section{A higher angulated cluster category of type $A$}\label{section_eg2}
Let $p$ and $d$ be positive integers. We denote by $A^d_p$ the $(d-1)$-st higher Auslander $k$-algebra of linearly oriented $A_p$, see \cite[Section 3]{OT}. This is a $d$-representation finite algebra, in the sense that it has a $d$-cluster tilting module and $\text{gldim}(A^d_p)\leq d$, see \cite[Definition 2.19]{IOP}.
Let $\mmod A_p^d$ be the category of finitely generated $A_p^d$-modules and $\mathcal{D}^b(\mmod A_p^d)$ be its bounded derived category. We denote by $\mathbb{S}$ its Serre functor and by $\Sigma$ its suspension functor.

\begin{defn}[{\cite[construction 5.13]{OT}}]
For $\delta\geq d$, the \textit{$\delta$-Amiot cluster category of $A^d_p$} is defined to be
\begin{align*}
    \mathcal{C}^\delta({A_p^d})=\text{triangulated hull of }\mathcal{D}^b(\mmod A_p^d)/(\mathbb{S}_\delta),
\end{align*}
where $\mathbb{S}_\delta:=\mathbb{S}\Sigma^{-\delta}$.
\end{defn}

\begin{remark}
The category $\mathcal{C}^\delta({A_p^d})$ is a triangulated category containing the orbit category $\mathcal{D}^b(\mmod A_p^d)/(\mathbb{S}_\delta)$. We do not give a formal definition of triangulated hull here. Note that, by \cite[Theorem 5.14]{OT}, we have that if $\delta>d$, then $\mathcal{C}^\delta({A_p^d})$ is $\Hom$-finite and $\delta$-Calabi--Yau.
\end{remark}

\begin{remark}
Let $M$ be the unique $d$-cluster tilting object in $\mmod A^d_p$. Then
\begin{align*}
    \mathcal{U}:= \text{add}\{ \Sigma^{id}M \mid i\in\mathbb{Z} \}\subseteq \mathcal{D}^b(\mmod A^d_p)
\end{align*}
is a $d$-cluster tilting subcategory by \cite[Theorem 1.21]{I}.
\end{remark}

\begin{defn}[{\cite[Definition 5.22]{OT}}]
The \textit{$(d+2)$-angulated cluster category of $A^d_p$} is defined to be the orbit category
\begin{align*}
    \mathcal{O}({A^d_p})=\mathcal{U}/(\mathbb{S}_{2d})
\end{align*}
\end{defn}

\begin{remark}
Note that $\mathcal{O}({A^d_p})$ comes with an inclusion into $\mathcal{D}^b(\mmod A_p^d)/(\mathbb{S}_{2d})\subseteq \mathcal{C}^{2d}({A_p^d})$. Moreover, by \cite[Theorem 5.24]{OT}, we have that $\mathcal{O}({A^d_p})\subseteq\mathcal{C}^{2d}({A_p^d})$ is $d$-cluster tilting and $\mathcal{O}({A^d_p})$ is $(d+2)$-angulated.
\end{remark}

\begin{notation}
Let $Z$ be a \textit{cyclically ordered set} with $p+2d+1$ elements. We can think of $Z$ as marked points on a circle labeled $1$ to $p+2d+1$ in the anti-clockwise direction. Given three points $u,v,w$, we write $u<v<w$ if they appear in the order $u,v,w$ when going through the points in the anti-clockwise direction. Moreover, given two distinct points $u$ and $v$, we can consider the interval of points $[u,v]$ and in this ``$<$'' is a total order.

For a point $v$, we denote by $v^+$ its successor and by $v^-$ its predecessor in the anti-clockwise direction. We say that two points are neighbours if one is the successor of the other.
\end{notation}

\begin{lemma}[{\cite[proposition 6.10]{OT}}]
There is a bijection
\begin{align*}
   \text{Ind} \mathcal{O}({A^d_p})\longrightarrow \{ X=\{x_0,\,\dots,\, x_d\}\subset Z\mid X \text{ contains no neighbouring points}   \}.
\end{align*}
We will use it to identify the indecomposable objects of $\mathcal{O}(A^d_p)$ with the sets $X$. For $X=\{x_0,\,\dots,\, x_d\}\in \text{Ind} \mathcal{O}(A^d_p)$, we have that
\begin{align*}
    \Sigma^d X= \mathbb{S}_d X=\{ x_0^-,\,\dots,\, x_d^- \}.
\end{align*}
\end{lemma}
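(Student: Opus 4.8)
This statement is quoted from \cite[Proposition 6.10]{OT}, so strictly one may simply cite it; nonetheless, here is how I would reconstruct it. The clean half of the assertion is the functorial identity $\Sigma^d\cong\mathbb{S}_d$, and the plan is to isolate this first. By construction $\mathcal{O}(A_p^d)=\mathcal{U}/(\mathbb{S}_{2d})$ is the orbit category of $\mathcal{U}$ by the autoequivalence $\mathbb{S}_{2d}=\mathbb{S}\Sigma^{-2d}$; in an orbit category the defining autoequivalence becomes isomorphic to the identity, so $\mathbb{S}\Sigma^{-2d}\cong\mathrm{id}$ and hence $\mathbb{S}\cong\Sigma^{2d}$ on $\mathcal{O}(A_p^d)$. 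Consequently $\mathbb{S}_d=\mathbb{S}\Sigma^{-d}\cong\Sigma^{2d}\Sigma^{-d}=\Sigma^d$, which establishes $\Sigma^d X=\mathbb{S}_d X$ at the level of functors, independently of any combinatorial model.

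For the bijection itself, the plan is to feed in the Oppermann--Thomas combinatorial parametrization of the indecomposables of the $d$-cluster tilting subcategory $\mathcal{U}=\mathrm{add}\{\Sigma^{id}M\mid i\in\mathbb{Z}\}$. For $d=1$ this is the familiar fact that the indecomposables of the cluster category of type $A_p$ are the diagonals of a $(p+3)$-gon, that is, the $2$-subsets of a cyclic set of size $p+3$ with no two neighbouring vertices; the general case is the higher analogue in which indecomposables are indexed by $(d+1)$-subsets. Concretely I would first index the indecomposables of $\mathcal{D}^b(\mmod A_p^d)$ lying in $\mathcal{U}$ by $(d+1)$-subsets of a \emph{linearly} ordered set, the internal condition (no neighbouring points) recording the vanishing of the intermediate self-extensions that pins an object to $\mathcal{U}$. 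I would then compute $\mathbb{S}_{2d}$ in these coordinates and check that it acts by a shift of each coordinate, so that passing to the $\mathbb{S}_{2d}$-orbit folds the linear index set into the cyclically ordered set $Z$ of size $p+2d+1$, carrying the internal condition to the stated no-neighbouring-points condition.

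With the bijection in place, the remaining task is to read off the action of $\Sigma^d$ (equivalently $\mathbb{S}_d$) in these coordinates and confirm it is the coordinatewise predecessor map $\{x_0,\dots,x_d\}\mapsto\{x_0^-,\dots,x_d^-\}$. Since $\Sigma$ acts on the derived-category indices by an explicit shift, I would track how $\Sigma^d$ moves each coordinate and verify that, after folding into $Z$, it becomes a single clockwise rotation by one step, consistent with $(\Sigma^d)^2\cong\mathbb{S}\cong\Sigma^{2d}$ acting as a two-step rotation. The main obstacle is not the functor identity, which is formal, but establishing the combinatorial model and performing the coordinate computations correctly: one must set up the parametrization of $\mathcal{U}$, identify the precise shifts by which $\Sigma$ and $\mathbb{S}_{2d}$ act, and check that the quotient has exactly $p+2d+1$ cyclic positions with the neighbouring-point data preserved. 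This bookkeeping is precisely the content of \cite[Section 6]{OT}, which I would follow rather than redo from scratch.
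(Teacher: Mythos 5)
Your proposal is correct and matches the paper's treatment: the paper gives no proof of this lemma, simply importing it from \cite[Proposition 6.10]{OT}, and you likewise defer the combinatorial parametrization and coordinate bookkeeping to \cite[Section 6]{OT}. Your additional observation that $\mathbb{S}_d\cong\Sigma^d$ follows formally from $\mathbb{S}_{2d}\cong\mathrm{id}$ on the orbit category $\mathcal{O}(A_p^d)=\mathcal{U}/(\mathbb{S}_{2d})$ is a correct and clean way to isolate the functorial half of the statement before any combinatorics is needed.
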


\begin{defn}
For $X,\, Y\in \text{Ind}\mathcal{O}({A^d_p})$, we say that $X$ \textit{intertwines} $Y$  if we can write $X=\{x_0,\,\dots,\, x_d\}$ and $Y=\{y_0,\,\dots,\, y_d\}$ such that
\begin{align*}
    x_0<y_0<x_1<\dots<y_{d-1}<x_d<y_d<x_0.
\end{align*}
Note that in this case also $Y$ intertwines $X$.
\end{defn}

\begin{lemma}[{\cite[proposition 6.1]{OT}}]
Given $X$ and $Y$ in Ind$\mathcal{O}({A^d_p})$, we have that $\Ext^d_{\mathcal{O}({A^d_p})}(X,Y)\neq 0$ if and only if $X$ intertwines $Y$. In this case, $\Ext^d_{\mathcal{O}({A^d_p})}(X,Y)$ is one-dimensional over $k$.
\end{lemma}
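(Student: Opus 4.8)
This lemma is quoted from \cite[Proposition 6.1]{OT}, so the plan is to reconstruct the argument from the orbit--category presentation $\mathcal{O}(A^d_p)=\mathcal{U}/(\mathbb{S}_{2d})$ together with the combinatorics of the $d$-cluster tilting subcategory $\mathcal{U}\subseteq\mathcal{D}^b(\mmod A^d_p)$. First I would unwind the meaning of $\Ext^d$ in the $(d+2)$-angulated category: by definition of the $d$-suspension, $\Ext^d_{\mathcal{O}(A^d_p)}(X,Y)=\Hom_{\mathcal{O}(A^d_p)}(X,\Sigma^d Y)$, and by the preceding lemma the $d$-suspension coincides on $\mathcal{O}(A^d_p)$ with $\mathbb{S}_d=\mathbb{S}\Sigma^{-d}$. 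Since $\mathbb{S}_{2d}=\mathbb{S}_d^{\,2}$, choosing lifts of $X$ and $Y$ to indecomposables of $\mathcal{U}$ turns the orbit--category Hom into $\bigoplus_{i\in\mathbb{Z}}\Hom_{\mathcal{D}^b(\mmod A^d_p)}(X,\mathbb{S}_d^{\,2i+1}Y)$, a sum of morphism spaces in $\mathcal{D}^b(\mmod A^d_p)$ taken over the odd powers of the index shift $\mathbb{S}_d\colon\{x_0,\dots,x_d\}\mapsto\{x_0^-,\dots,x_d^-\}$.

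The next step is to feed in the combinatorial description of morphism spaces between the indecomposables of $\mathcal{U}$ inside $\mathcal{D}^b(\mmod A^d_p)$. The key input is that each such space is either $0$ or one-dimensional, its non-vanishing being governed by an explicit ``hammock'' inequality on the two $(d+1)$-subsets, exactly the higher analogue of the regions $H^{\pm}(a)$ used in Section \ref{section_eg1}. Granting this, I would argue that the odd-shift orbit sum $\bigoplus_i\Hom_{\mathcal{D}^b(\mmod A^d_p)}(X,\mathbb{S}_d^{\,2i+1}Y)$ has at most one non-zero summand, so that $\Ext^d_{\mathcal{O}(A^d_p)}(X,Y)$ is at most one-dimensional, and that some summand is non-zero precisely when the cyclic order on $Z$ lets us place the entries of $X$ and of $\mathbb{S}_d^{\,2i+1}Y$ in the alternating pattern $x_0<y_0<x_1<\dots<y_{d-1}<x_d<y_d<x_0$. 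Translating back through the shift $\mathbb{S}_d$, this alternation is exactly the condition that $X$ intertwines $Y$, which yields both the ``if and only if'' and the one-dimensionality.

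As a consistency check, and as an alternative route to the one-dimensional count, I would note that $\mathcal{C}^{2d}(A^d_p)$ is $2d$-Calabi--Yau, whence $\Ext^d_{\mathcal{O}(A^d_p)}(X,Y)=\Hom(X,\Sigma^d Y)\cong D\Hom(\Sigma^d Y,\Sigma^{2d}X)=D\,\Ext^d_{\mathcal{O}(A^d_p)}(Y,X)$; this reproduces the symmetry of the intertwining relation already recorded in the definition and forces the two spaces to have equal dimension.

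The hard part will be the combinatorial heart of the second paragraph: one needs the precise morphism-space formula for $\mathcal{U}\subseteq\mathcal{D}^b(\mmod A^d_p)$, equivalently the higher Auslander--Reiten combinatorics of the higher Auslander algebra $A^d_p$ of type $A$, and one must verify carefully that the $\mathbb{S}_{2d}$-orbit sum collapses to a single one-dimensional term under intertwining and to zero otherwise. This is where the bookkeeping with the cyclic order on $Z$ and the predecessor map $(-)^-$ genuinely enters, and it is the step that uses the geometry of the Oppermann--Thomas model rather than only formal properties of orbit and Calabi--Yau categories.
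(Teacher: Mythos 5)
First, a point of context: the paper does not prove this lemma at all --- it is imported verbatim from \cite[Proposition 6.1]{OT}, as the citation bracket in the statement indicates. So there is no in-paper argument to compare against; what you have written is an attempted reconstruction of Oppermann--Thomas's proof. In broad outline your strategy (unwind the orbit category $\mathcal{O}({A^d_p})=\mathcal{U}/(\mathbb{S}_{2d})$, reduce to the combinatorial description of $\Hom$-spaces between indecomposables of $\mathcal{U}$ inside $\mathcal{D}^b(\mmod A^d_p)$, and check that the orbit sum collapses to at most one one-dimensional summand, exactly under intertwining) is indeed the right one.

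However, your reduction contains a concrete error, and the remainder is a plan rather than a proof. The error: you assert $\mathbb{S}_{2d}=\mathbb{S}_d^{\,2}$, but $\mathbb{S}_d^{\,2}=\mathbb{S}^2\Sigma^{-2d}=\mathbb{S}\circ\mathbb{S}_{2d}$, which differs from $\mathbb{S}_{2d}=\mathbb{S}\Sigma^{-2d}$ by a full Serre twist. Relatedly, the isomorphism $\Sigma^d\cong\mathbb{S}_d$ holds only \emph{after} passing to the orbit category (it amounts to $\mathbb{S}_{2d}\cong \mathrm{id}$ there), so it cannot be used to rewrite the individual summands of the orbit-category $\Hom$-space, which are morphism spaces in $\mathcal{D}^b(\mmod A^d_p)$, where no such isomorphism exists. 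The correct unwinding is $\Hom_{\mathcal{O}}(X,\Sigma^d Y)\cong\bigoplus_{i\in\mathbb{Z}}\Hom_{\mathcal{D}^b}(X,\mathbb{S}_{2d}^{\,i}\Sigma^d Y)$, whereas your sum $\bigoplus_{i}\Hom_{\mathcal{D}^b}(X,\mathbb{S}_d^{\,2i+1}Y)$ shares with it only the single term $i=0$ (using $\mathbb{S}_d=\mathbb{S}_{2d}\Sigma^d$); the objects $\mathbb{S}_d^{\,2i+1}Y$ for $i\neq 0$ need not be lifts of $\Sigma^d Y$ at all. Beyond this, the entire mathematical content of the lemma sits in the step you explicitly defer: the $\Hom$-formula for indecomposables of $\mathcal{U}$ in $\mathcal{D}^b(\mmod A^d_p)$ and the verification that at most one orbit summand survives, precisely when $X$ intertwines $Y$. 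Since that deferred step \emph{is} the derived-category form of the statement being proved, the proposal does not constitute a proof; it would become one only by importing the Section 3 and Section 6 combinatorics of \cite{OT}, at which point one may as well cite \cite[Proposition 6.1]{OT} directly, as the paper does. (Your Calabi--Yau duality check is sound, but it only yields the symmetry $\Ext^d(X,Y)\cong D\Ext^d(Y,X)$, not non-vanishing or one-dimensionality.)
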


\begin{lemma}[{\cite[proposition 6.11]{OT}}]\label{lemma_angles_intert}
Let $X=\{x_0,\,\dots,\, x_d\}$ and $Y=\{y_0,\,\dots,\, y_d\}\in\text{Ind}\mathcal{O}({A^d_p})$ be such that
\begin{align*}
    x_0<y_0<x_1<\dots<y_{d-1}<x_d<y_d<x_0,
\end{align*}
so $X$ intertwines $Y$. Then there is a $(d+2)$-angle in $\mathcal{O}({A^d_p})$ of the form
\begin{align*}
    X\longrightarrow E_d\longrightarrow \dots\longrightarrow E_1\longrightarrow Y\longrightarrow \Sigma^d X \,\,\text{ with }\,\, E_r=\bigoplus_{\begin{smallmatrix}I\subseteq \{0,\dots, d\}\\ |I|\,=\,r\end{smallmatrix}} \{ x_i\mid i\in I \}\cup \{y_j\mid j\not\in I\},
\end{align*}
 where $\{ x_i\mid i\in I \}\cup \{y_j\mid j\not\in I\}$ is interpreted as zero if it contains neighbouring points.
\end{lemma}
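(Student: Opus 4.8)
The plan is to reduce the statement to the essential uniqueness of the non-split angle on $(X,Y)$ and then to pin down its middle terms geometrically, mirroring the analysis of triangles in $\mathcal{C}_q(A_p)$ carried out in the first example. Since $X$ intertwines $Y$---equivalently $Y$ intertwines $X$---\cite[Proposition 6.1]{OT} gives $\Ext^d(Y,X)\cong k$; as $\mathcal{O}(A_p^d)\subseteq\mathcal{C}^{2d}(A_p^d)$ is $d$-cluster tilting and $\Sigma^d X\in\mathcal{O}(A_p^d)$, this one-dimensional space is exactly the morphisms $Y\rightarrow\Sigma^d X$. A nonzero such morphism $\varepsilon$ completes, by the axioms of a $(d+2)$-angulated category, to a $(d+2)$-angle $X\rightarrow E'_d\rightarrow\dots\rightarrow E'_1\rightarrow Y\xrightarrow{\varepsilon}\Sigma^d X$, and, exactly as for triangles, this angle is determined up to isomorphism by $\varepsilon$. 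Since $\Ext^d(Y,X)$ is one-dimensional, any two non-split angles on $(X,Y)$ are isomorphic, so it suffices to identify the middle terms $E'_r$ of this single angle.

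Second, by \cite[Theorem 1]{GKO} such an angle is realized by a tower of triangles in $\mathcal{C}^{2d}(A_p^d)$ with all outer terms in $\mathcal{O}(A_p^d)$; concretely one resolves the cone of $\varepsilon$ by successive $\mathcal{O}(A_p^d)$-covers as in \cite[Corollary 3.3]{IY} and Definition \ref{defn_index}, so that the $E'_r$ are precisely these covers. To determine them I would mimic Lemmas \ref{lemma_PD411} and \ref{lemma_PD412}: any indecomposable summand $W$ of a middle term intertwines neither $X$ nor $Y$, whereas every indecomposable intertwining $W$ must intertwine at least one of $X$ and $Y$. Translating the intertwining condition through \cite[Proposition 6.1]{OT} and the alternating order $x_0<y_0<x_1<\dots<y_d<x_0$, these two constraints force the point set of $W$ to be of the mixed form $\{x_i\mid i\in I\}\cup\{y_j\mid j\notin I\}$ for some $I\subseteq\{0,\dots,d\}$, leaving exactly the candidates listed in the statement, with those containing neighbouring points excluded as the zero object.

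Finally, I would grade the candidates by $r=|I|$ and show that each admissible $I$ occurs with multiplicity exactly one and in the correct term of the tower, so that the $r$-th cover is $E_r=\bigoplus_{|I|=r}(\{x_i\mid i\in I\}\cup\{y_j\mid j\notin I\})$; a rank or Euler-characteristic count, using that every relevant $\Ext^d$ between intertwining objects is one-dimensional, should fix these multiplicities. The main obstacle is this last combinatorial bookkeeping: verifying that no admissible subset is repeated or omitted, that the degenerate summands with neighbouring points drop out compatibly with the connecting maps of the tower, and---crucially---that the constructed angle is genuinely non-split (its class in the one-dimensional $\Ext^d(Y,X)$ being nonzero) rather than a direct sum of shorter angles, which is what finally matches it, via the uniqueness of the first step, to the angle completing $\varepsilon$.
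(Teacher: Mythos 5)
The paper itself contains no proof of this lemma: it is quoted directly from \cite[Proposition 6.11]{OT}. So your argument can only be judged on its own terms, and as written it has genuine gaps.

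The first gap is your uniqueness claim. In a $(d+2)$-angulated category with $d\geq 2$, a $(d+2)$-angle is \emph{not} determined up to isomorphism by its connecting morphism $\varepsilon\colon Y\rightarrow\Sigma^{d}X$: by the axioms of \cite{GKO}, $(d+2)$-angles are closed under direct sums and contain rotations of trivial angles, so one can insert a summand $Z\xrightarrow{\mathrm{id}}Z$ into two adjacent middle positions without changing $X$, $Y$ or $\varepsilon$. Hence ``any two non-split angles on $(X,Y)$ are isomorphic'' is false for $d\geq 2$, and the final identification in your argument (``via the uniqueness of the first step'') only makes sense for \emph{minimal} completions; you would need to prove both that your tower of $\mathcal{O}(A^d_p)$-covers is minimal and that the angle in the statement is minimal, neither of which is addressed.

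The second, more serious gap is combinatorial. The two constraints you impose on an indecomposable summand $W$ of a middle term --- $W$ intertwines neither $X$ nor $Y$, and everything intertwining $W$ intertwines $X$ or $Y$ --- do \emph{not} force $W$ to be of the mixed form $\{x_i\mid i\in I\}\cup\{y_j\mid j\notin I\}$. Already for $d=1$ the diagonal $\{x_0,y_0\}$ satisfies both constraints (any diagonal crossing it must cross $X$ or $Y$), yet it is not of mixed form and is not a summand of the exchange triangle $X\rightarrow\{x_0,y_1\}\oplus\{x_1,y_0\}\rightarrow Y\rightarrow\Sigma X$. This is exactly the phenomenon the paper has to confront in $\mathcal{C}_q(A_p)$: in Proposition \ref{prop_triangles_Pescod}, the analogues of your two constraints (Lemmas \ref{lemma_PD411} and \ref{lemma_PD412}) leave four candidates $e_1,\dots,e_4$, and two of these ($e_3$ and $e_4$, where the endpoints are paired with matching indices) are eliminated only by Lemma \ref{lemma_PD424}, i.e.\ by the extra requirement that a middle summand has non-vanishing extensions against the \emph{shifted} end terms; the higher analogue would be that $W$ intertwines $\Sigma^{d}X$ and $\Sigma^{-d}Y$, and your proposal never formulates or proves this. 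Moreover, even your second constraint is not automatic when $d\geq 2$: the proof of Lemma \ref{lemma_PD411} uses a three-term exact sequence, whereas for a $(d+2)$-angle the long exact sequence only propagates non-vanishing of $\Ext^{d}(V,-)$ to an \emph{adjacent} middle term, not all the way to the end terms. Finally, the multiplicity count placing each admissible $I$ exactly once in the term $E_{|I|}$ --- which is the actual content of \cite[Proposition 6.11]{OT} --- is left as ``a rank or Euler-characteristic count should fix these multiplicities''. As it stands the proposal is a plausible strategy, not a proof.
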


\begin{lemma}
We have that $\mathcal{T}\subseteq \mathcal{O}({A^d_p})$ is Oppermann--Thomas cluster tilting if and only if $\rm{Ind }\mathcal{T}$ is a maximal set of non-intertwining elements in $\mathcal{O}({A^d_p})$ of cardinality
\begin{align*}
    \begin{pmatrix} p+d-1\\d \end{pmatrix}.
\end{align*}
\end{lemma}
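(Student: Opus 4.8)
The plan is to translate both defining conditions of an Oppermann--Thomas cluster tilting subcategory into the combinatorics of the cyclically ordered set $Z$. First I would handle condition (a). Since $\mathcal{O}(A_p^d)$ is $d$-cluster tilting in the triangulated category $\mathcal{C}^{2d}(A_p^d)$ with $d$-suspension $\Sigma^d$, for indecomposables $X,Y$ one has $\mathcal{O}(A_p^d)(X,\Sigma^d Y)=\Ext^d_{\mathcal{O}(A_p^d)}(X,Y)$. By the intertwining description of $\Ext^d$ in \cite[Proposition 6.1]{OT}, the condition $\mathcal{O}(A_p^d)(\mathcal{T},\Sigma^d\mathcal{T})=0$ holds precisely when no element of $\mathrm{Ind}\,\mathcal{T}$ intertwines another. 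Thus (a) is equivalent to $\mathrm{Ind}\,\mathcal{T}$ being a set of pairwise non-intertwining elements, and it remains to match condition (b) against maximality together with the cardinality bound.

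For the direction where $\mathcal{T}$ is assumed Oppermann--Thomas cluster tilting, maximality follows from (b) by a splitting argument. If some indecomposable $Y\notin\mathrm{Ind}\,\mathcal{T}$ intertwined nothing in $\mathrm{Ind}\,\mathcal{T}$, then applying (b) to $Y$ gives a $(d+2)$-angle $T_d\to\dots\to T_0\to Y\xrightarrow{\epsilon}\Sigma^d T_d$ with $T_i\in\mathcal{T}$; the connecting map $\epsilon$ lies in $\mathcal{O}(A_p^d)(Y,\Sigma^d T_d)=\Ext^d(Y,T_d)$, which vanishes because $Y$ does not intertwine the summands of $T_d$. As in the triangulated case, a $(d+2)$-angle with zero connecting morphism splits, forcing $Y$ to be a summand of $T_0\in\mathcal{T}$ and hence $Y\in\mathrm{Ind}\,\mathcal{T}$, a contradiction. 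The cardinality $\binom{p+d-1}{d}$ I would then read off from the Oppermann--Thomas identification of such subcategories with triangulations of the cyclic polytope underlying $\mathcal{O}(A_p^d)$, under which the number of internal simplices is constant.

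For the converse, suppose $\mathrm{Ind}\,\mathcal{T}$ is a maximal non-intertwining set of cardinality $\binom{p+d-1}{d}$. Condition (a) is immediate from the first paragraph, so I focus on (b). Given $S'\in\mathcal{O}(A_p^d)$, if $S'\in\mathrm{Ind}\,\mathcal{T}$ the split $(d+2)$-angle suffices; otherwise maximality yields an $X\in\mathrm{Ind}\,\mathcal{T}$ that intertwines $S'$, and Lemma \ref{lemma_angles_intert} produces a $(d+2)$-angle $X\to E_d\to\dots\to E_1\to S'\to\Sigma^d X$. Reading this as $T_d=X$, $T_{d-1}=E_d,\dots,T_0=E_1$ gives exactly the shape required by (b), provided each $E_r$ lies in $\mathrm{add}\,\mathcal{T}$.

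The main obstacle is precisely this last membership. Each $E_r$ is a sum of the mixed diagonals $\{x_i\mid i\in I\}\cup\{y_j\mid j\notin I\}$ built from $X=\{x_0,\dots,x_d\}$ and $S'=\{y_0,\dots,y_d\}$, and I must show each such diagonal is non-intertwining with every element of $\mathrm{Ind}\,\mathcal{T}$, so that maximality places it in $\mathrm{Ind}\,\mathcal{T}$. I expect to establish this directly from the cyclic order: since the points of a mixed diagonal interleave with those of $X$ and $S'$, any diagonal intertwining it would be forced to intertwine $X$ or $S'$, contradicting that $\mathrm{Ind}\,\mathcal{T}$ is non-intertwining and contains $X$. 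Finally, the cardinality hypothesis is what rules out defective maximal families of smaller size that would fail (b); equivalently it pins down $\mathrm{Ind}\,\mathcal{T}$ as a full triangulation rather than merely a maximal non-intertwining set. Verifying the interleaving claim and the cardinality count, most cleanly via the cyclic-polytope model of \cite{OT}, is the technical heart of the argument.
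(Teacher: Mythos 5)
You should first note that the paper's own proof is purely by citation: \cite[Theorem 6.4]{OT} identifies Oppermann--Thomas cluster tilting subcategories of $\mathcal{O}(A_p^d)$ with triangulations of the cyclic polytope $C(p+2d+1,2d)$, and \cite[Theorems 2.3 and 2.4]{OT} identify those triangulations with the maximal non-intertwining sets of cardinality $\binom{p+d-1}{d}$. Your forward direction is correct and in fact more self-contained than the paper's: translating condition (a) into pairwise non-intertwining via the description of $\Ext^d_{\mathcal{O}(A_p^d)}$ is exactly right, and your splitting argument (a zero connecting morphism makes $T_0\rightarrow Y$ a split epimorphism, by the long exact sequence a $(d+2)$-angle induces on $\Hom(Y,-)$) does prove maximality --- although for the cardinality you fall back on the same polytope citation the paper uses.

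The converse, however, has a genuine gap, located precisely at the point you flag as the ``technical heart''. Your plan is to pick one $X\in\mathrm{Ind}\,\mathcal{T}$ intertwining $S'$, take the $(d+2)$-angle of Lemma \ref{lemma_angles_intert}, and show each summand of each $E_r$ lies in $\mathrm{Ind}\,\mathcal{T}$ by arguing that any $W$ intertwining such a summand must intertwine $X$ or $S'$. This cannot work as stated, for two reasons. First, the alternative ``$W$ intertwines $S'$'' is not a contradiction: elements of $\mathrm{Ind}\,\mathcal{T}$ are perfectly allowed to intertwine $S'\notin\mathcal{T}$ (indeed $X$ itself does), so even if the interleaving claim were proved it would not force the summand into $\mathcal{T}$. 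Second, and decisively, the middle terms of that angle genuinely need not lie in $\mathcal{T}$: take $d=1$, $p=3$, so the cyclic set $Z$ has $6$ points, let $\mathrm{Ind}\,\mathcal{T}$ be the fan triangulation $\{1,3\},\{1,4\},\{1,5\}$ (a maximal non-intertwining set of cardinality $\binom{3}{1}=3$), and let $S'=\{2,6\}$. Choosing $X=\{1,4\}$, the angle of Lemma \ref{lemma_angles_intert} is $\{1,4\}\rightarrow\{2,4\}\rightarrow\{2,6\}\rightarrow\Sigma\{1,4\}$ (the other mixed diagonal $\{1,6\}$ contains neighbouring points, hence is zero), and $\{2,4\}\notin\mathcal{T}$; note that $\{2,4\}$ intertwines $\{1,3\}\in\mathrm{Ind}\,\mathcal{T}$ while $\{1,3\}$ also intertwines $S'$, illustrating the first point. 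Condition (b) does hold in this example, but only for a good choice of intertwiner ($X=\{1,3\}$ gives $S'\cong\Sigma X$ with zero middle term); nothing in your argument produces such a choice, and in \cite{OT} the required angles are obtained by an iterated approximation-type construction, not from a single intertwining pair. Similarly, your final assertion that the cardinality hypothesis ``rules out defective maximal families'' is stated rather than proved --- this is again the content of \cite[Theorems 2.3 and 2.4]{OT}. So the ``if'' direction of your proof is incomplete unless you, like the paper, simply invoke the Oppermann--Thomas results.
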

\begin{proof}
By \cite[Theorem 6.4]{OT}, we have that $\mathcal{T}\subseteq \mathcal{O}({A^d_p})$ is Oppermann--Thomas cluster tilting if and only if it corresponds to a triangulation of $C(p+2d+1, 2d)$, in the notation of Oppermann and Thomas, see \cite[Page 2]{OT}. Moreover, by \cite[Theorems 2.3 and 2.4]{OT} such triangulations are precisely maximal sets of non-intertwining elements in $\mathcal{O}({A^d_p})$ of cardinality
\begin{align*}
    \begin{pmatrix} p+d-1\\d \end{pmatrix}.
\end{align*}
\end{proof}

\begin{remark}
Note that, by \cite[Theorem 5.25]{OT}, an object $T\in\mathcal{O}({A_p^d})$ is Oppermann--Thomas cluster tilting if and only if it is $2d$-cluster tilting when seen as an object in $\mathcal{C}^{2d}({A^d_p})$. 
\end{remark}
Hence, if we can find $\mathcal{T}=\text{add}(T)\subseteq \mathcal{O}({A_p^d})$ Oppermann--Thomas cluster tilting, we have
\begin{align*}
    \mathcal{T}\subseteq \mathcal{O}({A_p^d})\subseteq \mathcal{C}^{2d}({A^d_p}),
\end{align*}
where $\mathcal{C}^{2d}({A^d_p})$ is triangulated and $2d$-Calabi--Yau, $\mathcal{O}({A_p^d})$ is closed under $\Sigma^d$ and $d$-cluster tilting in $\mathcal{C}^{2d}({A^d_p})$ and $\mathcal{T}$ is $2d$-cluster tilting in $\mathcal{C}^{2d}({A^d_p})$.
That is, we are in the situation of Setup \ref{setup_T_S} with $\mathcal{S}=\mathcal{O}({A_p^d})$ and $\mathcal{C}=\mathcal{C}^{2d}({A^d_p})$.
We now choose specific values for $d$ and $p$ and, using our results, we find $K_0(\mathcal{C}^{2d}({A^d_p}))$ for these values. The following result will be widely used for the computations in our example.
\begin{proposition}[{\cite[Theorem 5.9]{JT}}]\label{prop_eg_PJ}
If $s_{d+1}\rightarrow \dots\rightarrow s_0\xrightarrow{\gamma}\Sigma^d s_{d+1}$ is a $(d+2)$-angle in $\mathcal{S}$, then
\begin{align*}
\sum_{i=0}^{d+1}(-1)^i\text{index}_{\mathcal{T}}(s_i)=\theta_{\mathcal{T}}([\im F_{\mathcal{T}}(\gamma)]).
\end{align*}
\end{proposition}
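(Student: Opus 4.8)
The plan is to reduce everything to the single-triangle index formula of Proposition~\ref{thm_JT_45}. By \cite[Theorem~1]{GKO} the $(d+2)$-angle $s_{d+1}\to\dots\to s_0\xrightarrow{\gamma}\Sigma^d s_{d+1}$ is induced by a tower of triangles in $\mathcal{C}$ of the form (\ref{diagram_tower_S}); I write its constituent triangles as $Y_{j+1}\to s_{j+1}\to Y_j\xrightarrow{\eta_j}\Sigma Y_{j+1}$ for $0\le j\le d-1$, with the conventions $Y_0:=s_0$ and $Y_d:=s_{d+1}$, so that $\gamma=(\Sigma^{d-1}\eta_{d-1})\circ\dots\circ(\Sigma\eta_1)\circ\eta_0$. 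Applying Proposition~\ref{thm_JT_45} with $\mathcal{U}=\mathcal{T}$ to each triangle yields $\text{index}_{\mathcal{T}}(s_{j+1})=\text{index}_{\mathcal{T}}(Y_{j+1})+\text{index}_{\mathcal{T}}(Y_j)-\theta_{\mathcal{T}}([\im F_{\mathcal{T}}(\eta_j)])$. Taking the alternating sum $\sum_{i=0}^{d+1}(-1)^i\text{index}_{\mathcal{T}}(s_i)$, the $\text{index}_{\mathcal{T}}(Y_j)$ contributions telescope and the two endpoint terms $\text{index}_{\mathcal{T}}(s_0)$, $\text{index}_{\mathcal{T}}(s_{d+1})$ cancel, leaving $\sum_{i=0}^{d+1}(-1)^i\text{index}_{\mathcal{T}}(s_i)=\sum_{j=0}^{d-1}(-1)^j\theta_{\mathcal{T}}([\im F_{\mathcal{T}}(\eta_j)])$. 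The case $d=1$ is precisely Proposition~\ref{thm_JT_45}, so from now on I assume $d\ge 2$.

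It remains to identify this alternating sum of correction terms with the single term $\theta_{\mathcal{T}}([\im F_{\mathcal{T}}(\gamma)])$. The key input is that, since $\mathcal{T}\subseteq\mathcal{S}$ and $\mathcal{S}$ is $d$-cluster tilting, $\mathcal{C}(\mathcal{T},\Sigma^i\mathcal{S})\subseteq\mathcal{C}(\mathcal{S},\Sigma^i\mathcal{S})=0$ for $1\le i\le d-1$, so $F_{\mathcal{T}}(\Sigma^i s)=0$ for all $s\in\mathcal{S}$ in this range. Applying the homological functor $F_{\mathcal{T}}$ to the triangles $T_j$ and their suspensions, I would extract three consequences of this vanishing. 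First, $F_{\mathcal{T}}(\Sigma s_{j+1})=0$ makes $F_{\mathcal{T}}(\eta_j)$ surjective, so $\im F_{\mathcal{T}}(\eta_j)=F_{\mathcal{T}}(\Sigma Y_{j+1})$. Second, a layered-extension induction along the tower, exactly parallel to Lemma~\ref{lemma_F_S_zero}, gives $F_{\mathcal{T}}(\Sigma^i Y_k)=0$ for $1\le i\le k-1$; in particular $F_{\mathcal{T}}(\Sigma Y_{j+1})=0$ once $j\ge 1$, so only the $j=0$ term survives and the sum equals $\theta_{\mathcal{T}}([F_{\mathcal{T}}(\Sigma Y_1)])$. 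Third, the same vanishing produces isomorphisms $F_{\mathcal{T}}(\Sigma^i Y_i)\cong F_{\mathcal{T}}(\Sigma^{i+1}Y_{i+1})$ for $1\le i\le d-2$, chaining to $F_{\mathcal{T}}(\Sigma Y_1)\cong F_{\mathcal{T}}(\Sigma^{d-1}Y_{d-1})$, while $F_{\mathcal{T}}(\Sigma^{d-1}\eta_{d-1})$ becomes injective; since the remaining factors of $\gamma$ are sent to surjections, $\im F_{\mathcal{T}}(\gamma)=\im F_{\mathcal{T}}(\Sigma^{d-1}\eta_{d-1})\cong F_{\mathcal{T}}(\Sigma^{d-1}Y_{d-1})$.

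Putting these together gives $[F_{\mathcal{T}}(\Sigma Y_1)]=[F_{\mathcal{T}}(\Sigma^{d-1}Y_{d-1})]=[\im F_{\mathcal{T}}(\gamma)]$ in $K_0(\mmod\mathcal{T})$, and applying the homomorphism $\theta_{\mathcal{T}}$ turns the telescoped identity into $\sum_{i=0}^{d+1}(-1)^i\text{index}_{\mathcal{T}}(s_i)=\theta_{\mathcal{T}}([\im F_{\mathcal{T}}(\gamma)])$, as required. I expect the only real obstacle to be the careful bookkeeping in the second and third consequences: one must track the exact suspension degrees appearing in the iterated long exact sequences so that the admissible vanishing range $1\le i\le d-1$ is invoked correctly at every stage, and in particular verify that the boundary terms $F_{\mathcal{T}}(\Sigma^{d-1}s_d)$ and $F_{\mathcal{T}}(\Sigma^{d-1}s_{d+1})$---which sit at the very top of that range---indeed vanish, so that $F_{\mathcal{T}}(\Sigma^{d-1}\eta_{d-1})$ really is injective. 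The telescoping in the first paragraph, by contrast, is a routine consequence of Proposition~\ref{thm_JT_45}.
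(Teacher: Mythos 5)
Your proof is correct, but it is worth saying up front that the paper gives no argument at all for this proposition: it is imported verbatim from \cite[Theorem 5.9]{JT}, with Remark \ref{rmk_gen_version} covering the passage from $\mathcal{T}=\mathrm{add}(T)$ to a general $\mathcal{T}$. So what you have written is a genuinely different route, namely a self-contained derivation inside the paper's own machinery, and I checked that it works. The telescoping is right: applying Proposition \ref{thm_JT_45} to each triangle $Y_{j+1}\rightarrow s_{j+1}\rightarrow Y_j\xrightarrow{\eta_j}\Sigma Y_{j+1}$ of the tower and taking the alternating sum leaves exactly $\sum_{j=0}^{d-1}(-1)^j\theta_{\mathcal{T}}([\im F_{\mathcal{T}}(\eta_j)])$. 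The key vanishing $F_{\mathcal{T}}(\Sigma^is)=0$ for $s\in\mathcal{S}$, $1\leq i\leq d-1$ holds as you say because $\mathcal{T}\subseteq\mathcal{S}$ and $\mathcal{S}$ is $d$-cluster tilting, and the downward induction on $k$ giving $F_{\mathcal{T}}(\Sigma^iY_k)=0$ for $1\leq i\leq k-1$ goes through; it is the $F_{\mathcal{T}}$-analogue of Lemma \ref{lemma_F_S_zero}, and the boundary vanishing $F_{\mathcal{T}}(\Sigma^{d-1}s_{d+1})=0$ that you flag as the delicate point is indeed available (it enters the induction at $k=d-1$), so only the $j=0$ term survives. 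For the identification of that term with $\theta_{\mathcal{T}}([\im F_{\mathcal{T}}(\gamma)])$: the long exact sequences show $F_{\mathcal{T}}(\Sigma^l\eta_l)$ is injective for $1\leq l\leq d-1$ and moreover surjective for $l\leq d-2$, while $F_{\mathcal{T}}(\eta_0)$ is surjective, so $\im F_{\mathcal{T}}(\gamma)=\im F_{\mathcal{T}}(\Sigma^{d-1}\eta_{d-1})\cong F_{\mathcal{T}}(\Sigma^{d-1}Y_{d-1})\cong F_{\mathcal{T}}(\Sigma Y_1)=\im F_{\mathcal{T}}(\eta_0)$, giving equal classes in $K_0(\mmod\mathcal{T})$. (One small correction of emphasis: injectivity of $F_{\mathcal{T}}(\Sigma^{d-1}\eta_{d-1})$ needs only $F_{\mathcal{T}}(\Sigma^{d-1}s_d)=0$; the term $F_{\mathcal{T}}(\Sigma^{d-1}s_{d+1})$ matters for the induction in the previous step, not here.) Comparing the two approaches: your derivation buys self-containedness, works directly in the paper's general setup where $\mathcal{T}$ need not have finitely many indecomposables, and closely parallels the technique the paper itself uses for $\mathcal{U}=\mathcal{S}$ in Section \ref{section_thm_S} (Lemma \ref{lemma_F_S_zero}, Remark \ref{remark_no_covers}, Proposition \ref{proposition_im_theta_S}), the genuinely new content being the vanishing statements forced by $\mathcal{T}$ being smaller than $\mathcal{S}$; what the citation buys is brevity and alignment with the conventions of \cite{JT} used later in the example computations.
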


\begin{exmp}
Let $p=3$ and $d=2$, so that $|Z|=p+2d+1=8$. For simplicity, we write the indecomposable $\{ x_0, x_1, x_2 \}$ as $x_0x_1x_2$. We have
\begin{align*}
    \rm{Ind }\mathcal{O}({A^2_3})=\{ 135, 136, 137, 146, 147, 157, 246, 247, 248, 257, 258, 268, 357, 358, 368, 468 \}.
\end{align*}
Moreover, the object $T=135\oplus 136\oplus 137\oplus 146\oplus 147\oplus 157\in\mathcal{O}({A^2_3})$ is such that its indecomposable direct summands are a maximal set of non-intertwining elements in $\mathcal{O}({A^2_3})$ of cardinality  $\begin{psmallmatrix}3+2-1\\2\end{psmallmatrix}=\begin{psmallmatrix}4\\2\end{psmallmatrix}=6$. So $\mathcal{T}=\text{add}(T)\subset \mathcal{O}({A^2_3})$ is Oppermann--Thomas cluster tilting.

Using some $4$-angles in $\mathcal{O}({A^2_3})$ obtained as described in Lemma \ref{lemma_angles_intert} and \cite[lemma 5.6]{JT}, we find the index of the indecomposables in $\mathcal{O}({A^2_3})$ with respect to $\mathcal{T}$, see Table \ref{table:table_1}. Brackets $[- ]$ for classes in $K_0^{sp}(\mathcal{T})$ are omitted both in the table and in the rest of this example.
\begin{table}[h]
\begin{center}
\begin{tabular}{ c|c } 
 $s\in \mathcal{O}({A^2_3})$ & index$_\mathcal{T}(s)$\\
 \hline
 $135$ & $135$\\ 
 $136$ & $136$\\
 $137$ & $137$\\
 $146$ & $146$\\
 $147$ & $147$\\
 $157$ & $157$\\
 $246$ & $146-136+135$\\
 $247$ & $147+135-137$\\
 $248$ & $135$\\
 $257$ & $157-137+136$\\
 $258$ & $136$\\
 $268$ & $137$\\
 $357$ & $157-147+146$\\
 $358$ & $146$\\
 $368$ & $147$\\
 $468$ & $157$\\
\end{tabular}
\end{center}
\caption{The index of objects of $\mathcal{O}({A^2_3})$ with respect to $\mathcal{T}$.}
\label{table:table_1}
\end{table}

Consider the endomorphism algebra $\Gamma:=\End_{\mathcal{O}({A^2_3})}(T)$. The indecomposable projective $\Gamma$-modules are $P_x:=\Hom_{\mathcal{O}(A^2_3)}(T,x)$, for $x\in\mathcal{T}$ indecomposable. The simple top of $P_x$ is then denoted by $S_x$.
We compute $\theta_{\mathcal{T}}([S])$ for every simple $\Gamma$-module $S$.
In order to do this, we choose some morphisms $\gamma$ in $\mathcal{T}$, extend them to $4$-angles in $\mathcal{O}({A^2_3})$ using Lemma \ref{lemma_angles_intert}, and compute $\theta_{\mathcal{T}}([\im F_\mathcal{T}(\gamma)])$ using Proposition \ref{prop_eg_PJ} and Table \ref{table:table_1}, see Table \ref{table:table_2}. Then, since $\theta_{\mathcal{T}}$ is additive, we can compute $\theta_{\mathcal{T}}$ at the simple $\Gamma$-modules using Table \ref{table:table_2}, see Table \ref{table:table_3}.
\begin{table}
\begin{center}
\begin{tabular}{ c|c|c } 
 $s_3\rightarrow s_2\rightarrow s_1\rightarrow s_0\xrightarrow{\gamma} \Sigma^2 s_3$ & $[\im F_\mathcal{T}(\gamma)]\in K_0(\mmod \Gamma)$ & $\theta_{\mathcal{T}}([\im F_\mathcal{T}(\gamma)])$\\
 
 \hline
 
 $247\rightarrow 257\rightarrow 357\rightarrow 135\xrightarrow{\gamma} 136$ & $[S_{135}]$ & $136-146$ \\ 
 
 $257\rightarrow 357\rightarrow 135\rightarrow 136\xrightarrow{\gamma} 146$ & $[S_{136}]$ & $-135+137+146-147$\\
 
 $258\rightarrow 358\rightarrow 135\rightarrow 137\xrightarrow{\gamma} 147$ & $[S_{136}]+[S_{137}]$ & $-135-136+137+146$\\
 
 $258\rightarrow 268\rightarrow 468\rightarrow 146\xrightarrow{\gamma} 147$ & $[S_{136}]+[S_{146}]$ & $-136+137+146-157$\\
 
 $268\rightarrow 468\rightarrow 146\rightarrow 147\xrightarrow{\gamma} 157$ & $[S_{137}]+[S_{147}]$ & $-137-146+147+157$\\
 
 $268\rightarrow 0\rightarrow 0\rightarrow 157\xrightarrow{\gamma} 157$ & $[S_{137}]+[S_{147}]+[S_{157}]$ & $-137+157$\\
\end{tabular}
\end{center}
\caption{Evaluation of $\theta_{\mathcal{T}}$ at some useful values.}
\label{table:table_2}
\end{table}

\begin{table}
\begin{center}
\begin{tabular}{c|c } 
 $[S_x]\in K_0(\mmod \Gamma)$ & $\theta_{\mathcal{T}}([S_x])$\\
 
 \hline
 
 $[S_{135}]$ & $136-146$ \\ 
 
 $[S_{136}]$ & $-135+137+146-147$\\
 
 $[S_{137}]$ & $-135+137+146-147$\\
 
 $[S_{146}]$ & $135-136+147-157$\\
 
 $[S_{147}]$ & $136-137-146+157$\\
 
 $[S_{157}]$ & $146-147$\\
\end{tabular}
\end{center}
\caption{Evaluation of $\theta_{\mathcal{T}}$ at the simple $\Gamma$-modules $S_x$.}
\label{table:table_3}
\end{table}

Note that $\langle\theta_{\mathcal{T}}([S])\mid S \text{ is a simple } \Gamma-\text{module}\rangle$ generates $\im\theta_{\mathcal{T}}$ since $K_0(\mmod \Gamma)$ is generated by the classes of the simple $\Gamma$-modules. 
Hence, using Table \ref{table:table_3}, in $K_0^{sp}(\mathcal{T})/\im\theta_{\mathcal{T}}$ we have that
\begin{align*}
    136=146=147\,\,\, \text{ and }\,\,\, 137=135=157.
\end{align*}
By Remark \ref{remark_im_theta_gen}, Theorem C and Corollary E, we conclude that
\begin{align*}
    K_0 (\mathcal{C}^{4}(A^2_3))\cong K_0(\mathcal{O}({A_3^2}))\cong  K_0^{sp}(\mathcal{T})/\im\theta_{\mathcal{T}}\cong \mathbb{Z}\oplus\mathbb{Z}.
\end{align*}
\end{exmp}

\end{document}